   \def\MR#1{}
\numberwithin{equation}{section}
\numberwithin{figure}{section}
\theoremstyle{plain}
\newtheorem{theorem}[equation]{Theorem}
\newtheorem{corollary}[equation]{Corollary}
\newtheorem{lemma}[equation]{Lemma}
\newtheorem{proposition}[equation]{Proposition}
\newtheorem*{namedtheorem}{\theoremname}
\newcommand{\theoremname}{testing}
\theoremstyle{definition}
\newtheorem{definition}[equation]{Definition}
\newtheorem{remark}[equation]{Remark}
\newtheorem{construction}[equation]{Construction}
\newcommand{\from}{\colon} 
\newcommand{\HH}{{\mathbb{H}}}
\newcommand{\RR}{{\mathbb{R}}}
\newcommand{\ZZ}{{\mathbb{Z}}}
\newcommand{\NN}{{\mathbb{N}}}
\newcommand{\CC}{{\mathbb{C}}}
\newcommand{\calD}{{\mathcal{D}}}
\newcommand{\calF}{{\mathcal{F}}}
\newcommand{\calM}{{\mathcal{M}}}
\newcommand{\calT}{{\mathcal{T}}}
\newcommand{\refthm}[1]{Theorem~\ref{Thm:#1}}
\newcommand{\reflem}[1]{Lemma~\ref{Lem:#1}}
\newcommand{\refprop}[1]{Proposition~\ref{Prop:#1}}
\newcommand{\refcor}[1]{Corollary~\ref{Cor:#1}}
\newcommand{\refdef}[1]{Definition~\ref{Def:#1}}
\newcommand{\refsec}[1]{Section~\ref{Sec:#1}}
\newcommand{\reffig}[1]{Figure~\ref{Fig:#1}}
\newcommand{\bdy}{\partial}
\newcommand{\PSL}{\operatorname{PSL}}
\newcommand{\area}{\operatorname{area}}
\newcommand{\len}{\operatorname{len}}
\newcommand{\interior}{{\mathrm{int}}}
\title{Constructing knots with specified geometric limits}
\author{Urs Fuchs}
\author{Jessica S.~Purcell}
\author{John Stewart}
\address{School of Mathematics,
Monash University,
VIC 3800, Australia}
\begin{document}

\begin{abstract}
It is known that any tame hyperbolic 3-manifold with infinite volume and a single end is the geometric limit of a sequence of finite volume hyperbolic knot complements. Purcell and Souto showed that if the original manifold embeds in the 3-sphere, then such knots can be taken to lie in the 3-sphere.
However, their proof was nonconstructive; no examples were produced. In this paper, we give a constructive proof in the geometrically finite case. That is, given a geometrically finite, tame hyperbolic 3-manifold with one end, we build an explicit family of knots whose complements converge to it geometrically. Our knots lie in the (topological) double of the original manifold. The construction generalises the class of fully augmented links to a Kleinian groups setting. 
\end{abstract}

\maketitle

\section{Introduction}\label{Sec:Intro}

In this paper, we construct finite volume hyperbolic 3-manifolds that converge geometrically to infinite volume ones. In 2010, Purcell and Souto proved that every tame infinite volume hyperbolic 3-manifold with a single end that embeds in $S^3$ is the geometric limit of complements of knots in $S^3$~\cite{PurcellSouto}. However, that was purely an existence result; the proof shed very little light on what the knots might look like. This paper is much more constructive. Starting with a tame, infinite volume hyperbolic 3-manifold $M$ with a single end, we give an algorithm to construct a sequence of knots that converge geometrically to $M$ --- with a cost. We can no longer ensure that our knot complements lie in $S^3$. 

The methods are to generalise the highly geometric fully augmented links in $S^3$ to lie on surfaces other than $S^2\subset S^3$. This will likely be of interest in its own right. Since their appearance in the appendix by Agol and Thurston in a paper of Lackenby~\cite{Lackenby:Vol}, fully augmented links have contributed a great deal to our understanding of the geometry of many knot and link  complements with diagrams that project to $S^2$. For example they have been used to bound volumes~\cite{FKP:DehnFilling} and cusp shapes~\cite{Purcell:CuspShapes}, give information on essential surfaces~\cite{BlairFuterTomova}, crosscap number~\cite{KalfagianniLee}, and short geodesics~\cite{Millichap}.

Such links on $S^2$ are amenable to study via hyperbolic geometry because their complements are hyperbolic and contain a pair of totally geodesic surfaces meeting at right angles: a projection surface, coloured white, and a disconnected \emph{shaded surface} consisting of many 3-punctured spheres; see~\cite{Purcell:AugLinksSurvey}. While essential 3-punctured spheres are geodesic in any hyperbolic 3-manifold, the white projection surface does not remain geodesic when generalising to links on surfaces other than $S^2$. However, using machinery from circle packings and Kleinian groups, we are able to construct links with a geometry similar to the projection surface.
We note other vey recent generalisations of fully augmented links to lie in thickened surfaces, due to Adams \emph{et al} \cite{adamsetal:2021Generalized}, Kwon~\cite{kwon2020fully}, and Kwon and Tham~\cite{kwon2020hyperbolicity}. We work within a different manifold, as follows.

Given a compact 3-manifold $\overline{M}$ with a single boundary component, the \emph{double of $\overline{M}$}, denoted $D(\overline{M})$ is the closed manifold obtained by gluing two copies of $\overline{M}$ by the identity along $\bdy \overline{M}$. 
The first main result of this paper is the following. 

\begin{theorem}\label{Thm:MainDouble}
Let $M$ be a geometrically finite hyperbolic 3-manifold of infinite volume that is homeomorphic to the interior of a compact manifold $\overline M$ with a single boundary component. Then there exists a sequence $M_n$ of finite volume hyperbolic manifolds that are knot complements in $D(\overline M)$, such that $M_n$ converges geometrically to $M$.

Moreover, the method is constructive: we construct for $p\in M$ and any $R>0$ and $\epsilon>0$ a fully augmented link complement $M_{\epsilon,R}$ in $D(\overline M)$ with a basepoint $p_{\epsilon,R}$ such that the metric ball $B(p_{\epsilon,R},R)\subset M_{\epsilon,R}$
is $(1+\epsilon)$-bilipschitz to the metric ball $B(p,R) \subset M$.
Performing sufficiently high Dehn filling along the crossing circles of the fully augmented link yields a knot complement, where the Dehn filling slopes can also be determined effectively, so that the resulting knot complement contains a metric ball that is $(1+\epsilon)^2$-bilipschitz to $B(p,R)$. 
\end{theorem}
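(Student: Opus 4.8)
\emph{Proof proposal.} Since $M$ is geometrically finite, I would begin by writing $M=\HH^3/\Gamma$ for a geometrically finite Kleinian group $\Gamma$ with limit set $\Lambda$ and domain of discontinuity $\Omega=\hat{\CC}\setminus\Lambda$; the single end then corresponds to the conformal boundary $\Sigma=\Omega/\Gamma$, a hyperbolic surface with $\Sigma\isom\bdy\overline M$. The plan is, given $p$, $R$ and $\epsilon$, to approximate $\Sigma$ by a circle packing, to convert that packing into a fully augmented link complement realised as a Kleinian manifold inside $D(\overline M)$, to prove a bilipschitz comparison of metric balls, and finally to Dehn fill the crossing circles to reach a knot. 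The sequence $M_n$ in the first assertion is then recovered by letting $\epsilon\to 0$ and $R\to\infty$.

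The first step is to fix a $\Gamma$-invariant circle packing $\calC$ of $\Omega$ with finitely many circles modulo $\Gamma$, chosen fine enough that its interstitial gaps are small and that every circle bounds a totally geodesic plane in $\HH^3$ lying outside the metric ball $B(p,R')$ for some large $R'\gg R$ depending on $\epsilon$. Passing to the quotient, $\calC$ descends to a packing of $\Sigma$ whose tangency (nerve) graph encodes the combinatorics of the augmented link. I would then turn this packing into geometry: the circles and their interstices determine a right-angled ideal polyhedral structure (by an Andreev-type realisation), equivalently a geometrically finite group $\Gamma_\calC$ obtained by adjoining to $\Gamma$ the reflections in the packing planes, discreteness and geometric finiteness being established by a Klein--Maskit combination or Poincar\'e fundamental-polyhedron argument. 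Its torsion-free orientation-preserving finite-index subgroup yields a manifold $N$ with totally geodesic boundary $\Sigma_\calC\isom\Sigma$, where the tangencies produce the crossing circles bounding the shaded $3$-punctured spheres. Doubling $N$ along $\Sigma_\calC$ gives the finite-volume fully augmented link complement $M_{\epsilon,R}\subset D(\overline M)$, symmetric under the doubling involution.

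The main work, and the step I expect to be the chief obstacle, is the geometric comparison: I would show that for $\calC$ sufficiently fine the ball $B(p,R)$ embeds in $N\subset M_{\epsilon,R}$ and that the natural comparison with $B(p,R)\subset M$ is $(1+\epsilon)$-bilipschitz. The difficulty is precisely the phenomenon flagged above --- unlike on $S^2$, the projection surface is \emph{not} totally geodesic here, so one cannot simply reflect across it. Instead one must argue that the packing planes and the geodesic boundary $\Sigma_\calC$ recede from $p$ as the packing refines, and quantify, via circle-packing approximation of the conformal structure on $\Sigma$ together with the geometric finiteness of $\Gamma$, that near $p$ the metric is dominated by $\Gamma$ and distorted by at most $(1+\epsilon)$. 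The hard part is making this control uniform and effective: simultaneously keeping the augmentation surfaces far from $B(p,R)$ and guaranteeing the packing approximates $\Sigma$ closely enough, with the required fineness expressed explicitly in terms of $\epsilon$ and $R$.

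Finally I would Dehn fill the crossing circles of $M_{\epsilon,R}$. Choosing the slopes large enough --- effectively, using the cusp geometry of the crossing-circle cusps --- Thurston's hyperbolic Dehn surgery theorem produces a hyperbolic manifold containing a ball that is $(1+\epsilon)$-bilipschitz to the corresponding ball of $M_{\epsilon,R}$, and hence $(1+\epsilon)^2$-bilipschitz to $B(p,R)\subset M$. Arranging the packing so that the components other than the crossing circles assemble into a single curve, and selecting the filling slopes accordingly, makes the filled manifold a knot complement in $D(\overline M)$, with slopes that can be read off effectively. This completes the constructive statement, and the limiting sequence $M_n$ follows at once.
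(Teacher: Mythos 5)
Your outline superficially tracks the paper's (circle packing $\to$ scooped/polyhedral structure $\to$ double $\to$ Dehn fill), but the step you start from is the one that fails. You ``fix a $\Gamma$-invariant circle packing $\calC$ of $\Omega$'' for the \emph{given} group $\Gamma$, ``chosen fine enough.'' No such packing exists in general: the circles must be projective circles (round circles in $\Omega\subset S^2$), and by Koebe--Andreev--Thurston rigidity a triangulation of a genus $g\geq 2$ surface is realised by a circle packing in essentially one conformal/projective structure, so you cannot prescribe both the nerve (hence the fineness) and the structure $\Omega(\Gamma)/\Gamma$. Your parenthetical ``Andreev-type realisation'' does not repair this, because realising a fine nerve by circles forces a change of the underlying structure, i.e.\ of the group itself. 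What is true is Brooks' theorem (\refthm{BrooksKleinian}): after an $e^{\epsilon}$-quasiconformal deformation of $\Gamma$, a partial packing can be completed to a genuine invariant packing. This changes where the analytic work lies. Since the packing lives on a deformed group $\Gamma_\epsilon=\phi\Gamma\phi^{-1}$, and since the scooped manifold embeds \emph{isometrically} in the resulting fully augmented link complement, the ball $B(p_{\epsilon,R},R)$ is carried isometrically; keeping the packing planes away from the ball is the easy part (\reflem{ScoopedContainingBall}). The genuine difficulty --- entirely absent from your proposal --- is controlling the distortion between $\HH^3/\Gamma$ and $\HH^3/\Gamma_\epsilon$, which the paper gets from McMullen's visual extension of a $K$-qc conjugacy to an equivariant $K^{3/2}$-bilipschitz diffeomorphism of $\HH^3$, plus a normal-families argument (\refcor{close}). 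So your ``main work'' paragraph is aimed at the wrong obstacle, and the actual source of the $(1+\epsilon)$ is never addressed.

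There are three further gaps. First, the gluing: there is no manifold $N$ with totally geodesic boundary $\Sigma_{\calC}\isom\Sigma$ to double along --- as you yourself note, the projection surface is not geodesic off of $S^2$, and this is exactly why a naive double fails. The paper instead glues two copies of the scooped manifold face-by-face: white faces (over packing circles) to their mirror images, and black ideal triangles (over \emph{dual} circles) to each other in pairs \emph{within} each copy. That pairing requires a dimer on the nerve (\reflem{AddCirclesDimer}); it is what puts four right angles around every edge so the structure is smooth and complete, and it is what decides which tangencies become crossing circles, the remaining tangencies chaining into the link strands on the surface --- so ``the tangencies produce the crossing circles'' is not correct as stated. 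Relatedly, reaching a single non-crossing-circle component is not achieved by ``arranging the packing'' but by inserting half-twists at crossing circles (\reflem{HalfTwistGluing}, \reflem{SingleComponent}). Second, you work directly with a geometrically finite $\Gamma$, but if $\Gamma$ has rank one cusps then $\Omega/\Gamma$ is punctured and not homeomorphic to $\bdy \overline M$, and the construction produces extra link components; the paper needs a separate reduction (\refthm{CCtoGF}) trading rank one cusps for rank two cusps via Klein--Maskit combination, followed by high Dehn filling. Third, the theorem promises \emph{effective} filling slopes, and Thurston's Dehn surgery theorem, which you invoke, is not effective; one needs the rectangular crossing-circle cusp shapes, the normalised length bound $\sqrt{c}$ (\reflem{NormLength}), and the effective bilipschitz filling theorem of Futer--Purcell--Schleimer as applied in \reflem{UnivCrossing}.
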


We prove \refthm{MainDouble} by first proving the theorem in the convex cocompact case. In \refsec{GFtoCC}, we extend the result to the geometrically finite case. 

The density theorem states that any hyperbolic 3-manifold $M$ with finitely generated fundamental group is the algebraic limit of a sequence of geometrically finite hyperbolic 3-manifolds; see Ohshika~\cite{ohshika2011realising} and Namazi and Souto~\cite{namazi2012nonrealizability}. Namazi and Souto proved a strong version of this theorem~\cite[Corollary~12.3]{namazi2012nonrealizability}: that in fact, the sequence can be chosen such that $M$ is also the geometric limit. Thus an immediate corollary of \refthm{MainDouble} is the following.

\begin{corollary}\label{Cor:Density}
   Let $M$ be a hyperbolic 3-manifold of infinite volume which is homeomorphic to the interior of a compact manifold $\overline M$ with a single boundary component. Then there exists a sequence $M_n$ of finite volume hyperbolic manifolds that are knot complements in $D(\overline M)$, such that $M_n$ converges geometrically to $M$.
\end{corollary}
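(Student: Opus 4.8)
The plan is to deduce Corollary~\ref{Cor:Density} by combining \refthm{MainDouble} with the strong density theorem of Namazi and Souto. The key observation is that \refthm{MainDouble} is stated only for \emph{geometrically finite} manifolds, whereas the corollary drops that hypothesis. Since geometric limits of geometric limits are geometric limits (under a suitable diagonal argument), I would first replace the given manifold $M$ by a sequence of geometrically finite approximants, apply the theorem to each, and then diagonalise.

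More precisely, I would proceed as follows. First, I would invoke \cite[Corollary~12.3]{namazi2012nonrealizability}: given the hyperbolic $3$-manifold $M$ of infinite volume homeomorphic to the interior of $\overline M$, there is a sequence $N_k$ of geometrically finite hyperbolic $3$-manifolds, each homeomorphic to the interior of the same $\overline M$, such that $N_k$ converges to $M$ both algebraically and geometrically. Geometric convergence means that for every $R>0$ and $\epsilon>0$, there is a $K$ so that for all $k\geq K$, a metric $R$-ball about a suitable basepoint in $N_k$ is $(1+\epsilon)$-bilipschitz to the corresponding $R$-ball in $M$. Second, to each geometrically finite $N_k$ I would apply \refthm{MainDouble}, which produces a sequence of finite volume hyperbolic knot complements in $D(\overline M)$ converging geometrically to $N_k$; note that $N_k$ and $M$ share the same compact model $\overline M$, so the ambient doubled manifold $D(\overline M)$ is the same for every $k$. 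Third, I would run a diagonal argument: for each $n$, choose $k=k(n)$ large enough that the $n$-ball about the basepoint in $N_k$ is $(1+1/n)$-bilipschitz to that in $M$, and then choose a knot complement $M_{n}$ from the sequence approximating $N_{k(n)}$ whose $n$-ball is $(1+1/n)$-bilipschitz to the corresponding ball in $N_{k(n)}$. Composing the two bilipschitz maps gives a $(1+1/n)^2$-bilipschitz map on the $n$-ball, and as $n\to\infty$ the radius grows without bound and the distortion tends to $1$, which is exactly geometric convergence $M_n\to M$.

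The one technical point requiring care is the matching of basepoints and of the balls across the two approximation steps. Geometric convergence is usually phrased in terms of framed basepoints (a point together with an orthonormal frame), and the bilipschitz maps coming from the two different sources need to be composable, meaning the image ball of the first map must contain the source ball of the second. I would handle this by fixing a basepoint $p\in M$ once and for all, transporting it to basepoints $p_k\in N_k$ via the near-isometries furnished by geometric convergence, and then using these $p_k$ as the basepoints $p$ in the statement of \refthm{MainDouble}. Because the theorem lets us specify the basepoint, the center of the ball and both radius and distortion freely, nothing obstructs the composition beyond keeping track of constants.

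The main obstacle I anticipate is purely bookkeeping rather than conceptual: ensuring that the two-stage bilipschitz estimate $(1+\epsilon)^2$ genuinely certifies \emph{geometric} convergence in the pointed Gromov--Hausdorff (equivalently, geometric topology) sense, and that all the knot complements indeed lie in the single fixed manifold $D(\overline M)$. Both follow from the fact that $M$, each $N_k$, and the compact model $\overline M$ are homeomorphic in the relevant sense, so the construction of \refthm{MainDouble} always takes place inside the same double. Given this, the corollary is a formal consequence, and the proof should be short.
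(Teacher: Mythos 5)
Your proposal is exactly the paper's intended argument: the paper states \refcor{Density} as an ``immediate corollary'' of \refthm{MainDouble} combined with the strong density theorem of Namazi--Souto \cite[Corollary~12.3]{namazi2012nonrealizability}, with the diagonal argument and basepoint bookkeeping you spell out left implicit. Your write-up is correct and, if anything, more careful than the paper's (which offers no written proof), so there is nothing to change.
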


\subsection{Acknowledgements}
This work was supported in part by grants from the Australian Research Council, particularly FT160100232, and DP210103136.

\section{Background}\label{Sec:Background}

In this section we review definitions and results that we will need for the construction, particularly terminology and results in Kleinian groups and their relation to hyperbolic 3-manifolds. Further details are contained, for example, in the books \cite{marden2007outer} and \cite{kapovich2001hyperbolic}. 
	
\subsection{Kleinian Groups}

Recall that the ideal boundary of $\mathbb H^3$ is homeomorphic to $S^2$, which can be viewed as the Riemann sphere, and that the group of isometries $\text{Isom}(\mathbb{H}^3)$ corresponds to the group of M\"obius transformations acting on the boundary. We mostly consider orientation preserving M\"obius transformations here, which may be viewed as elements in $\PSL(2,\CC)$.

A discrete subgroup of $\PSL(2,\CC)$ is called a \emph{Kleinian group}. 

\begin{definition}
A point $x \in S^2$ is a \emph{limit point} of a Kleinian group $\Gamma$ if there exists a point $y \in S^2$ such that $\lim_{n \to \infty} A_n(y) = x$ for an infinite sequence of distinct elements $A_n \in \Gamma$. The \emph{limit set} of $\Gamma$ is 
$\Lambda(\Gamma) = \{ x \in  S^2 \mid x \text{ is a limit point of $\Gamma$} \}.$

The \emph{domain of discontinuity} is the open set $\Omega(\Gamma) = S^2 \setminus \Lambda(\Gamma)$. 
This set is sometimes called the \emph{ordinary set} or \emph{regular set}. 
\end{definition}
	
A Kleinian group $\Gamma$ is often studied by its quotient space:
\[
\mathcal{M}(\Gamma) = (\mathbb{H}^3 \cup \Omega(\Gamma)) / \Gamma
\]
If $\Gamma$ is torsion-free, then $\mathcal{M}(\Gamma)$ is an oriented manifold with possibly empty boundary $\bdy \mathcal{M}(\Gamma)=\Omega(\Gamma) / \Gamma$. The interior $\interior(\mathcal{M}(\Gamma)) = \mathbb{H}^3 / \Gamma$ has a complete hyperbolic structure, since its universal cover is $\mathbb{H}^3$. The fundamental group of $\interior(\mathcal{M}(\Gamma))$ is isomorphic to $\Gamma$.
By Ahlfors' finiteness theorem~\cite{ahlfors1964finitely, ahlfors1965finitelycorrection}, if $\Gamma$ is a finitely generated torsion-free Kleinian group, then $\Omega(\Gamma) / \Gamma$ is the union of a finite number of compact Riemann surfaces with at most a finite number of points removed.
The boundary $\partial \mathcal{M}(\Gamma)=\Omega(\Gamma) / \Gamma$ endowed with this conformal structure is called the \emph{conformal boundary} of $\mathcal{M}(\Gamma)$. The Teichm\"uller space $\mathcal{T}(\partial \mathcal{M}(\Gamma))$ is the product the Teichm\"uller spaces $\mathcal{T}(S_i)$ where the $S_i$ form the components of $\partial \mathcal{M}(\Gamma)$.

In fact, the conformal boundary $\partial \mathcal{M}(\Gamma)$  has a \emph{projective structure}, since it is locally modelled on $(\widehat{\CC},\PSL(2,\CC))$. A (projective) \emph{circle} on $\partial \mathcal{M}(\Gamma)$ is a homotopically trivial, embedded $S^1\subset \partial \mathcal{M}(\Gamma)$ whose lifts to $\Omega(\Gamma)$ are circles on $S^2$.
	
\begin{definition} 
  Let $\Gamma$ be a Kleinian group and let $D$ be an open disk in $ \Omega(\Gamma)$ whose boundary is a circle $C$ on $S^2$. The circle $C$ determines a hyperbolic plane in $\mathbb H^3$. Denote by $H(D)\subset \mathbb{H}^3$ the closed half-space bounded by this plane that meets $D$. The \emph{convex hull} of $\Lambda$ is the relatively closed set
  \begin{equation*}
    CH(\Gamma) = \mathbb H^3 - \bigcup_{D \subset \Omega(\Gamma)} H(D).
  \end{equation*}
  The \emph{convex core of $\mathcal{M}(\Gamma)$} is the quotient 
  \begin{equation*}
    CC(\Gamma) = CH(\Gamma) / \Gamma \subset \interior\left (\mathcal{M}(\Gamma) \right).
  \end{equation*}
\end{definition}

\begin{definition}
A finitely generated Kleinian group $\Gamma$ for which the convex core $CC(\Gamma)$ has finite volume is called \emph{geometrically finite}.
  
If the action of $\Gamma$ on $CH(\Gamma)$ is cocompact, then $\Gamma$ is said to be \emph{convex cocompact}.

A hyperbolic 3-manifold is called \emph{geometrically finite} (resp. \emph{convex cocompact}), if  it is isometric to $\HH^3/\Gamma$ for a geometrically finite (resp.\ convex cocompact) $\Gamma$. 
\end{definition}

If $\Gamma$ is convex cocompact and torsion-free, then it follows that $\partial \mathcal{M}(\Gamma)$ is a (possibly disconnected) compact Riemann surface without punctures.

There are several equivalent definitions of a geometrically finite manifold in 3-dimensions; see Bowditch~\cite{Bowditch} for a discussion. For example, we will also use the following, which follows from the proof in~\cite{Bowditch} that GF5 is equivalent to GF3, in Section~4 of that paper. 

\begin{theorem}[Bowditch~\cite{Bowditch}]\label{Thm:Bowditch}
The torsion-free Kleinian group $\Gamma$ is geometrically finite if and only if there a finite sided fundamental domain $\calF(\Gamma)\subset\HH^3$ for the action of $\Gamma$ on $\HH^3$, with sides of $\calF(\Gamma)$ consisting of geodesic hyperplanes.
\end{theorem}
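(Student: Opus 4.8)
The plan is to prove both implications, taking ``geometrically finite'' in the sense fixed above, namely that the convex core $CC(\Gamma)=CH(\Gamma)/\Gamma$ has finite volume. Write $CH=CH(\Gamma)\subset\HH^3$, and assume $\Gamma$ is non-elementary, so that $CH$ has nonempty interior; the elementary cases admit obvious finite-sided domains and are checked directly.

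First I would show that a finite-sided fundamental domain forces $CC(\Gamma)$ to have finite volume. Given a fundamental domain $\calF=\calF(\Gamma)$ bounded by finitely many geodesic hyperplanes, the set $\calF\cap CH$ is a fundamental domain for the action of $\Gamma$ on $CH$, since $CH$ is $\Gamma$-invariant and the translates of $\calF$ tile $\HH^3$; hence $\vol(CC(\Gamma))=\vol(\calF\cap CH)$. The polyhedron $\calF$ has finitely many ideal vertices on $S^2$, which I would split into two types. An ideal vertex lying in $\Omega(\Gamma)$ is separated from $CH$ by a supporting half-space $H(D)$, so a funnel neighbourhood of it is disjoint from $CH$ and contributes nothing to $\calF\cap CH$. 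An ideal vertex lying in $\Lambda(\Gamma)$ is, by the classical analysis of ideal vertices of finite-sided fundamental polyhedra, the fixed point of a parabolic subgroup of $\Gamma$, and a neighbourhood of it in $\calF$ is a standard cusp region of finite volume. Since there are finitely many ideal vertices and the rest of $\calF\cap CH$ is compact, $\vol(\calF\cap CH)<\infty$.

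For the converse I would build the fundamental domain from the geometry of the convex core, rather than take a Dirichlet domain, which for a geometrically finite group need not be finite-sided. Here finiteness of volume forces the $\epsilon$-thick part of $CC(\Gamma)$ to be compact, while each noncompact piece of the thin part is a standard cusp neighbourhood of a rank-one or rank-two parabolic subgroup. I would assemble $\calF$ from pieces: over the compact thick part a ball of fixed radius meets only finitely many $\Gamma$-translates, so a Dirichlet-type cell there has finitely many geodesic faces; over each rank-two cusp I would take a chimney over a fundamental parallelogram for the $\ZZ^2$-action on a horosphere, bounded by finitely many vertical geodesic planes; and over each rank-one cusp I would take the region between two parallel geodesic planes, truncated using $\partial CH$. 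Gluing these finitely many pieces along their geodesic faces produces a fundamental domain for $\Gamma$ with finitely many geodesic sides, whose face-pairings recover $\Gamma$. This is the content of Bowditch's proof that GF5 is equivalent to GF3 in Section~4 of~\cite{Bowditch}.

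The main obstacle is this converse, and within it the rank-one cusps, whose cross-sections are noncompact Euclidean strips rather than tori. Their fundamental regions must be cut off compatibly with the faces coming from the compact thick part, and one must verify that after truncation only finitely many geodesic sides remain and that the resulting face-pairings still generate $\Gamma$. This matching is delicate and uses three-dimensionality in an essential way, so for it I would invoke Bowditch's argument directly rather than reconstruct the estimates.
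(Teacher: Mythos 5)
The paper gives no proof of this theorem: it is stated as background, attributed to Bowditch, with the remark that it follows from his proof in Section~4 of~\cite{Bowditch} that GF5 is equivalent to GF3. Your proposal defers the one substantive direction (finite-volume convex core implies a finite-sided polyhedron, including the rank-one cusp analysis) to exactly that argument of Bowditch, so it takes essentially the same approach as the paper; the surrounding sketch is a reasonable gloss and conflicts with nothing, though note in passing that the ideal boundary of a finite-sided polyhedron need not consist of finitely many ideal vertices (it can contain two-dimensional pieces of $S^2$, as for Fuchsian or Schottky groups), and that non-elementary does not imply $CH(\Gamma)$ has nonempty interior.
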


If $CC(\Gamma)$ is compact, it must also have finite volume, and so convex cocompact manifolds are geometrically finite. However, geometrically finite manifolds may also contain cusps.
Marden showed that a torsion-free Kleinian group $\Gamma$ is geometrically finite if and only if $\calM(\Gamma)$ is compact outside of horoball neighbourhoods of finitely many rank one and rank two cusps~\cite{marden1974geometry}. The rank one cusps correspond to pairs of punctures on $\bdy\calM(\Gamma)$. 
	
\subsection{The Quasiconformal Deformation Space} 
Consider a finitely generated, discrete subgroup $\overline{\Gamma}$ of $\text{Isom}(\mathbb{H}^3)$
such that the normal subgroup (of index at most two) $\Gamma :=\overline{\Gamma}\cap \PSL_2(\mathbb{C})$
is torsion-free.
A representation $\rho\from \overline{\Gamma}\rightarrow \text{Isom}(\mathbb{H}^3)$ is \emph{a quasiconformal deformation} of $\overline{\Gamma}$, if there is a (orientation-preserving) $K$--quasiconformal homeomorphism $h\from S^2\rightarrow S^2$ for some $K\geq 1$, such that we have
\[
\rho(\gamma)=h_*(\gamma):=h\circ \gamma\circ h^{-1}:S^2\rightarrow S^2 \quad \text{ for all }\gamma\in \overline{\Gamma}.
\]
(We shorten $K$--quasiconformal homeomorphism to $K$-qc homeomorphism below.)

\begin{definition}
The \emph{quasiconformal deformation space $QC(\overline{\Gamma})$} of $\overline{\Gamma}$ is defined as
\[
QC(\overline{\Gamma}):=\{\rho \mid \rho \text{ is a quasiconformal deformation of } \overline{\Gamma} \}/\PSL_2(\mathbb{C}).
\]
It can be endowed with a Teichm\"uller metric given by
\[
d_T([\rho],[\rho']):=\inf\{ \log K \mid \exists \phi \text{ $K$-qc homeomorphism with }\rho=\phi\circ \rho'\circ\phi^{-1}\}.
\]
We will always endow $QC(\overline{\Gamma})$ with the topology induced by this metric.
\end{definition}

Now let $\Gamma$ have index two in $\overline{\Gamma}$. Then the extension $\Gamma\subset \overline{\Gamma}$ amounts to an orientation-reversing isometric involution $\sigma$ on $\mathcal{M}(\Gamma)$, as follows. The space $\mathcal{M}(\overline{\Gamma})$ is a possibly non-orientable orbifold with boundary $\bdy\calM(\overline{\Gamma})$. The orbifold $\calM(\overline{\Gamma})$ can be recovered as $\mathcal{M}(\Gamma)/\sigma$. In particular, $\bdy\calM(\overline{\Gamma})$ is given by the quotient $\partial \mathcal{M}(\Gamma)/\sigma$. 
Conversely, the Riemann surface double of the Klein surface $\partial \mathcal{M}(\overline{\Gamma})$ yields $\partial \mathcal{M}(\Gamma)$ identified by $\sigma$.
Note that by passing to the Riemann surface double, we obtain a continuous map $j\from\mathcal{T}(\partial \mathcal{M}(\overline{\Gamma}))\rightarrow \mathcal{T}( \partial \mathcal{M}(\Gamma))$ and that restricting gives a natural inclusion map $QC(\overline{\Gamma})\rightarrow QC(\Gamma)$ for any $\Gamma\subset \overline{\Gamma}$.

The first part of the following theorem follows from work of Bers~\cite{BersKleinian}, Kra~\cite{KraKleinian} and Maskit~\cite{MaskitSelfmaps} when restricting to torsion-free Kleinian groups.

\begin{theorem}\label{Thm:AhlforsBers}
Let $\Gamma$ be a torsion-free finitely generated Kleinian group. Then there is a continuous map $\beta\from \calT(\bdy\calM(\Gamma))\to QC(\Gamma)$ given by associating to a marked conformal structure on $\partial\mathcal{M}(\Gamma)$ the  corresponding quasiconformal deformation of $\Gamma$.

Analogously, if $\overline{\Gamma}\subset \text{Isom}(\mathbb{H}^3)$ is such that $\Gamma=\overline{\Gamma}\cap \PSL_2(\mathbb{C})$, then the composition $\beta\circ j$ is a continuous map $\mathcal{T}(\partial \mathcal{M}(\overline{\Gamma}))\rightarrow QC(\overline{\Gamma})\subset QC(\Gamma)$.
\end{theorem}

\begin{proof}
We recall a proof of the first part given by Kapovich \cite[p.187]{kapovich2001hyperbolic} and then show that the second part follows by the same argument; compare also \cite[section~8.15]{kapovich2001hyperbolic}.

Consider elements in $\mathcal{T}(\partial \mathcal{M}(\Gamma))$ as  equivalence classes $[f\from X\to Y]$ of quasiconformal maps defined on the conformal boundary $X:=\partial \mathcal{M}(\Gamma)$ of the hyperbolic 3-manifold associated to $\Gamma$. 
Such a quasiconformal map $f$ induces a Beltrami differential $\mu$ on $X$, which lifts to a Beltrami differential $\mu'$ on $\Omega(\Gamma)$ that is invariant under the action of $\Gamma$. 
Extending $\mu'$ by $0$ yields a $\Gamma$-invariant Beltrami differential $\overline{\mu}$ defined globally on $S^2$. 
Solving the Beltrami equation for $\overline{\mu}$ yields a quasiconformal homeomorphism $h\from S^2\to S^2$ with $K(h)=K(f)$; it conjugates each $\gamma\in \Gamma$ to a Möbius transformation since $\gamma^*\overline{\mu}=\overline{\mu}$. Thus $h$ gives a representation $h_*\from \Gamma\to \text{Isom}(\HH^3)$ via $\gamma\mapsto h\circ \gamma\circ h^{-1}$. 
We set $\beta([f\from X\to Y])=h_*$. 
This map $\beta$ is well-defined, since equivalent marked Riemann surfaces yield the same conjugacy class of representations of $\Gamma$ by Sullivan's rigidity theorem. Moreover, it follows that $\beta$ is distance non-decreasing, since $K(h)=K(f)$ for any fixed marking surface $X$; in particular $\beta$ is continuous.

If now $\overline{\Gamma}\subset \text{Isom}(\mathbb{H}^3)$ is such that $\Gamma=\overline{\Gamma}\cap 
\PSL_2(\mathbb{C})$, then elements in $\mathcal{T}(\partial \mathcal{M}(\overline{\Gamma}))$ can be viewed as equivalence classes of equivariant quasiconformal maps $f\from (X,\sigma)\rightarrow  (Y,\sigma_Y)$ (defined on $(X,\sigma)$ associated to $\overline{\Gamma}$) up to equivariant isotopies. Such a map $f$ induces a $\sigma$-invariant Beltrami differential $\mu$ on $X$. 
As before, $\mu$ lifts and extends to $\overline{\mu}$ on $S^2$, which is now $\overline{\Gamma}$-invariant. If $\overline{h}$ solves the Beltrami-equation for $\overline{\mu}$, then it conjugates $\overline{\Gamma}$ to a representation $\overline{h}_*$ of $\overline{\Gamma}$; in other words, it yields a quasiconformal deformation $[\overline{h}_*]\in QC(\overline{\Gamma})$ of $\overline{\Gamma}$ which restricts to $[h_*]\in QC(\Gamma)$. Since the map $j$ obtained by forgetting the involutions is continuous, the claimed result follows.
\end{proof}

\subsection{Geometric convergence of 3-Manifolds}
	
In this section we will discuss what it means for hyperbolic 3-manifolds to converge geometrically. Background can be found in \cite{benedetti2012lectures, marden2007outer, canary2006notesonnotes, kapovich2001hyperbolic}.

Let $B_R(O)$ denote the hyperbolic ball of radius $R$ centered at an origin $O\in \mathbb{H}^3$.
Fix such an origin together with a frame in its tangent space (still simply denoted by $O$). Then hyperbolic manifolds with framed basepoints are in bijective correspondence with torsion-free Kleinian groups: A hyperbolic manifold with framed basepoint $(M,p)$ corresponds to the unique torsion-free Kleinian group $\Gamma$ such that there is an isometry $M\rightarrow \mathbb{H}^3/\Gamma$ taking the framed basepoint $p$ to the image of $O$ in $\mathbb{H}^3/\Gamma$. 
Under this correspondence a change of framed basepoint corresponds to conjugation of the Kleinian group. We denote the hyperbolic manifold with framed basepoint corresponding to $\Gamma$ by $(\HH^3/\Gamma,O)$.

\begin{definition}
For $i=1,2$ let $(N_i,p_i)=(\mathbb{H}^3/\Gamma_i,O)$ be two hyperbolic manifolds with framed basepoints. We say that $(N_2,p_2)$ is \emph{$(\varepsilon,R)$-close} to $(N_1,p_1)$, if there is a $(1+\varepsilon)$-bilipschitz embedding $\tilde{f}\from \mathbb H^3 \supset B_{R}(O)\to \mathbb{H}^3$ such that
\begin{itemize}
\item $\tilde{f}$ is $\varepsilon$-close in $C^0$ to the inclusion, that is $d_{C^0}(\tilde{f}, id_{\mathbb H^3}\vert_{B_R(O)})\leq \varepsilon$ and
\item $\tilde{f}$ descends to an embedding $f\from N_1\supset B_R(O)/\Gamma_1\rightarrow N_2$.
\end{itemize} 
\end{definition}
\begin{definition}\label{Def:geometric_conv}
A sequence of hyperbolic manifolds with framed basepoints $(M_k,p_k)$ is said to \emph{converge geometrically} to $(M,p)$, if for all $\varepsilon,R>0$, there is $k_0\in \NN$ such that for $k\geq k_0$ we have $(M_k,p_k)$ is $(\varepsilon,R)$-close to $(M,p)$.
Further, we say that a sequence of hyperbolic manifolds $M_k$ \emph{converges geometrically} to a hyperbolic manifold $M$ if for some (or equivalently\footnote{Note that if $(M_n,p_n)=(\HH^3/\Gamma_n,O)$ converges to $(M,p)=(\HH^3/\Gamma, O)$ and $p'$ is another framed basepoint on $M$ corresponding to the image of $O'$ in $\HH^3$, then $(\HH^3/\Gamma_n,O')$ converges to $(M,p')$.}, any)
framed basepoint $p$ on $M$ there are framed basepoints $p_k$ on $M_k$ such that $(M_k,p_k)$ converges geometrically to $(M,p)$. Also, a sequence of embeddings $f_k:M\rightarrow M_k$ \emph{establishes geometric convergence of $M_k$ to $M$}, if for any framed basepoint $p$ of $M$ and any $(\varepsilon,R)$ the (lifts of the) maps $f_k$ show that $(M_k,f_k(p))$ is $(\varepsilon,R)$-close to $(M,p)$ for $k$ sufficiently large.

\end{definition}

\begin{remark}
  A sequence of framed hyperbolic manifolds with framed basepoints $(M_k,p_k)$ converges geometrically to $(M,p)$, if and only if the corresponding torsion-free Kleinian groups $\Gamma_k$ converge to $\Gamma$ in the Chabauty topology.
  
Indeed, the proof of Theorem~E.1.14 in \cite{benedetti2012lectures} adapts to show that geometric convergence
of hyperbolic manifolds with framed basepoints in the sense of 
Definition~\ref{Def:geometric_conv} implies the convergence of the associated Kleinian groups, even though we do not assume $\tilde{f}(0)=0$ or convergence in $C^\infty$. On the other hand, geometric convergence of hyperbolic manifolds with framed basepoints in the sense of \cite[Section~E.1]{benedetti2012lectures} (or, by Theorem~E.1.14 in \cite{benedetti2012lectures}, Chabauty convergence of torsion-free Kleinian groups) implies geometric convergence in the sense of Definition~\ref{Def:geometric_conv}. 
\end{remark}

\subsection{Controlled equivariant extensions}

We say a quasiconformal homeomorphism $\phi\from S^2\to S^2$ conjugates a Kleinian group $\Gamma_1$ into a Kleinian group $\Gamma_2$ if the prescription $\gamma\mapsto \phi\circ \gamma\circ \phi^{-1}$ defines a group isomorphism $\phi\from \Gamma_1\to \Gamma_2$.

The following result is from McMullen~\cite[Corollary~B.23]{McMullenbookrenormal}.

\begin{theorem}[Visual extension of qc conjugation]\label{Thm:visual}
Suppose $\phi\from \partial \mathbb H^3 \to \partial \mathbb H^3 $ is a $K$-quasiconformal homeomorphism  conjugating $\Gamma_1$ into $\Gamma_2$. Then the map $\phi$ has an extension to an equivariant $K^{3/2}$-bilipschitz diffeomorphism $\Phi$ of $\mathbb H^3$. In particular the manifolds $\mathcal M (\Gamma_1)$ and $\mathcal M (\Gamma_2)$ are diffeomorphic.
\end{theorem}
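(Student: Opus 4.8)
The plan is to realise $\Phi$ as a \emph{conformally natural} (visual) extension of $\phi$, so that equivariance comes for free and only the control of the metric distortion requires genuine work. For each $x\in\HH^3$ let $\nu_x$ denote the visual probability measure on $\partial\HH^3=S^2$ seen from $x$ (equivalently, the harmonic/Poisson measure), and let $\phi_*\nu_x$ be its pushforward under $\phi$. I would define $\Phi(x)$ to be the conformal barycenter (Douady--Earle center of mass) of $\phi_*\nu_x$, i.e.\ the unique point of $\HH^3$ at which the conformal center of mass of $\phi_*\nu_x$ vanishes. Since the whole recipe is assembled from $\PSL(2,\CC)$-invariant data, it satisfies the naturality relation $\Phi_{A\circ\phi\circ B}=A\circ\Phi_{\phi}\circ B$ for all $A,B\in\PSL(2,\CC)$ acting as isometries of $\HH^3$, where $\Phi_\phi$ denotes the extension associated to $\phi$.

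Equivariance then follows formally. For $\gamma\in\Gamma_1$, write $\gamma'=\phi\circ\gamma\circ\phi^{-1}\in\Gamma_2$, so that $\phi\circ\gamma=\gamma'\circ\phi$ on $S^2$. Applying naturality to the two sides of this identity (precomposition by the isometry $\gamma$ on the one hand, postcomposition by the isometry $\gamma'$ on the other) gives $\Phi\circ\gamma=\gamma'\circ\Phi$ on $\HH^3$, which is exactly the asserted equivariance. Next I would show that $\Phi$ is a diffeomorphism: smoothness comes from the implicit function theorem applied to the barycenter equation, whose derivative in the $\HH^3$-variable is non-degenerate, while bijectivity is obtained by exhibiting $\Phi_{\phi^{-1}}$ as the natural candidate inverse and checking that $D\Phi$ is everywhere nonsingular, so that the continuous extension of $\Phi$ to $\phi$ on the sphere upgrades a local diffeomorphism to a global one.

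The quantitative bound is the heart of the matter, and the step I expect to be the main obstacle. Because the construction is natural, it suffices to bound the operator norms of $D\Phi_x$ and $(D\Phi_x)^{-1}$ at a single point: normalising by Möbius transformations on both sides, I may assume $x=O$ and $\Phi(O)=O$. Differentiating the barycenter condition then expresses $D\Phi_O$ as an average over $S^2$, against the Poisson kernel, of the infinitesimal distortion of $\phi$. Feeding in the $K$-quasiconformality of $\phi$ (bounded eccentricity of infinitesimal ellipses) and carrying out the resulting integral estimate produces the sharp exponent: both $\|D\Phi_O\|$ and $\|(D\Phi_O)^{-1}\|$ are bounded by $K^{3/2}$, so $\Phi$ is $K^{3/2}$-bilipschitz. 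This is exactly where the $3/2$ is generated, from the interplay between boundary conformal distortion and interior metric distortion in the visual average, and estimating this integral sharply rather than merely qualitatively is the delicate part.

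Finally, the equivariant $K^{3/2}$-bilipschitz diffeomorphism $\Phi$ descends to a diffeomorphism $\HH^3/\Gamma_1\to\HH^3/\Gamma_2$. Since $\phi$ conjugates $\Gamma_1$ into $\Gamma_2$ it carries $\Omega(\Gamma_1)$ to $\Omega(\Gamma_2)$ compatibly with the group actions, so gluing the interior diffeomorphism to the boundary identification induced by $\phi$ on $\Omega(\Gamma_1)/\Gamma_1\to\Omega(\Gamma_2)/\Gamma_2$ (smoothing near the boundary if necessary) yields the stated diffeomorphism $\calM(\Gamma_1)\isom\calM(\Gamma_2)$.
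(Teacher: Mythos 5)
Your construction is not the one the paper relies on, and the substitution breaks the proof at two essential points. The paper's $\Phi$ is McMullen's \emph{visual extension} (Corollary~B.23 of his book), built by integrating, along a Beltrami isotopy from the identity to $\phi$, the visual averages of the corresponding vector fields on $S^2$; you instead propose the Douady--Earle conformal barycenter of $\phi_*\nu_x$. The barycentric extension is conformally natural, so your equivariance argument is fine, but it is \emph{not} functorial: naturality only gives $\Phi_{A\circ\phi\circ B}=A\circ\Phi_\phi\circ B$ for M\"obius $A,B$, and there is no composition law for general quasiconformal maps. In particular $\Phi_{\phi^{-1}}$ is in general not $(\Phi_\phi)^{-1}$, so your "natural candidate inverse" is not an inverse, and with it goes both your proof of bijectivity and the two-sided (bilipschitz) bound. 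This step is exactly where the flow-based construction earns its keep: reversing the Beltrami isotopy inverts the flow, which is how the paper upgrades McMullen's conclusion ("equivariant Lipschitz map with differential bounded by $K^{3/2}$", applied to both $\phi$ and $\phi^{-1}$) to an equivariant $K^{3/2}$-bilipschitz diffeomorphism. Moreover, injectivity and nonvanishing of the Jacobian for the barycentric extension are genuinely delicate: the implicit function theorem gives smoothness (the Hessian in the target variable is positive definite), but nondegeneracy of the derivative in the \emph{source} variable does not follow, Douady--Earle's diffeomorphism proof is special to the disc, and the analogous statement for $S^2\to\HH^3$ at large $K$ is not established by anything you wrote.

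The quantitative claim is the other gap, and you correctly flag it as the heart of the matter, but the estimate you hope for is not available for your construction. McMullen's exponent $3/2$ is infinitesimal in origin: the visual averaging operator taking a Beltrami differential on $S^2$ to a strain field on $\HH^3$ has norm $3/2$, and integrating this along an isotopy of total dilatation $\log K$ gives $e^{(3/2)\log K}=K^{3/2}$. A single application of the barycentric recipe admits no such computation: $D\Phi_O$ is not an average of "the infinitesimal distortion of $\phi$" against the Poisson kernel ($\phi$ is only a.e.\ differentiable, and differentiating the barycenter condition produces derivatives of the visual densities, not of $\phi$), and no sharp $K^{3/2}$ bilipschitz bound for the Douady--Earle extension of a $K$-quasiconformal map of $S^2$ is known. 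Note finally that the paper does not prove this theorem from scratch: it quotes McMullen's result and only supplies the upgrade from "quasi-isometry" to "bilipschitz diffeomorphism" via smoothness of the flow and the reverse isotopy; if you want a self-contained treatment, that is the route to follow.
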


Strictly speaking, according to the conclusion of \cite[Corollary~B.23]{McMullenbookrenormal}, the map $\Phi$ is an equivariant ``$K^{3/2}$-quasi-isometry''. By~\cite[A.2~p.186]{McMullenbookrenormal}, this means that the extension $\Phi$ is an equivariant Lipschitz map whose differential is bounded by $K^{3/2}$. But $\Phi$ arises from the visual extension of the Beltrami Isotopy Theorem~B.22, which is obtained by integrating a smooth vector field Theorem~B.10; thus $\Phi$ is smooth. Since Corollary~B.23 also applies to the inverse map $\phi^{-1}$ and associates to it the map $\Phi^{-1}$ (by visually extending the reverse Beltrami isotopy), we can conclude that $\Phi$ is actually a $K^{3/2}$-bilipschitz diffeomorphism.

\begin{corollary}\label{Cor:close}
  Let $\varepsilon>0$ and $R>0$. There is $\delta>0$ such that if $\phi\from \partial \mathbb H^3 \to \partial \mathbb H^3 $ is a $(1+\delta)$-quasiconformal homeomorphism fixing 
$0,1,\infty$ and conjugating a torsion-free Kleinian group $\Gamma$ to $\Gamma_\phi$, then its visual extension $\Phi$ establishes that $(\mathbb H^3/\Gamma_\phi,p_\phi)$ is $(\varepsilon,R)$-close to $(\HH^3/\Gamma,p)$. Here both framed basepoints $p, p_\phi$ are induced by the framed basepoint $O$ in $\HH^3$.
\end{corollary}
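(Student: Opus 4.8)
\section{Proof proposal}

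The plan is to let $\tilde f$ be the restriction to $B_R(O)$ of the visual extension $\Phi$ supplied by \refthm{visual}, and to verify the three conditions in the definition of $(\varepsilon,R)$-closeness by taking $\delta$ small. Applying \refthm{visual} with $K=1+\delta$ shows that $\Phi$ is an equivariant $(1+\delta)^{3/2}$-bilipschitz diffeomorphism of $\HH^3$. Two of the conditions are then immediate. For the bilipschitz bound, it suffices to require $(1+\delta)^{3/2}\le 1+\varepsilon$, which makes $\Phi|_{B_R(O)}$ a $(1+\varepsilon)$-bilipschitz embedding. For the descent, equivariance gives $\Phi\circ\gamma=(\phi\gamma\phi^{-1})\circ\Phi$ for every $\gamma\in\Gamma$, with $\phi\gamma\phi^{-1}\in\Gamma_\phi$; hence $\Phi$ descends to a diffeomorphism $\HH^3/\Gamma\to\HH^3/\Gamma_\phi$ whose restriction to $B_R(O)/\Gamma$ is the required embedding $f$. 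Since both framed basepoints are induced by $O$, no further normalization is needed.

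The substance of the corollary is the remaining $C^0$ estimate $d_{C^0}(\Phi|_{B_R(O)},\id)\le\varepsilon$. I would establish it by contradiction, using a compactness argument in which the normalization at $0,1,\infty$ plays the decisive role. Suppose the statement fails for some fixed $\varepsilon_0,R_0>0$; then there are $\delta_n\to 0$, torsion-free Kleinian groups $\Gamma_n$, and $(1+\delta_n)$-qc homeomorphisms $\phi_n$ fixing $0,1,\infty$ and conjugating $\Gamma_n$ to $(\Gamma_n)_{\phi_n}$, whose visual extensions $\Phi_n$ fail the estimate on $B_{R_0}(O)$. The family of $K$-qc self-homeomorphisms of $S^2$ fixing three points is normal, so after passing to a subsequence the $\phi_n$ converge uniformly to a map that is $1$-quasiconformal, hence M\"obius; fixing $0,1,\infty$ it must be the identity. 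As every subsequential limit is the identity, $\phi_n\to\id$ uniformly on $S^2$.

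It remains to upgrade boundary convergence to convergence of the extensions on the compact ball $B_{R_0}(O)$, which I expect to be the technical heart of the proof. Here I would use that McMullen's visual extension is canonical in $\phi$ --- in particular independent of the group $\Gamma_n$, with equivariance automatic --- and that it depends continuously on $\phi$: it is produced by integrating the smooth vector field arising from the Beltrami isotopy of $\phi$ (Theorems~B.10 and~B.22 of \cite{McMullenbookrenormal}), so as the Beltrami coefficient of $\phi_n$ tends to $0$ the corresponding vector fields tend to $0$ locally uniformly and their integrated flows converge to the identity uniformly on compact subsets of $\HH^3$. Thus $\Phi_n\to\id$ uniformly on $B_{R_0}(O)$ while $(1+\delta_n)^{3/2}\to 1$, so for large $n$ the map $\Phi_n|_{B_{R_0}(O)}$ is simultaneously $(1+\varepsilon_0)$-bilipschitz, $\varepsilon_0$-close in $C^0$ to the inclusion, and descends to an embedding --- contradicting the choice of $\phi_n$ and completing the argument.
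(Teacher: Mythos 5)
Your proof is correct, and its first half (taking $\tilde f=\Phi|_{B_R(O)}$, getting the bilipschitz bound from $(1+\delta)^{3/2}\le 1+\varepsilon$, and getting descent from equivariance) coincides with the paper's; but your mechanism for the $C^0$ estimate is genuinely different. The paper never opens up McMullen's construction at this step: it uses the fact, extracted from the proofs of Theorems~B.21--B.22 of \cite{McMullenbookrenormal}, that $\Phi$ extends by reflection across $\partial\HH^3$ to a $K^{9/2}$-quasiconformal self-map of $S^3$ fixing $0,1,\infty$ on the equatorial sphere, and then applies the \emph{higher-dimensional} normal family theory of \cite{Gehring2017} directly to these reflected extensions: as $K\to 1$, every subsequential limit is $1$-quasiconformal, normalized, and reflection-symmetric, hence the identity, so the extensions themselves converge to the identity uniformly on compacta. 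You instead run the normal-family argument in dimension two, on the boundary maps $\phi_n$, and then transfer convergence to the interior via continuity of the visual extension in the Beltrami coefficient. That transfer step is precisely what the paper's reflection trick is designed to avoid, and it is the one place where your write-up is only a sketch: Theorems~B.10 and~B.22 do not state such a continuity property, so you would have to extract it from their proofs --- the velocity field of the Beltrami isotopy has spherical Zygmund norm $O(\|\mu\|_\infty)=O(\log K)$, and the visual extension operator is bounded on compact sets --- keeping in mind that the hyperbolic norm of the visual extension of a bounded spherical field grows with distance from the basepoint (extensions of conformal fields are Killing fields of $\HH^3$), so only \emph{locally} uniform convergence of the flows can hold, as you correctly state. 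Granting that, your argument closes; note also that it can be streamlined, since once the continuity claim is in place both the $S^2$ normal-family step and the contradiction framing become redundant: $\|\mu_n\|_\infty\le \delta_n/(2+\delta_n)\to 0$ already forces $\Phi_n\to\id$ on $B_{R_0}(O)$, yielding a direct proof with an effective displacement bound of the form $C(R)\log(1+\delta)$, which is arguably more in keeping with the paper's stated goal of effectivity than the paper's own compactness argument.
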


\begin{proof}
As seen in the proof of~\cite[Theorem~B.21,~B.22]{McMullenbookrenormal}, the visual extension $\Phi\from \HH^3\cup \partial \HH^3\rightarrow \HH^3\cup \partial \HH^3$ of a $K$-quasiconformal homeomorphism extends by reflection across $ \partial \HH^3$ further to a $K^{9/2}$-quasiconformal map $\overline{\Phi}\from S^3\rightarrow S^3$ fixing $0,1,\infty$ on the equatorial sphere $\partial \HH^3\subset S^3$.
  
Now in any dimension $n\geq 2$ and for any $L\geq 1$, the collection of $L$-quasiconformal homeomorphisms $S^n\rightarrow S^n$ fixing three specified points forms a normal family~\cite[Theorem~6.6.33]{Gehring2017}. If $L=1$, this consists only of the identity~\cite[Theorem~6.8.4]{Gehring2017}.

It follows that for $K=1+\delta$ close to $1$, the visual extension of a $K$-quasiconformal homeomorphism $\phi$ is a homeomorphism $\Phi$ of $\HH^3\cup \partial \HH^3$ that is $C^0$-close to the identity. In particular, given $R,\varepsilon>0$ there is $\delta>0$, such that the visual extension $\Phi$ of any $(1+\delta)$-quasiconformal homeomorphism $\phi$ fixing $0,1,\infty$ is $\varepsilon$-close to the identity on $B_R(O)\subset \HH^3$. Furthermore, the quasiconformal homeomorphism $\phi$ is $(\Gamma,\Gamma_\phi)$-equivariant by construction of $\Gamma_\phi$ and thus so is $\Phi$ by Theorem~\ref{Thm:visual}.
Combining these statements yields the desired result.
\end{proof}
\subsection{Circle Packings} 
	
In this section we will define circle packings and present a few important results relating to them. For more information see Stephenson~\cite{stephenson2005introduction}. We will eventually use circle packings to glue 3-manifolds and obtain our desired knot and link complements.

\begin{definition}\label{CirclePackingConformal}
Let $\Gamma$ be a torsion-free convex cocompact Kleinian group and recall that its conformal boundary $\partial \mathcal{M}(\Gamma)$ has a natural projective structure. Let $V$ be a triangulation of $\partial \mathcal{M}(\Gamma)$.

A \emph{circle packing} on $\partial \mathcal{M}(\Gamma)$ with nerve $V$ is 
a collection $P = \{c_v \vert\, v \text{ vertex of } V\}$ of (projective) circles on $\partial \mathcal{M}(\Gamma)$ bounding discs with disjoint interiors such that
\begin{enumerate}
\item each circle $c_v$ is centered $v$,
\item two circles $c_u, c_v$ are tangent if and only if $\langle u, v \rangle$ is an edge in $V$, and
\item three circles $c_u, c_v, c_w$ bound a positively oriented curvilinear triangle in $\partial \mathcal{M}(\Gamma)$ if and only if $\langle u, v, w \rangle$ form a positively oriented face of $V$.
\end{enumerate}
More generally, if $V$ is just a connected graph embedded in $\partial \mathcal{M}(\Gamma)$, we say that a collection of (projective) circles satisfying the first two conditions form a \emph{partial circle packing} with nerve $V$. 

Equivalently, we can consider locally finite, $\Gamma$-equivariant (partial) circle packings of $\Omega(\Gamma)$ obtained as lifts of (partial) circle packings on $\partial \mathcal{M}(\Gamma)$. 
\end{definition}

See \reffig{CirclePacking} for an example of a circle packing. 

\begin{figure}
  \centering
  \includegraphics{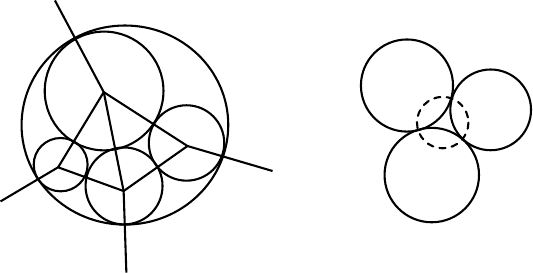}
  \caption{Left: Example of a circle packing with its nerve, the edges going out all meet at the vertex at $\infty$. Right: Three circles in a circle packing along with their dual circle, drawn with a dashed line.}
  \label{Fig:CirclePacking}
\end{figure}

\begin{definition}
Let $P$ be a circle packing with nerve $V$ and let $c_1,c_2,c_3 \in P$ be circles corresponding to a triangle in $V$. The curvilinear triangle bounded by these circle is called an \emph{interstice}.  There is a unique circle $c^{(1,2,3)}$ orthogonal to the circles $c_1,c_2,c_3$, intersecting them at their points of tangency. The collection of all such circles corresponding to each triangle in $V$ we will denote $P^*$ and we will call the \emph{dual (partial) circle packing} of $P$, see \reffig{CirclePacking}. Note that the nerves of $P$ and $P^*$ are duals as graphs on the surface.
\end{definition}

Work of Brooks~\cite{brooks1986circle} shows that convex cocompact hyperbolic 3-manifolds admitting a circle packing on the conformal boundary are abundant, in the following sense.

\begin{theorem}[Circle packings approximate]\label{Thm:BrooksKleinian}
Let $M=\HH^3/\Gamma$ be a convex cocompact hyperbolic 3-manifold.
Then, for every $\epsilon>0$, there is an $e^\epsilon$-quasiconformal homeomorphism $\phi$ fixing $0,1,\infty$, conjugating $\Gamma$ to $\Gamma_\epsilon$ such that the conformal boundary of $M_\epsilon=\HH^3/\Gamma_\epsilon$ admits a circle packing.

Moreover, the process is constructive: the proof constructs the circle packing. Additionally, for fixed $r>0$, we may ensure none of the circles in the circle packing and none of the triangular interstices have diameter larger than $r$. Here we identify $\partial \HH^3$ with the unit sphere in the tangent space $T_O\HH^3$ at the framed basepoint $O$ in $\HH^3$.
\end{theorem}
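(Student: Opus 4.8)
The plan is to recover this statement from Brooks's work~\cite{brooks1986circle}, while tracking the four features we need beyond mere existence: the dilatation bound $e^\epsilon$, the normalization fixing $0,1,\infty$, the constructivity, and the diameter bound $r$. First I would fix the setting. Since $\Gamma$ is convex cocompact and torsion-free, the conformal boundary $\partial\calM(\Gamma)=\Omega(\Gamma)/\Gamma$ is a closed Riemann surface (a finite union of closed surfaces), and its projective structure lifts to the $\Gamma$-invariant round-circle structure on $\Omega(\Gamma)\subset S^2$. What we seek is a locally finite $\Gamma$-equivariant packing of $\Omega(\Gamma)$ by round disks whose tangency nerve descends to a triangulation of the quotient; the obstruction is that the given projective structure coming from $\Gamma$ need not admit any such packing.

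Next I would recall Brooks's mechanism. Lay down a finite $\Gamma$-equivariant family of mutually tangent round circles, so that the complement of the disks in $\Omega(\Gamma)$ is a $\Gamma$-invariant union of curvilinear \emph{interstices}. An interstice bounded by three mutually tangent circles can be filled by a canonical chain of inscribed tangent circles (a continued-fraction, or Ford-circle, process); Brooks assigns to each interstice a real parameter recording this chain, and the chain terminates, closing up to leave only genuine triangular interstices, exactly when the parameter is rational. Working modulo $\Gamma$ there are only finitely many interstices, so rationalizing the parameters is a finite set of real conditions. The point is that these parameters vary continuously as one moves in the finite-dimensional Teichm\"uller space of $\partial\calM(\Gamma)$, and by \refthm{AhlforsBers} motion in this space is realized by quasiconformal deformations of $\Gamma$ within $QC(\rho_0)$. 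I would therefore perturb slightly in Teichm\"uller space to drive the finitely many parameters to rational values, producing a nearby group $\Gamma_\epsilon$ whose conformal boundary carries an honest circle packing; equivariance is automatic from working in the deformation space.

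To obtain the quantitative refinements I would argue as follows. For the dilatation bound, the adjustment needed to rationalize the parameters can be made arbitrarily small (refining the initial pattern only shrinks it), so the associated quasiconformal deformation has dilatation at most $e^\epsilon$; normalizing by a M\"obius post-composition sending a chosen triple to $0,1,\infty$ is conformal and leaves the dilatation unchanged, giving $\phi$ fixing $0,1,\infty$. For the diameter bound, compactness of $\partial\calM(\Gamma)$ means a sufficiently fine initial equivariant family has all circles of spherical diameter (in the round metric on $\partial\HH^3$ seen from $O$) below any prescribed threshold; the inscribed circles filling the interstices are smaller still, and a deformation of dilatation near $1$ distorts diameters by a bounded factor, so choosing the initial threshold below $r$ with margin guarantees that all circles and triangular interstices of the final packing have diameter at most $r$. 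Constructivity is inherited directly from Brooks's proof, which explicitly produces the initial circles, the interstitial chains, the parameters, and the deforming map.

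The main obstacle is the interaction between the last two requirements: forcing small circles demands a fine initial pattern and hence many interstices, each contributing a parameter that must be rationalized, and one must verify that the single deformation closing up all of them simultaneously still stays within the one bound $e^\epsilon$ while remaining $\Gamma$-equivariant. The key input, supplied by Brooks's estimates, is that the per-interstice correction and the dimension of the relevant deformation are controlled so that refinement shrinks, rather than inflates, the total deformation required; combined with the automatic equivariance from working inside the quasiconformal deformation space of \refthm{AhlforsBers}, this is what makes the simultaneous control of $e^\epsilon$ and $r$ possible.
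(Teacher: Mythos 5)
Your outline follows the same general route as the paper (Brooks's argument plus quantitative bookkeeping), but the central mechanism is missing, and without it the perturbation step as you state it does not work. You propose to view the interstice parameters as continuous functions on the Teichm\"uller space of $\partial\calM(\Gamma)$ and to perturb $\Gamma$ inside $QC(\rho_0)$ until they become rational. Two problems: a quasiconformal deformation of $\Gamma$ does not carry round circles to round circles, so along such a deformation the initial partial packing ceases to be a packing by circles and your parameters are not even defined; and even if they were, continuity of finitely many real-valued functions does not let you make them all rational by an arbitrarily small perturbation --- you need the parameter map to be open, in fact a homeomorphism. Brooks, and the paper, solve both problems at once by enlarging the group: let $\Gamma'$ be generated by $\Gamma$ together with the \emph{reflections} in all circles of the partial packing. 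Deformations of $\Gamma'$ automatically keep every packing circle round, and by \refthm{AhlforsBers} the quasiconformal deformation space of $\Gamma'$ is identified with the Teichm\"uller space of the complementary interstices. The paper first refines the partial packing so that every interstice is a triangle or a quad: triangles are conformally rigid, and each quad contributes a factor $\RR$ via Brooks's parameter $q$, with a finite full packing of the quad existing exactly when $q$ is rational. (Note that you attach the continued-fraction parameter to interstices bounded by three mutually tangent circles; this is backwards --- triangular interstices are rigid and are the desired end state, and it is the quads that carry the rationality condition.) Density of the rational points in the finite product of $\RR$'s then yields the $e^\epsilon$ bound regardless of how many quads there are; no estimate about ``refinement shrinking the deformation'' is needed or used.

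The diameter clause is the other place where the paper does real work that your sketch compresses into ``compactness''. An equivariant packing contains all $\Gamma$-translates of the circles placed in a fundamental region, and a M\"obius transformation can expand a tiny circle to one of nearly full spherical diameter; so fineness inside the fundamental region does not by itself bound the diameters of all circles of the equivariant family as seen from $O$. The paper's fix is a finiteness argument: cover the compact fundamental region $F$ by finitely many round disks $D_i$ of radius $r_0\le r/2$; since round disks have diameter controlled by area, only finitely many translates $\gamma(D_i)$ have diameter exceeding $r_0$, hence only finitely many translates $F_1,\dots,F_k$ of $F$ are ``expanding''; then choose $r_1\le r_0$ so small that every disk of radius $\le r_1$ meeting $F$ lies in some $D_i$ and all of its translates meeting $F_1,\dots,F_k$ still have diameter $\le r_0$. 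Relatedly, your claim that a deformation of dilatation near $1$ ``distorts diameters by a bounded factor'' is false as stated, since M\"obius maps are $1$-quasiconformal and distort diameters arbitrarily; the correct statement, which the paper uses, is that $K$-quasiconformal maps fixing $0,1,\infty$ form a normal family whose only $K=1$ limit is the identity, so for $\epsilon$ small the normalized deformation is uniformly $C^0$-close to the identity and the diameter bound survives.
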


This is essentially contained in Brooks' proof of~\cite[Theorem~2]{brooks1986circle}, but the statement of the theorem is different in Brooks' paper. In particular, there was no consideration of diameters there, and no worry about construction. We work through the proof below, highlighting the diameters and the constructive nature of the proof. 
\begin{proof}[Proof of \refthm{BrooksKleinian}]
We begin by choosing effective constants controlling the diameters of the circles and interstices, using a compactness argument. We may uniformise each closed surface component of $\Omega(\Gamma)/\Gamma$ by a component of $\Omega(\Gamma)$. Because $\Gamma$ is convex cocompact, hence geometrically finite, its action has a finite-sided fundamental domain $F$ by \refthm{Bowditch}, giving a finite-sided fundamental region for the action of $\Gamma$ on $\Omega(\Gamma)$. The fundamental region will have boundary consisting of vertices and edges, and will be compact.

We need to choose the circles to have bounded radii when seen from $O$ in $\HH^3$. To do so, it is convenient to look at hyperbolic space in the Poincar\'e ball model $\mathbb{B}^3$ with $O$ at the origin. Then circles of radius $r$ in the unit sphere of $T_O \HH^3$ correspond to circles of radius in $r$ the boundary of the Poincar\'e ball $\partial \mathbb{B}^3$.

For given $r>0$, pick a small $r_0\leq r/2$ such that any disk $D$ of radius $r_0$ meeting $F$ intersects, apart from $F$,
at most the immediate neighbouring fundamental domains to $F$ in $\Omega(\Gamma)$. Since $F$ is compact, it can be covered by finitely many open discs $D_i$ of radius $r_0$.
All translates of these $D_i$ are round disks; therefore the diameter of each translate $\gamma(D_i)$ is bounded in terms of their area. This implies that there are only finitely many translates $\gamma(D_i)$ whose diameter is larger than $r_0$. Indeed, otherwise there would be an infinite disjoint collection of such translates of diameter larger than $r_0$, but this is impossible since the area of $S^2$ is finite.
It follows that there are only finitely many translates $F_1,\ldots , F_k$ of $F$ that meet a translate $\gamma(D_i)$ whose diameter is larger than $r_0$.

Therefore we can pick $r_1\leq r_0$ such that for any disk $D$ of radius at most $r_1$ meeting $F$ the following holds: $D$ is contained in one of the $D_i$, and any translate of $D$ meeting $F_1, \ldots , F_k$ has diameter at most $r_0$. Note also that translates of $D$ \emph{not} meeting $F_1,\ldots,F_k$ automatically have diameter at most $r_0$ by construction.

Now pack $F$ with circles of radius at most $r_1$ by the following constructive process, similar to that of~\cite[Lemma~2.3]{HoffmanPurcell}.
First choose disjoint circles centred at vertices of $F$, taking their images under $\Gamma$ to ensure equivariance. Then take circles centred along edges, again ensuring translates under $\Gamma$ agree. Finally, take circles of radius at most $r_1$ with centres in the interior of the region. Extend this partial circle packing of $F$ to $\Omega(\Gamma)$ using the action of $\Gamma$, ensuring an equivariant packing.

This yields a $\Gamma$-equivariant partial circle packing of $\Omega(\Gamma)$ consisting of circles of diameter at most $r_0$ and with regions complementary to the circles consisting of polygonal interstices, with circular arcs as boundaries. At this point, additional circles of radius at most $r_1$ may be added to $F$; we add sufficiently many to obtain interstitial regions that are either triangles or quads of diameter at most $r_1$; see Brooks~\cite{Brooks:Schottky} or a more detailed exposition in Stewart \cite[Lemma~3.7]{Stewart:Thesis}. Finally, extend again $\Gamma$-equivariantly to obtain an equivariant partial packing of $\Omega(\Gamma)$ with circles of diameter at most $r_0$, all of whose interstitial regions are triangles or quads of diameter at most $r_0\leq r/2$.

Consider the group $\overline{\Gamma}$ generated by $\Gamma$ and all reflections across the circles in the packing. By \refthm{AhlforsBers}, the Teichm\"uller space of the complementary regions, which here are triangles and quads, maps continuously to the quasiconformal deformation space of $\overline{\Gamma}$ with its Teichm\"uller metric.
The triangular interstitial regions are conformally rigid. The quads have a Teichm\"uller space  homeomorphic to $\RR$.

Brooks shows in \cite{Brooks:Schottky} that there is an explicit homeomorphism $q$ from the Teichm\"uller space of a quad to $\mathbb{R}$ with the property that there is a full packing of a quad by finitely many circles if and only if $q(Q)$ is rational. 
Thus, arbitarily close to any quad $Q$ in the Teichm\"uller space of quads, there is another quad $Q'$ with $q(Q')$ rational.
Applying this simultaneously to all the quads complementary to the packing, we obtain arbitrarily close configurations where $q(Q')$ is rational for all quads $Q'$. We may uniquely pack circles into this quad. 

By \refthm{AhlforsBers}, for any $\epsilon>0$, we can thus quasiconformally deform the associated representation $\rho\from \overline{\Gamma}\to  \text{Isom}(\mathbb{H}^3)$ by an $e^\epsilon$-quasiconformal homeomorphism $h$, normalized to fix the points $0,1,\infty$, to obtain a new convex cocompact representation with image $\overline{\Gamma}_\epsilon$, whose complementary quads are all rational. See \reffig{CircleDeformation}.
\begin{figure}
  \centering
  \includegraphics{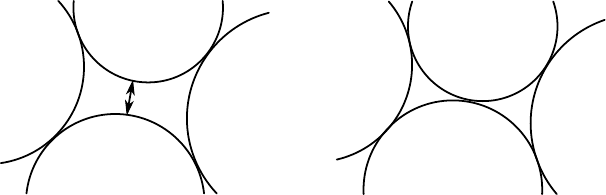}
  \caption{Once enough circles are added so a quad is sufficiently small, the space is deformed so that opposite circles become tangent. Doing this for every quad gives a circle packing.}
  \label{Fig:CircleDeformation}
\end{figure}

We need to ensure that the quasiconformal homeomorphism does not enlarge the diameters of circles and interstitial regions too much. Indeed, for any $K\geq 1$, the $K$-quasiconformal homeomorphisms of $S^2$ fixing $0,1,\infty$ form a normal family. Because we fix $0,1,\infty$, this normal family consists of only the identity map when $K=1$. Thus any sequence of $K_i$-quasiconformal homeomorphisms of $S^2$ fixing $0,1, \infty$ with $K_i\rightarrow 1$ converges to the identity map on $S^2$; compare the proof of Corollary~\ref{Cor:close}. 

Thus, while the $e^\epsilon$-quasiconformal deformation may enlarge some of the radii of the circles, provided $\epsilon$ is small enough, the resulting circles and interstitial regions will have diameter at most $r$.
\end{proof}

\begin{definition}
  Let $V$ be a graph. A \emph{dimer} on $V$ is a colouring of edges such that each face is adjacent to exactly one coloured edge.
\end{definition}

\begin{lemma}\label{Lem:AddCirclesDimer}
Let $\Gamma$ be a torsion-free convex cocompact Kleinian group and let $P$ be a $\Gamma$-equivariant circle packing of $\Omega(\Gamma)$ with nerve $V$.

 Then there exists a circle packing $\overline{P}$ with nerve $\overline{V}$ such that $V \subset \overline{V}$ and $\overline{V}$ admits a dimer.
Further, the maximal diameter of circles and interstitial regions of $\overline{P}$ in $\Omega(\Gamma)$ does not exceed that of $P$.
\end{lemma}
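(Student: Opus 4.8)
The plan is to refine $P$ by a single $\Gamma$-equivariant \emph{stellar subdivision}: into every triangular interstice of $P$ I would inscribe one new circle, and then read off the dimer by colouring precisely the edges of the original nerve $V$. Since $P$ is a full circle packing, $V$ is a triangulation of $\Omega(\Gamma)$ — every face $\langle u,v,w\rangle$ is a triple of mutually tangent circles $c_u,c_v,c_w$ bounding a curvilinear triangular interstice. Classically this interstice contains a unique inscribed circle tangent to all three (the inner Soddy circle), whose disk lies inside the interstice and hence inside $\Omega(\Gamma)$. Adding this circle in every interstice, carried along by the $\Gamma$-action on the ($\Gamma$-invariant) family of interstices, produces a $\Gamma$-equivariant collection $\overline P\supset P$. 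Its nerve $\overline V$ is the stellar subdivision of $V$: each triangle $\langle u,v,w\rangle$ is replaced by $\langle c,u,v\rangle,\langle c,v,w\rangle,\langle c,w,u\rangle$, where $c$ is the new central vertex. In particular $V\subset\overline V$, since every old vertex and old edge survives, each old edge now bordering two of the new triangles. One checks directly that $\overline P$ is again a genuine circle packing: the new circle is tangent to its three neighbours, so each new face is a triple of mutually tangent circles and each complementary region is once more a triangular interstice.

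For the dimer I would colour exactly those edges of $\overline V$ that are edges of $V$. Every face of $\overline V$ has the form $\langle c,u,v\rangle$ with $c$ a central vertex and $\langle u,v\rangle$ an old edge, so its three sides are the two new edges $cu,cv$ and the single old edge $uv$. Hence each face is adjacent to exactly one coloured edge, which is precisely the definition of a dimer. (Equivalently, this is the perfect matching of the cubic dual graph of $\overline V$ pairing the two new triangles that share each old edge; on the quotient $\partial\mathcal M(\Gamma)$ the identity $2E=3F$ for a triangulation makes the count balance exactly.) This colouring is manifestly $\Gamma$-equivariant because the set of old edges is.

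Finally, for the diameter bound: each new circle lies inside the interstice into which it is inscribed, so its diameter is at most that interstice's diameter, hence at most the maximal diameter occurring in $P$; and each interstitial region of $\overline P$ is contained in an interstitial region of $P$, so its diameter likewise does not exceed that of $P$. Thus the maximal diameter of circles and interstitial regions of $\overline P$ does not exceed that of $P$, as required. The routine local points are the existence and uniqueness of the inscribed circle and its containment in $\Omega(\Gamma)$, and the verification that the old edges hit each new face exactly once. The genuine crux, however, is the \emph{choice} of subdivision: the stellar subdivision is exactly the one whose old edges form a dimer while only shrinking circles and interstices, so that the required diameter control comes for free rather than needing to be fought for. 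Any coarser or less symmetric refinement would force one to balance dimer-existence (an evenness/matching condition on the dual graph) against diameter growth, which is the difficulty this construction sidesteps.
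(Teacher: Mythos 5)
Your proposal is correct and is essentially identical to the paper's proof: both add the unique inscribed circle to each triangular interstice (the stellar subdivision of the nerve), colour the old edges of $V$ to obtain the dimer, and note that equivariance and the diameter bound follow because new circles and new interstices lie inside old interstices. The paper's argument is just a more compressed version of yours.
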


\begin{proof}
We define the circle packing $\overline{P}$ by adding the unique circle to each triangular interstice in $P$ which is tangent to all three circles. The effect on the nerve is to add a vertex to the interior of each triangle of $V$, and connect by three edges to the existing vertices of $V$, subdividing each triangle into three triangles to form $\overline{V}$. Then each triangle in $\overline{V}$ has exactly one edge coming from $V$. Colour this edge. This gives a dimer on $\overline{V}$. Observe that because the action of $\Gamma$ takes triangular interstices to triangular interstices, the result is still equivariant with respect to $\Gamma$.
Observe that the diameter of circles and interstitial regions at most decrease with this procedure.
\end{proof}

In general there are multiple ways to add circles to a circle packing so that the result admits a dimer. The strength of the above its that it works for any starting circle packing and is simple to execute.
	
\section{Construction}\label{Sec:Construction}

In this section, we construct the links of the main theorem.

\subsection{Scooped Manifolds}
				
\begin{definition}
Let $M = \mathbb H^3 / \Gamma$ be a convex cocompact hyperbolic 3-manifold.
Further assume that $\bdy\calM(\Gamma)=\Omega(\Gamma)/\Gamma$ admits a circle packing $P$ with dual packing $P^*$; then on $\Omega(\Gamma)$ there is a corresponding equivariant circle packing $\widetilde{P}$ with dual packing $\widetilde{P}^*$.
For the circles $c_i$ in $\widetilde{P}$ on $\Omega(\Gamma)$, there are pairwise disjoint associated open half spaces $H(c_i) \subset \HH^3$ which meet the conformal boundary $\bdy\HH^3$ at the interior of $c_i$.
We then define the \emph{scooped manifold} $M_P$ to be the manifold formed by removing the half spaces associated with circles in $\widetilde{P}$ and its dual $\widetilde{P}^*$, and taking the quotient under $\Gamma$:
\begin{equation*}
  M_P = \HH^3 - \bigcup_{c \in \widetilde{P},\widetilde{P}^*} H(c) /\Gamma.
\end{equation*}
\end{definition}
	
The boundary of $M_P$ consists of hyperbolic ideal polygons whose faces come from $\partial H(c), c \in P$ and $\partial H(c^*), c^* \in P^*$, and edges come from the intersection of $\partial H(c)$ and $\partial H(c^*)$. Note $M_P$ is a manifold with corners whose interior is homeomorphic to $M$.

\begin{lemma}\label{Lem:ScoopedContainingBall}
Let $M = \HH^3/\Gamma$ be a convex cocompact hyperbolic 3-manifold and $O\in \HH^3$.
Then for any $\epsilon>0$, there exists a $e^\epsilon$-quasiconformal homeomorphism $\phi$ fixing $0,1,\infty$ conjugating $\Gamma$ to $\Gamma_\epsilon$ 
satisfying the following:
\begin{itemize}
\item The associated convex cocompact manifold $M_\epsilon = \HH^3/\Gamma_\epsilon$ admits a circle packing $P$ on its conformal boundary.
\item The metric ball $B(O,R)/\Gamma_\epsilon \subset M_\epsilon$
is completely contained in the corresponding scooped manifold $(M_\epsilon)_P$.
\item Further, we can extend $P$ to a circle packing $\overline{P}$ that admits a dimer as in \reflem{AddCirclesDimer}, so that $B(O,R)/\Gamma_\epsilon$ is still completely contained in the scooped manifold $(M_\epsilon)_{\overline{P}}$.
\end{itemize}
\end{lemma}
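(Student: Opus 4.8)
The plan is to reduce the whole statement to a single diameter estimate and then quote \refthm{BrooksKleinian} and \reflem{AddCirclesDimer}. Work in the Poincar\'e ball model $\mathbb{B}^3$ with $O$ at the origin, so that $\partial\HH^3$ is the unit sphere in $T_O\HH^3$ and a circle $c$ of small Euclidean diameter bounds a geodesic half-space far from $O$. Concretely, a circle of Euclidean radius $s$ bounds a geodesic plane whose nearest point to $O$ lies at Euclidean distance $\sqrt{(1-s)/(1+s)}$ from the origin, i.e.\ at hyperbolic distance $\log\frac{1+\sqrt{(1-s)/(1+s)}}{1-\sqrt{(1-s)/(1+s)}}$ from $O$; this is a decreasing function of $s$ tending to $\infty$ as $s\to 0$. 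Hence there is $r_0=r_0(R)>0$ such that whenever a circle $c$ has Euclidean diameter at most $r_0$, the half-space $H(c)$ meeting the small disk bounded by $c$ is disjoint from $B(O,R)$.

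First I would apply \refthm{BrooksKleinian} with the given $\epsilon$ and with a diameter bound $r\le r_0$ to be fixed below. This produces an $e^\epsilon$-qc homeomorphism $\phi$ fixing $0,1,\infty$, a conjugate group $\Gamma_\epsilon$, and a circle packing $P$ on $\partial\calM(\Gamma_\epsilon)$ whose equivariant lift $\widetilde P$ on $\Omega(\Gamma_\epsilon)$ has the property that \emph{every} circle and \emph{every} triangular interstice has Euclidean diameter at most $r$, uniformly over the whole equivariant family (this uniformity is exactly what the proof of that theorem delivers). In particular the first bullet already holds, and every packing circle $c\in\widetilde P$ satisfies $H(c)\cap B(O,R)=\emptyset$ as soon as $r\le r_0$.

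It remains to control the dual circles $\widetilde P^*$, which are not covered directly by \refthm{BrooksKleinian}. Each dual circle passes through the three tangency points of a triangular interstice and meets the three packing circles orthogonally; those three points are the corners of the interstice and so lie in a set of diameter at most $r$. A direct computation --- equivalently a compactness argument using that any three mutually tangent circles are M\"obius-equivalent to a fixed symmetric configuration, to which a small interstice is carried by a near-similarity --- shows that the diameter of the dual circle is bounded by a universal constant $C$ times the diameter of its interstice, hence by $Cr$. Choosing $r\le r_0/\max(1,C)$ then gives $H(c)\cap B(O,R)=\emptyset$ for every $c\in\widetilde P\cup\widetilde P^*$. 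Since $\bigcup_{c\in\widetilde P,\widetilde P^*}H(c)$ is $\Gamma_\epsilon$-invariant and disjoint from $B(O,R)$, the ball embeds into the scooped region, yielding $B(O,R)/\Gamma_\epsilon\subset (M_\epsilon)_P$, which is the second bullet.

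For the third bullet I would apply \reflem{AddCirclesDimer} to extend $P$ to a packing $\overline P$ admitting a dimer. That lemma guarantees the maximal diameter of circles and interstitial regions does not increase, so every circle and every triangular interstice of $\overline P$ still has diameter at most $r$, whence every dual circle of $\overline P$ has diameter at most $Cr$ by the same estimate. The distance bound then applies verbatim to $\widetilde{\overline P}$ and $\widetilde{\overline P}^{\,*}$, giving $B(O,R)/\Gamma_\epsilon\subset (M_\epsilon)_{\overline P}$. I expect the dual-circle estimate to be the main obstacle: \refthm{BrooksKleinian} bounds the packing circles and the triangular interstices but says nothing about the orthogonal dual circles, and ruling out a thin interstice whose circumscribing dual circle is large is precisely where the orthogonality of the dual circle to the three tangent circles must be exploited.
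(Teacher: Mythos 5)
Your proof is correct and follows essentially the same route as the paper: invoke \refthm{BrooksKleinian} with the diameter bound $r$ chosen small relative to $R$ (the paper phrases this as first building a partial packing whose half-spaces stay distance $2R$ from $O$ and then deforming, but the content is the same), conclude that all half-spaces miss $B(O,R)$, and finish with \reflem{AddCirclesDimer}, whose diameter non-increase clause handles the third bullet exactly as you say. The one place you diverge is instructive: the paper dispatches the dual circles with a bare assertion (``Note that if $H(c) \cap B(O,R) = \varnothing$ for each $c \in P$ then we also have that $H(c) \cap B(O,R) = \varnothing$ for $c \in P^*$''), whereas you correctly identify this as the step that needs an argument, since a circle through three nearby tangency points could a priori be enormous. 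Your claimed bound $\operatorname{diam}(c^*)\le C\cdot\operatorname{diam}(\text{interstice})$ is in fact true, but your compactness/near-similarity sketch is the weakest link; a clean way to get it is to note that each of the three arcs of $c^*$ lying inside a packing disk $D_i$ is a hyperbolic geodesic of $D_i$ (precisely because $c^*\perp c_i$) joining two corners of the interstice, and a geodesic arc of a round disk has Euclidean diameter at most a universal constant times the distance between its endpoints. Alternatively, since \refthm{BrooksKleinian} bounds the circles as well as the interstices by $r$, you can avoid the constant $C$ altogether: the dual disk satisfies $D^* \subset D_1 \cup D_2 \cup D_3 \cup \overline{I}$, a union of four mutually touching sets each of diameter at most $r$, so $\operatorname{diam}(D^*)\le 4r$, and the half-space over a disk of diameter $4r$ misses $B(O,R)$ once $r$ is small.
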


\begin{proof}
The construction of \refthm{BrooksKleinian} yields an $e^\epsilon$-quasiconformal homeomorphism fixing $0,1,\infty$, and giving $M_\epsilon$ with circle packing $P$ on its conformal boundary, where circles and triangular interstices have diameter at most $r$. For $r>0$ sufficiently small, we may ensure that the half-spaces $H(c)$ defined by the circles of $P$ and its dual $P^*$ have distance at least $2R$ from $O$ in $\HH^3$. 
Thus we have $B(O,R)/\Gamma_\epsilon \subset M_\epsilon - \cup_{c \in P} H(c)  = (M_\epsilon)_P$. 
  
Finally, using \reflem{AddCirclesDimer}, we can extend $P$ to a circle packing $\widetilde{P}$ which admits a dimer.
\end{proof}

\begin{proposition}\label{Prop:BoundaryMP}
Let $M$ be a convex cocompact hyperbolic 3-manifold. Further suppose that $\bdy \calM(\Gamma)$ admits a circle packing $P$ with nerve $K$ that has a fixed dimer. Then the scooped manifold $M_P$ has the following properties:
\begin{enumerate}
\item The faces on the boundary of $M_P$ can be checkerboard coloured, white and black.
\item The white faces consist of totally geodesic ideal polygons.
\item The black faces consist of totally geodesic ideal triangles. The dimer induces a pairing of the black faces, such that paired black faces share an ideal vertex.
\item The ideal vertices are all four valent.
\item The dihedral angle between faces on the boundary is $\pi /2$. 
\end{enumerate}
\end{proposition}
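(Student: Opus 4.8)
The plan is to work $\Gamma$-equivariantly upstairs in $\mathbb{H}^3$ with the lifted packings $\widetilde{P}$ and $\widetilde{P}^*$ on $\Omega(\Gamma)$, translate the combinatorics of the nerve $K$ into the geometry of the bounding planes $\partial H(c)$, and then pass to the quotient $M_P$. The only tools needed are the standard dictionary between circles on $S^2$ and totally geodesic planes in $\mathbb{H}^3$: a circle spans a totally geodesic plane; two externally tangent circles span planes meeting only at their common ideal point; and two orthogonal circles span planes meeting at dihedral angle $\pi/2$. I would first fix the candidate colouring of (1): colour a face \emph{white} if it lies on $\partial H(c)$ for some $c\in\widetilde{P}$, and \emph{black} if it lies on $\partial H(c^*)$ for some $c^*\in\widetilde{P}^*$. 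Under this assignment the white faces are in bijection with the vertices of $K$ and the black faces with the triangles of $K$.

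Properties (2), (3), and (5) then follow face by face. For (2), a packing circle $c_v$ spans a totally geodesic plane, trimmed precisely by the half-spaces $H(c^*)$ whose dual circle $c^*$ is orthogonal to $c_v$, namely the duals of the triangles of $K$ incident to $v$; each such orthogonal $c^*$ contributes a single geodesic edge, and consecutive edges meet at the ideal points where $c_v$ is tangent to its neighbours, so the white face is a totally geodesic ideal polygon. For (3), a dual circle $c^{(u,v,w)}$ meets the packing circles only in $c_u,c_v,c_w$, doing so orthogonally and through their three pairwise tangency points, so its plane is cut by exactly three perpendicular planes into a totally geodesic ideal triangle; the pairing is then purely combinatorial, since the black faces correspond to triangles of $K$ and the coloured edges of the dimer form a perfect matching on these triangles (each triangle meets exactly one coloured edge and each coloured edge lies in two triangles). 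For (5), every edge of $\partial M_P$ is by construction an intersection $\partial H(c)\cap\partial H(c^*)$ with $c^*$ orthogonal to $c$, so the two planes are perpendicular and the dihedral angle is $\pi/2$.

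It remains to justify (1) and (4), and this is where the real geometric content sits; both reduce to the local picture at an ideal vertex, which I claim is exactly a tangency point $p_{uv}$ of the packing, i.e.\ an edge $\langle u,v\rangle$ of $K$. Through $p_{uv}$ pass the two packing circles $c_u,c_v$ and the two dual circles $c^{(u,v,w)},c^{(u,v,x)}$ of the two triangles of $K$ sharing $\langle u,v\rangle$. The main obstacle is to verify that these two dual circles are \emph{tangent} at $p_{uv}$ rather than crossing transversally: any circle orthogonal to both $c_u$ and $c_v$ passing through $p_{uv}$ has tangent line there perpendicular to the common tangent of $c_u$ and $c_v$, so the two dual circles share that tangent direction. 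Together with the fact that $c_u$ and $c_v$ themselves meet only at $p_{uv}$, this shows that two white faces, and likewise two black faces, can meet only at an ideal vertex and never along an edge, so the colouring of (1) is a genuine checkerboard colouring. It also shows that exactly four faces (two white, two black, alternating) emanate from $p_{uv}$, giving the four-valence of (4). Once this tangency lemma is pinned down, all five statements follow from the circle--plane dictionary and descend to $M_P$ by $\Gamma$-equivariance.
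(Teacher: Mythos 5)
Your proposal is correct and follows essentially the same route as the paper's proof: the same white/black colouring by $P$ versus $P^*$, orthogonality of a circle and its dual giving the $\pi/2$ dihedral angles, the dual circle through the three tangency points giving black ideal triangles, and the dimer inducing a perfect matching on black faces (you phrase this directly on the triangles of $K$, the paper via the dual graph $K^*$, which is the same argument). The one place you go beyond the paper is the explicit verification that the two dual circles at a tangency point $p_{uv}$ are tangent there rather than crossing --- the paper simply asserts that four circles meet at each ideal vertex and only makes the rectangle structure explicit later (in its Lemma on horoball neighbourhoods of the ideal vertices); your tangency argument is a worthwhile sharpening, not a different approach.
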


\begin{proof}
By the definition of scooped manifolds the boundary of $M_P$ consists of ideal geodesic polygons coming from the boundaries of the half spaces associated with circles in $P$ and $P^*$. The geodesic polygons coming from half spaces associated with circles in $P$ we colour white, while those coming from $P^*$ we colour black. Observe that the points of tangency of circles in $P$ and $P^*$ are the same, so these points of tangency form the ideal vertices of both the black and white faces. If $c \in P$ and $c^* \in P^*$ are circles such that $c \cap c^* \neq \varnothing$ then $c$ and $c^*$ intersect in exactly two points $u$ and $v$; these points of intersection correspond to ideal points on the boundary of $M_P$. There is an edge between $u$ and $v$ on $\partial M_P$ formed by $H(c) \cap H(c^*)$. This edge lies between the face corresponding to $H(c)$ which we have coloured white and $H(c^*)$ which we have coloured black. Since every edge on $\partial M_P$ occurs in this manner, we know that every edge lies between a black and white face. Thus we know that the colouring of the faces we have assigned gives a checkerboard colouring of the faces. The fact that ideal vertices are 4-valent follows from the fact that at each ideal vertex there are four circles which meet at this point: two from $P$ and two from $P^*$. Finally, since circles in $P$ and $P^*$ meet orthogonally, the dihedral angle at each edge must be $\pi/2$. 
		
To see that the black faces are triangles, observe that for every circle $c^* \in P^*$ we have by definition that $c^*$ meets exactly three points in $P$. These points are the ideal vertices on the black faces corresponding to the half space associated with $H(c^*)$.

\begin{figure}
  \centering
  \includegraphics{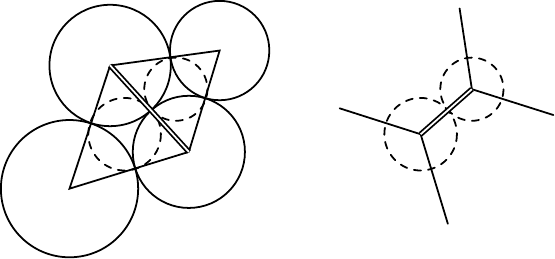}
  \caption{Left shows four circles in $P$, with two dashed circles in $P^*$. Part of the nerve of $P$ is shown on the left with the coloured edge from the dimer drawn with two lines. On the right we have the same two circles in $P^*$ along with the colouring of the associated part of the nerve of $P^*$.}
  \label{Fig:PairedBlackTri}
\end{figure}
		
Now we show how the black faces are paired. Let $K$ be the nerve of $P$, which has a dimer. Then in the dual graph $K^*$ of $K$, we can transfer the colouring of edges in $K$ to a colouring of edges in $K^*$, since edges are sent to edges. Note that $K^*$ is 3-valent since $K$ only consists of triangles. Since each face in $K$ is adjacent to exactly one coloured edge in the dimer, each vertex in $K^*$ is adjacent to exactly one coloured edge. This gives a pairing on the vertices in $K^*$ along this edge, which gives a paring of the circles in $P^*$. Thus each black face in $\partial M_P$ is paired to another black face. See \reffig{PairedBlackTri}.
\end{proof}

\begin{lemma}\label{Lem:RectangleVertices}
Let $M=\HH^3/\Gamma$ be a convex cocompact hyperbolic 3-manifold and suppose that $\bdy \calM(\Gamma)$ admits a circle packing $P$.  For each ideal vertex $v_i \in \{v_1, \dots, v_n\}$ of the scooped manifold $\bdy M_P$, there is a horoball neighbourhood $H_i$ such that the $H_i$ are pairwise disjoint, and $\bdy H_i \cap M_P$ is a Euclidean rectangle.
\end{lemma}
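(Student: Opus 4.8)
My plan is to reduce to the local picture at a single ideal vertex in the universal cover, identify the horospherical cross-section there explicitly, and then obtain the disjointness from a finiteness argument. First I would fix an ideal vertex $v_i$ of $\bdy M_P$ and choose a lift $\tilde v_i\in\Omega(\Gamma)\subset\bdy\HH^3$, which is a point of tangency of the equivariant packings $\widetilde P$ and $\widetilde P^*$. By \refprop{BoundaryMP}, exactly four circles pass through $\tilde v_i$: two circles of $\widetilde P$, externally tangent to one another at $\tilde v_i$, and two circles of $\widetilde P^*$, tangent to one another at $\tilde v_i$, with every $\widetilde P$-circle orthogonal to every $\widetilde P^*$-circle. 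I would then normalise by an element of $\PSL(2,\CC)$ carrying $\tilde v_i$ to $\infty$ and work in the upper half-space model.

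The heart of the argument is the resulting Euclidean picture. Since all four circles pass through the point sent to $\infty$, their images are straight lines in $\CC=\bdy\HH^3$, and the bounding hyperplanes $\bdy H(c)$ become vertical Euclidean half-planes. Tangency of the two $\widetilde P$-circles at $\tilde v_i$ makes their images a pair of parallel lines, and likewise the two $\widetilde P^*$-circles give a second parallel pair; the orthogonality of the two families (equivalently, the $\pi/2$ dihedral angles of \refprop{BoundaryMP}) forces these two pairs to be mutually perpendicular. Because the two $\widetilde P$-circles are \emph{externally} tangent, the two removed half-spaces lie on opposite sides, so near $\tilde v_i$ the scooped region is exactly the rectangular chimney cut out by the four vertical planes. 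A horoball based at $\infty$, that is $\{(z,t): t\geq t_0\}$, then meets this chimney so that its bounding horosphere $\{t=t_0\}$ --- which carries the flat metric $ds^2=|dz|^2/t_0^2$ --- is sliced by the four vertical planes into a Euclidean rectangle; this rectangle is $\bdy H_i\cap M_P$. I expect this identification to be the main content: one needs parallelism (from the tangencies), perpendicularity (from the $\pi/2$ angles), and four-valence together to conclude the section is a genuine rectangle rather than an arbitrary quadrilateral.

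Finally I would address disjointness. Because the circle packing on the compact surface $\bdy\calM(\Gamma)$ is finite, $\bdy M_P$ has only finitely many ideal vertices $v_1,\dots,v_n$, and raising $t_0$ shrinks the horoball while preserving the rectangular section, so each $v_i$ has a neighbourhood basis of such horoball neighbourhoods. Choosing the horoballs $\Gamma$-equivariantly in $\HH^3$ and taking all the parameters $t_0$ large enough, the quotient neighbourhoods become pairwise disjoint and embedded. The one technical point to handle with care is that the $\Gamma$-orbit of tangency points accumulates on $\Lambda(\Gamma)$; this I would control using that $\Gamma$ is torsion-free and acts freely and properly discontinuously on $\Omega(\Gamma)$, so that disjointness need only be arranged among the finitely many orbit representatives before spreading it equivariantly.
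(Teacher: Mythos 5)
Your proposal is correct and follows essentially the same route as the paper's proof: lift a vertex to $\Omega(\Gamma)$, apply a M\"obius transformation sending it to $\infty$ so the two tangent circles of $\widetilde P$ and the two of $\widetilde P^*$ become perpendicular pairs of parallel lines bounding a rectangular chimney, observe that a horosphere at $\infty$ meets this in a Euclidean rectangle, and use finiteness of the vertex set to shrink the horoballs until they are pairwise disjoint. Your extra remarks on external tangency and on arranging disjointness equivariantly among orbit representatives are just slightly more detailed versions of steps the paper treats implicitly.
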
	

\begin{proof}
Let $\{v_1, \dots , v_n\}$ be the collection of ideal vertices on $\partial M_P$. Note that there are two circles in $P$ and two circles in $P^*$ which meet tangentially at each $v_i$. Let $\widetilde{M_P}$ denote a lift of $M_P$ into $\mathbb H^3$ under a covering map, and $\widetilde{v_i}$ a single point in the corresponding lift of $v_i$ to $\partial \mathbb H^3$. Two circles of $P$ and two of $P^*$ lift to be tangent to $\widetilde{v_i}$. Let $\varphi$ denote a M\"obius transformation taking $\widetilde{v_i}$ to $\infty$. It takes the circles projecting to $P$ to a pair of parallel lines, and those projecting to $P^*$ to another pair of parallel lines meeting the first two orthogonally, hence forming a Euclidean rectangle. Then any horoball $H_h$ of height $h$ centred at $\infty$ in $\HH^3$ meets $\varphi(\widetilde{M_P})$ in $R_i\times (h,\infty)$, where $R_i$ is a Euclidean rectangle. This projects to a rectangular horoball neighbourhood of $v_i$. Finally, because there are only finitely many ideal vertices of $M_P$, we may choose the horoball about each vertex so that all horoballs are pairwise disjoint, as desired.
\end{proof}

\begin{lemma}\label{Lem:FiniteVolume}
Let $M=\HH^3/\Gamma$ be a convex cocompact hyperbolic 3-manifold and suppose that $\bdy \calM(\Gamma)$ admits a circle packing $P$. Then the scooped manifold $M_P$ has finite volume.
\end{lemma}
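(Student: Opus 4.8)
The plan is to show that the scooped manifold $M_P$ has finite volume by exhibiting it as a finite union of geodesic pieces of finite volume, together with finitely many cusp neighbourhoods. The key structural fact is that $M_P$ is compact away from horoball neighbourhoods of its finitely many ideal vertices, and each such cusp neighbourhood has finite volume. I would exploit the fact that $\Gamma$ is convex cocompact, so by \refthm{Bowditch} it has a finite-sided fundamental domain, and the circle packing $P$ on $\bdy\calM(\Gamma)$ descends from a $\Gamma$-equivariant packing $\widetilde P$ of $\Omega(\Gamma)$ with only finitely many orbits of circles.

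\textbf{Step 1: Reduce to a compact core plus cusps.} First I would observe that, by \reflem{RectangleVertices}, each of the finitely many ideal vertices $v_1,\dots,v_n$ of $M_P$ admits a pairwise disjoint horoball neighbourhood $H_i$ whose intersection with $M_P$ is $R_i\times(h,\infty)$ for a Euclidean rectangle $R_i$. The volume of such a rectangular cusp is finite: in the upper half-space model, $\vol(R_i\times(h,\infty)) = \area(R_i)\cdot\int_h^\infty \frac{dt}{t^3}\cdot(\text{const}) < \infty$, which is the standard computation that a horoball cross a bounded Euclidean region has finite volume. So the total volume contributed by the cusps is finite.

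\textbf{Step 2: Show the complement of the cusps is compact.} The remaining piece $M_P \setminus \bigcup_i \interior(H_i)$ is what I expect to be the main obstacle, and the heart of the argument. I would argue that $M_P$ is obtained from the convex cocompact manifold $M = \Int(\calM(\Gamma))$ by removing the quotients of the half-spaces $H(c)$ for $c\in\widetilde P\cup\widetilde P^*$; since $\Gamma$ is convex cocompact, $CC(\Gamma)$ is compact, and the scooping only removes regions reaching out toward the conformal boundary. The truncated piece is a closed subset of $\calM(\Gamma)$ cut out by finitely many orbits of geodesic hyperplanes, all of whose noncompactness is concentrated at the ideal vertices. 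Thus after removing the open horoball neighbourhoods from Step 1, the resulting set is closed and bounded in the complete manifold $\Int(\calM(\Gamma))$, hence compact. Concretely, I would pass to the universal cover, intersect a fundamental domain $\calF(\Gamma)$ with the scooped-out region and the complements of the lifted horoballs, and check that this intersection is a compact (finite-sided) polyhedron, using that the packing has finitely many orbits of circles and interstices of bounded diameter.

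\textbf{Step 3: Conclude.} Since $\vol(M_P)$ is the sum of the volume of the compact part (finite, being a continuous function on a compact set) and the volumes of the finitely many finite-volume cusp neighbourhoods from Step 1, we conclude $\vol(M_P) < \infty$. The main subtlety I anticipate is verifying rigorously that the truncated region is genuinely compact, i.e.\ that the only ends of $M_P$ are the rectangular cusps at the ideal vertices and no noncompactness leaks out through the scooped faces; this rests on the finiteness of the fundamental domain guaranteed by convex cocompactness together with \refthm{Bowditch}, and on the finiteness of the packing orbits.
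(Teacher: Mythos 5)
Your proposal is correct and follows essentially the same route as the paper: both truncate $M_P$ along the pairwise disjoint rectangular horoball neighbourhoods of \reflem{RectangleVertices}, observe that each cusp piece $R_i\times[h,\infty)$ has finite volume, and note that the truncated remainder is compact. In fact your Step 2 supplies more justification for that compactness claim (via the finite-sided fundamental domain and local finiteness of the packing) than the paper, which simply asserts it.
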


\begin{proof}
Let $\{H_1, \dots, H_n\}$ be pairwise disjoint horoballs, one for each ideal vertex of $M_P$, as in \reflem{RectangleVertices}. 
Then removing these horoballs and horoball neighbourhoods from $M_P$ yields a compact manifold with boundary consisting of finitely many boundaries of horoball neighbourhoods and Euclidean planes $H_i\cap M_P$, and finitely many hyperplanes $\bdy H(c)\cap M_P$, where $c\in P$ or $P^*$ is from the circle packing or its dual. This has finite volume.

Finally, the horoball neighbourhoods must have finite volume, since they are of the form $R_i\times [1,\infty)$ for $R_i$ a Euclidean rectangle, as in \reflem{RectangleVertices}. Thus $M_P$ has finite volume.
\end{proof}

\subsection{Building Link Complements}

In this section we describe how to build a hyperbolic link complement using a scooped manifold. The idea behind this construction is inspired by \emph{fully augmented links}, and their relation to circle packings on the sphere. The construction here generalises this by starting with circle packings on a surface of higher genus.

First, we define a generalisation of a fully augmented link. 
	
\begin{definition}\label{Def:FALNoTwists}
Let $M$ be a 3-manifold and let $\Sigma$ be an embedded surface of genus $g \ge 2$ in $M$. Then a link $L$ in a tubular neighborhood of $\Sigma$ consisting of components $K_1, \dots K_k$ and $C_1, \dots C_n$ is called a \emph{fully augmented link on} $\Sigma$ if it has the following properties.
\begin{enumerate}
\item $\coprod_{1\leq i\leq k} K_i$ is embedded in $\Sigma$.
\item $C_j$ bounds a disk $D_j$ in $M$ such that $D_j$ intersects $\Sigma$ transversely in a single arc, and $D_j$ meets the union $\coprod_{i} K_i$ in exactly two points, for $1 \leq j \leq n$.
\item A projection of $L$ to $\Sigma$ yields a 4-valent \emph{diagram graph} on $\Sigma$. We require this diagram to be connected. 
\end{enumerate}
The components $K_i$ are said to lie in the \emph{projection surface}, while the components $C_j$ are called \emph{crossing circles.}
\end{definition}

We may also add a half twist at crossing circles, corresponding to cutting along $D_j$ and regluing so that the two points of intersection of $\coprod_i K_i$ with $D_j$ are swapped. This is shown in \reffig{HalfTwist}.  
\begin{figure}
  \centering
  \includegraphics{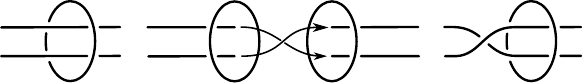}
  \caption{Shows how to cut and reglue at a crossing circle to add a half-twist.}
  \label{Fig:HalfTwist}
\end{figure}

\begin{definition}
The link resulting from adding a single half-twist at some or no crossing circles is also called a \emph{fully augmented link on a surface}, even though condition (1) in \refdef{FALNoTwists} is typically not satisfied anymore after such a half-twist. If the distinction is important, we will say that the link of \refdef{FALNoTwists} is a fully augmented link on a surface \emph{without half-twists}.
\end{definition}

Fully augmented links on surfaces can be quite complicated. A 3-dimensional example on a genus-2 surface is shown in \reffig{FAL3D}. 

\begin{figure}
  \includegraphics[scale=0.2]{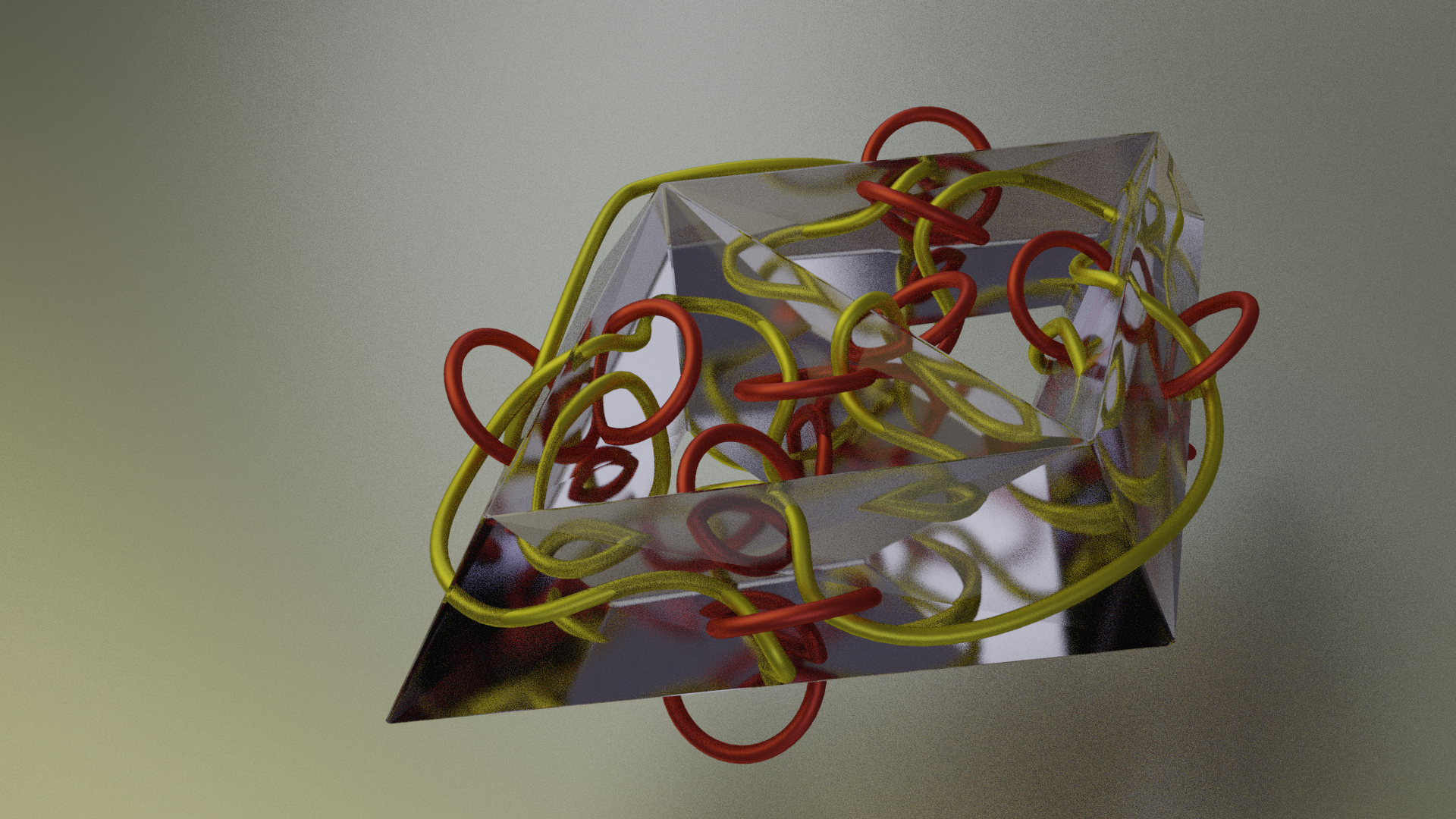}
  \caption{An example of a fully augmented link on a genus-2 surface, with crossing circles shown in red. This image was generated in Blender~\cite{blender}.}
  \label{Fig:FAL3D}
\end{figure}
	
\begin{definition}
Let $M$ be a manifold with boundary. The \emph{double} of $M$ is the manifold
\[
M \times \{0,1\} / \sim \quad \text{ where } (x,0) \sim (x,1) \text{ for all } x \in \partial M.
\]
We denote the double of $M$ by $\mathcal{D}(M)$.
\end{definition}

\begin{proposition} \label{Prop:DMNeverS3}
Let $M$ be an orientable compact manifold with connected boundary. Then the double of $M$ is not $S^3$, unless $\partial M$ is homeomorphic to $S^2$.
\end{proposition}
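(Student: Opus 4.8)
The plan is to argue by contraposition, using rational homology. Since $M$ is a compact orientable $3$-manifold, its connected boundary $\Sigma := \partial M$ is a closed orientable surface, so the hypothesis ``$\Sigma$ is not homeomorphic to $S^2$'' means exactly that $\Sigma$ has genus $g \ge 1$, i.e.\ $\dim_{\QQ} H_1(\Sigma;\QQ) = 2g > 0$. I would show that in this case $\calD(M)$ has homology incompatible with $S^3$; in fact I expect to extract a direct contradiction rather than merely a Betti-number inequality.

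First I would run the Mayer--Vietoris sequence for the decomposition $\calD(M) = M \cup_\Sigma M$ with $\QQ$ coefficients. Writing $i_*\from H_1(\Sigma;\QQ)\to H_1(M;\QQ)$ for the map induced by inclusion (the same for both copies, as both are glued to $\Sigma$ by the identity), the relevant portion reads
\[
H_2(\calD(M);\QQ) \xrightarrow{\ \partial\ } H_1(\Sigma;\QQ) \xrightarrow{\ \alpha\ } H_1(M;\QQ)\oplus H_1(M;\QQ) \longrightarrow H_1(\calD(M);\QQ),
\]
where $\alpha(x) = (i_* x, -i_* x)$. Under the assumption $\calD(M)\cong S^3$ we have $H_2(S^3;\QQ)=0$, so exactness forces $\ker\alpha = \operatorname{im}\partial = 0$; that is, $\alpha$ is injective. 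Since $\alpha(x)=0$ already forces $i_* x = 0$, injectivity of $\alpha$ is equivalent to injectivity of $i_*$, so $i_*$ must be injective.

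The contradiction then comes from the ``half lives, half dies'' principle: for a compact orientable $3$-manifold with boundary, Poincar\'e--Lefschetz duality gives $\dim_{\QQ}\ker\big(i_*\from H_1(\Sigma;\QQ)\to H_1(M;\QQ)\big) = \tfrac12\dim_{\QQ}H_1(\Sigma;\QQ) = g$. When $g\ge 1$ this kernel is nonzero, contradicting the injectivity of $i_*$ established above. Hence $g=0$ and $\partial M \cong S^2$, as claimed.

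The only nontrivial ingredient is the half-lives-half-dies statement, whose hypotheses (compactness and orientability of $M$) hold by assumption; the rest is bookkeeping in Mayer--Vietoris, where the sole point requiring care is that both gluing inclusions induce the \emph{same} map $i_*$, so that $\alpha$ has the stated diagonal form. I would regard verifying these hypotheses and this identification as the main (modest) obstacle. As an alternative that avoids citing duality at its sharpest, one can instead read off $\dim_{\QQ}H_1(\calD(M);\QQ) = 2\,b_1(M) - \operatorname{rank} i_*$ from the same sequence and combine it with $\operatorname{rank} i_* = g \le b_1(M)$ to get $b_1(\calD(M))\ge g \ge 1$, again ruling out $S^3$; but the injectivity contradiction above is the shortest route.
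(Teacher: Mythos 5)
Your proof is correct, but it takes a different route from the paper's. The paper argues via the fundamental group: it constructs the folding retraction $r\from \calD(M)\to M_1$, concludes that $\pi_1(M_1)\to\pi_1(\calD(M))$ is injective, and then invokes half-lives-half-dies (the same Hatcher lemmas you implicitly rely on) in its ``half lives'' form --- the image of $H_1(\bdy M;\QQ)\to H_1(M;\QQ)$ is $g$-dimensional, so $H_1(M)\neq 0$ and hence $\pi_1(\calD(M))\neq 1$. You instead work entirely in rational homology: Mayer--Vietoris for $\calD(M)=M\cup_\Sigma M$ shows that $H_2(\calD(M);\QQ)=0$ would force $i_*\from H_1(\Sigma;\QQ)\to H_1(M;\QQ)$ to be injective, which contradicts the ``half dies'' form of the same duality statement ($\dim\ker i_* = g \geq 1$). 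Your identification of the kernel of $\alpha(x)=(i_*x,-i_*x)$ with $\ker i_*$ is the only point needing care, and you handle it correctly; one should also note the standard thickening needed to apply Mayer--Vietoris to a decomposition along a closed surface, but that is routine. As for what each approach buys: the paper's retraction argument yields the stronger group-theoretic conclusion that $\pi_1(M)$ injects into $\pi_1(\calD(M))$, so the double is never simply connected; your argument avoids fundamental groups and basepoints altogether and, since it only uses $H_2(\calD(M);\QQ)=0$, actually proves the stronger homological statement that $\calD(M)$ is not even a rational homology sphere when $\partial M\not\cong S^2$. Either conclusion suffices for the proposition.
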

	
\begin{proof}
Let $M_1$ and $M_2$ denote the two copies of $M$ in the double of $M$, where $\text{int}(M_1) \cap \text{int} (M_2) = \varnothing$ and $\partial M_1 = \partial M_2$. Now for a point $x \in M_2$ let $\tilde {x}$ denote the same point in $M_1$, or if $x \in M_1$ then $\tilde{x}$ denotes the point in $M_2$. Then the map $r\from \mathcal \mathcal{D}(M) \to M_1$ defined by
\[
r(x) = \begin{cases}
  x \text{ if } x \in M_1, \\
  \tilde {x} \text{ if } x \in M_2
\end{cases}
\]
satisfies $r \mid_{M_1}$ is the identity. Moreover, $r$ is continuous since it is continuous on $M_1$ and $M_2$ and agrees on $M_1\cap M_2 = \bdy M_1$. Thus $r$ is a rectract of $\calD(M)$ onto $M_1$. It follows that the inclusion $M \hookrightarrow \calD(M)$ induces an injection $i_*\from \pi_1(M_1) \to \pi_1(\mathcal D (M))$.

On the other hand, $\pi_1(M_1)$ is nontrivial, since its abelianisation $H_1(M)$ has rank equal to half the rank of $H_1(\bdy M_1)$, which is $2g\geq 2$ unless $\partial M_1=S^2$; see~\cite[Lemmas~3.5, 3.6]{Hatcher:3MfldNotes}. 
Thus $\calD(M)$ is not $ S^3$ unless $\partial M=S^2$. 
\end{proof}

We are now ready to start our construction. 

\begin{construction}\label{Constr:FAL}
Let $M=\HH^3/\Gamma$ be a convex cocompact hyperbolic 3-manifold whose conformal boundary on $\bdy\calM(\Gamma)$ admits a circle packing $P$ with dimer.

By \refprop{BoundaryMP}, the boundary of the scooped manifold $M_P$ is checkerboard coloured black and white, with all black faces consisting of paired totally geodesic ideal triangles.

Form the scooped manifold $M_P$.
Take a second copy $M_P'$ of $M_P$ with the opposite orientation and identify each white face of $M_P$ with its copy in $M_P'$ via the identity map identifying these faces.

Black faces in $M_P$ are each paired in $M_P$ by the dimer, with the coloured edge of the dimer running over a pair of ideal vertices in the two triangles. Glue these paired ideal triangles by a hyperbolic isometry, folding over the ideal vertex meeting the dimer. Do the same for the paired black triangles in $M_P'$.
\end{construction}

\begin{theorem}\label{Thm:LinksInDouble}
Let $M = \HH^3/\Gamma$ be a convex cocompact hyperbolic 3-manifold. Suppose the conformal boundary $\bdy\calM(\Gamma)$ admits a circle packing with a dimer. Then Construction~\ref{Constr:FAL} above yields a finite volume hyperbolic 3-manifold $N$ that is the complement of a fully augmented link $L$ on $\bdy\calM(\Gamma)$ in $\calD(\calM(\Gamma))$, without half-twists. That is, $N = \calD(\calM(\Gamma))-L$.
\end{theorem}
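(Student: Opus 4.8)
The plan is to prove two separate assertions packaged in \refthm{LinksInDouble}: that the face-pairings of \refprop{BoundaryMP}'s boundary data endow $N$ with a complete finite-volume hyperbolic structure, and that the underlying topological manifold is $\calD(\calM(\Gamma))$ with a fully augmented link removed. For the metric, I would begin by noting that $M_P$ and its mirror $M_P'$ are each finite-volume hyperbolic with totally geodesic ideal-polygon boundary meeting at right angles (\refprop{BoundaryMP} and \reflem{FiniteVolume}), and that every boundary face is glued to exactly one other by an isometry: white faces are paired between $M_P$ and $M_P'$ by reflection, while black faces are paired within each copy by the dimer, which is a perfect matching on black faces. Hence $N$ has no remaining boundary, and since faces are glued by isometries of totally geodesic walls the metric extends across all codimension-one strata automatically; it then suffices to verify the edge cycles and completeness. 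Every edge meets its two faces at dihedral angle $\pi/2$ by \refprop{BoundaryMP}(5), and tracing an edge cycle in the rectangular vertex models of \reflem{RectangleVertices} — alternately crossing a white face (passing between $M_P$ and $M_P'$) and a black face (folding within one copy) — returns to the starting wedge after exactly four steps, giving total angle $4\cdot\pi/2=2\pi$. Thus no cone singularities arise; because the cycle length is four rather than a proper divisor, the quotient is a manifold rather than an orbifold.

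For completeness I would examine the cusps. By \reflem{RectangleVertices} each ideal vertex has a Euclidean rectangle as horospherical cross-section, and each face-pairing isometry (a reflection in a white wall, or a black fold) carries horospheres to horospheres, acting on them as a Euclidean isometry. Since every side of every rectangle lies on a wall and every wall is glued, the finitely many rectangular cross-sections assemble along their edges by Euclidean isometries into closed flat surfaces — tori, as $N$ is orientable. Every cusp is therefore complete and of finite volume, and combined with \reflem{FiniteVolume} for both copies this shows $N$ is a complete finite-volume hyperbolic manifold whose ends are rank-two cusps, hence the complement of a link in a closed manifold.

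For the topological identification I would reverse the construction. Each scooped manifold $M_P$ is $\calM(\Gamma)$ with a family of half-balls removed, so regluing the white faces between $M_P$ and $M_P'$ reconstitutes the doubling locus $\Sigma=\bdy\calM(\Gamma)$ away from a one-complex, and filling the resulting cusps along their meridians should recover $\calD(\calM(\Gamma))$ exactly, with the cores of the filling solid tori forming a link $L$. I would then read off the components of $L$ from \refprop{BoundaryMP}: the white faces, doubled across $\Sigma$, carry the projection components $K_i$ embedded in $\Sigma$, while the black ideal triangles, paired by the dimer and glued, assemble into the shaded twice-punctured disks bounded by the crossing circles $C_j$, each meeting $\coprod_i K_i$ in two points. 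Using the checkerboard colouring, the four-valence of ideal vertices, and the orthogonal meeting of faces, I would check conditions (1)--(3) of \refdef{FALNoTwists} and confirm that the reflective (non-twisting) black fold yields a diagram without half-twists; the genericity of $\calD(\calM(\Gamma))\neq S^3$ is consistent with \refprop{DMNeverS3}.

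I expect this last identification to be the main obstacle. Verifying the edge-cycle and cusp conditions is essentially bookkeeping once the rectangular cross-sections are fixed, but matching the dimer-driven pairing of black triangles to the twice-punctured-disk structure of a fully augmented link, proving that meridian filling returns the ambient manifold \emph{on the nose} as the double $\calD(\calM(\Gamma))$ rather than merely up to homotopy, and certifying the absence of half-twists, will all require careful control of the combinatorics of the nerve, its dual, and the dimer.
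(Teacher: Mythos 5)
Your treatment of the hyperbolic structure --- face gluings by isometry, four right dihedral angles identified around each edge giving $2\pi$, rectangular horospherical cross-sections assembling into flat tori for completeness, finite volume from \reflem{FiniteVolume} --- is essentially the paper's argument and is fine. The genuine gap is in the topological identification, which is the heart of the theorem. Your plan is to glue abstractly and then ``reverse the construction'': reconstitute $\Sigma$ from the white faces and claim that filling the resulting cusps along their meridians recovers $\calD(\calM(\Gamma))$ on the nose, with the cores of the filling tori forming the fully augmented link. But ``meridian'' has no intrinsic meaning for a cusp of an abstract hyperbolic manifold: a meridian is defined only relative to an embedding in an ambient closed manifold, which is exactly what you are trying to produce, so this step is circular as stated. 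Even granted a choice of slopes, identifying the filled closed manifold as $\calD(\calM(\Gamma))$ --- not merely up to homotopy equivalence; note the double is not hyperbolic, so no rigidity theorem pins it down --- and identifying the cores as a link satisfying \refdef{FALNoTwists} would require an argument you do not supply. You flag this yourself as ``the main obstacle,'' but a proof needs a mechanism to overcome it.

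The paper's mechanism is different and avoids Dehn filling altogether: it never leaves $\calD(\calM(\Gamma))$. The scooped manifolds $M_P$ and $M_P'$ sit inside the double by definition, and every gluing of Construction~\ref{Constr:FAL} is realized by an explicit isotopy inside $\calD(\calM(\Gamma))$ of the truncated pieces $\overline{M_P}$, $\overline{M_P}'$: truncated white faces are isotoped onto their common projection in $\Sigma$, and each dimer-paired pair of black triangles, doubled into three-punctured spheres, is collapsed across an embedded ball as in \reffig{GlueBlackFaces}. This exhibits $N$ directly as a submanifold of the double, and the complementary solid tori are explicit: vertex rectangles paired by the dimer become crossing circles, while the chains of remaining vertex rectangles close up into tori lying on the white surface $\Sigma$, giving the components $K_i$ embedded in the projection surface. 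The conditions of \refdef{FALNoTwists} and the absence of half-twists are then read off from this embedded picture rather than certified after the fact. If you wanted to salvage your filling route, you would still need essentially this isotopy argument to know which slopes are meridians and where the cores sit, so the paper's approach is the more economical one.
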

	
\begin{proof}
Let $N$ denote the manifold obtained by the construction. There are three things we need to show: the construction gives a submanifold of $\calD(\calM(\Gamma))$, the result is homeomorphic to a fully augmented link complement in $\calD(\calM(\Gamma))$, and that it is a complete hyperbolic manifold of finite volume.

For ease of notation, we will denote $\calM(\Gamma)$ simply by $\calM$.
We start by showing that $N$ is a submanifold of $\calD(\calM)$. The definition of a scooped manifold gives a natural embedding of $M_P$  and $M_P^\prime$ in $\calD(\calM)$ such that $M_P \cap M_P' = \varnothing$. Under this embedding the ideal vertices of $M_P$ and $M_P'$ are identified and lie on $\Sigma = \bdy \calM = \bdy \calM'$ in $\calD(\calM)$.

By \reflem{RectangleVertices}, there is a collection of horoball neighbourhoods $H_i$ with boundaries meeting the ideal vertices in Euclidean rectangles $R_i$. By shrinking the $H_i$ if needed, we may assume that for each rectangle, 
the length of any side meeting a black triangle is $1/h$, for some fixed large $h$.
Let $\overline{M_P}$ denote the result of removing the horoballs $H_i$ from $M_P$. Thus $\overline{M_P}$ is a compact manifold with corners. Similarly form $\overline{M_P}'$ by removing identically sized horoball neighbourhoods from $M_P'$. 

Since the (black) truncated side lengths of $M_P$ are identical, we can glue truncated black triangles in $\overline{M_P}$ to their pair in $\overline{M_P}$ by hyperbolic isometry, and similarly for $\overline{M_P}'$. We may similarly glue truncated white faces in $\overline{M_P}$ to those in $\overline{M_P}'$ by isometry, because we will be truncating an identical amount in $M_P$ and its reflection.

Let $F \subset \partial \overline{M_P}$ be a truncated white face. Then there exists a projection $p\from F \to \Sigma$. Similarly, the corresponding truncated white face $F' \subset \partial \overline{M_P}'$ has an analogous projection $p' \from F' \to \Sigma$ such that $p(F) = p'(F')$. Both of these projections can be extended to isotopies of $\overline{M_P}$ and $\overline{M_P}'$ in $\calD(\calM)$. Since all such maps, for all white faces, correspond to isotopies, the manifold resulting from gluing the white faces is a submanifold of $\calD(\calM)$. 
				
Next we look at gluing pairs of truncated black triangles. Let $T_1$ and $T_2$ be two truncated black triangles in $\bdy \overline{M_P}$ that are paired by the dimer on $P$ across a vertex $v$, and let $R_v$ be the rectangle which truncates $v$. Similarly let $T_1'$ and $T_2'$ be the corresponding truncated triangles in $\bdy \overline{M_P}'$ with $R_v'$ the rectangle meeting them. After identifying the white faces, the non-truncated edges of $T_1$ and $T_1'$ will be identified, and similarly for $T_2$ and $T_2'$. Then after gluing white faces, $T_1 \cup T_1'$ and $T_2 \cup T_2'$ will correspond to a pair of spheres with three open disks removed. They are joined together via $R_v$ and $R_v'$: after we identify the white faces, the white edges of $R_v$ and $R_v'$ have been identified, forming a cylinder $A$. The black edges on the ends of this cylinder form one of the boundary components of both spheres $T_1 \cup T_1'$, $T_2 \cup T_2'$. See \reffig{GlueBlackFaces}.
		
\begin{figure}
  \centering
\begingroup%
  \makeatletter%
  \providecommand\color[2][]{%
    \errmessage{(Inkscape) Color is used for the text in Inkscape, but the package 'color.sty' is not loaded}%
    \renewcommand\color[2][]{}%
  }%
  \providecommand\transparent[1]{%
    \errmessage{(Inkscape) Transparency is used (non-zero) for the text in Inkscape, but the package 'transparent.sty' is not loaded}%
    \renewcommand\transparent[1]{}%
  }%
  \providecommand\rotatebox[2]{#2}%
  \newcommand*\fsize{\dimexpr\f@size pt\relax}%
  \newcommand*\lineheight[1]{\fontsize{\fsize}{#1\fsize}\selectfont}%
  \ifx\svgwidth\undefined%
    \setlength{\unitlength}{326.27670889bp}%
    \ifx\svgscale\undefined%
      \relax%
    \else%
      \setlength{\unitlength}{\unitlength * \real{\svgscale}}%
    \fi%
  \else%
    \setlength{\unitlength}{\svgwidth}%
  \fi%
  \global\let\svgwidth\undefined%
  \global\let\svgscale\undefined%
  \makeatother%
  \begin{picture}(1,0.2809173)%
    \lineheight{1}%
    \setlength\tabcolsep{0pt}%
    \put(0,0){\includegraphics[width=\unitlength,page=1]{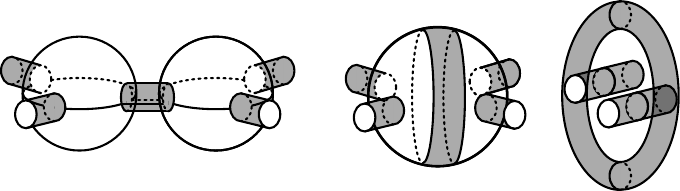}}%
    \put(0.62057305,0.25416969){\color[rgb]{0,0,0}\makebox(0,0)[lt]{\lineheight{1.25}\smash{\begin{tabular}[t]{l}$h_{1/2}(A)$\end{tabular}}}}%
    \put(0.20270857,0.17409985){\color[rgb]{0,0,0}\makebox(0,0)[lt]{\lineheight{1.25}\smash{\begin{tabular}[t]{l}$A$\end{tabular}}}}%
    \put(0.13800928,0.23522459){\color[rgb]{0,0,0}\makebox(0,0)[lt]{\lineheight{1.25}\smash{\begin{tabular}[t]{l}$T_1^\prime$\end{tabular}}}}%
    \put(0.36213334,0.23450966){\color[rgb]{0,0,0}\makebox(0,0)[lt]{\lineheight{1.25}\smash{\begin{tabular}[t]{l}$T_2^\prime$\end{tabular}}}}%
    \put(0.13800928,0.03612231){\color[rgb]{0,0,0}\makebox(0,0)[lt]{\lineheight{1.25}\smash{\begin{tabular}[t]{l}$T_1$\end{tabular}}}}%
    \put(0.36106094,0.03612231){\color[rgb]{0,0,0}\makebox(0,0)[lt]{\lineheight{1.25}\smash{\begin{tabular}[t]{l}$T_2$\end{tabular}}}}%
    \put(0.89509821,0.29885153){\color[rgb]{0,0,0}\makebox(0,0)[lt]{\lineheight{1.25}\smash{\begin{tabular}[t]{l}$h_1(A)$\end{tabular}}}}%
  \end{picture}%
\endgroup%

  \caption{The result of gluing the white faces in $\overline{M_P}$ and $\overline{M_P}^\prime$ is shown on the left, with cylinders formed from truncated ideal vertices shown in grey (note that faces shown in white are black faces in $\overline{M_P}$ and $\overline{M_P}'$). From the second to third image we identify the black faces (shown as white). We see that if the cylinder came from a ideal vertex between two paired black triangles then the gluing corresponds to a crossing circle. }
  \label{Fig:GlueBlackFaces}
\end{figure}
		
We can then perform an isotopy expanding $A$ so that $T_1 \cup T_1'$ and $T_2 \cup T_2'$ and $A$ lie on a sphere $S$ with $A$ forming a closed neighbourhood of a north-south great circle for $S$. We continue the isotopy, identifying $T_1\cup T_1'$ to $T_2\cup T_2'$ across a ball bounded by this sphere, as shown in \reffig{GlueBlackFaces}. This corresponds to identifying $T_1$ with $T_2$, and $T_1'$ with $T_2'$. Observe that the result after identification is a disk $D$ with with two open disks removed. The annulus $A$ has two boundary components identified to form a torus. This torus meets the black geodesic surface of $D$ on its outside boundary, corresponding to a longitude. The other two boundary components of $D$ correspond to two cylinders obtained by gluing vertices which do not pair black faces in the dimer. See \reffig{GlueBlackFaces}. Thus the ideal vertices that pair black triangles correspond to crossing circles.
		
Each of these steps is by isotopy in $\calD(\calM)$. We do this for each pair of truncated black triangles on $\bdy \overline{M_P}$. Hence the gluing of $\overline{M_P}$ and  $\overline{M_P}'$ gives a submanifold of $\calD(\calM)$. 
Finally, note that the gluing of $M_P$ now embeds as a submanifold of $\calD(\calM)$ because it is homeomorphic to the gluing of the truncated $\overline{M_P}$ without its boundary. 

We still need to show that $N$ is homeomorphic to a link complement in $\calD(\calM)$. We have seen that ideal vertices meeting paired black faces will correspond to crossing circles in $\calD(\calM)$. Now let $v_0 \in V$ be a vertex which does not pair two black faces. Let $R_{v_0}$ be the rectangle on $\bdy \overline{M_P}$ associated with $v_0$. Then $R_{v_0}$ meets two truncated black triangles $T_{-1}, T_1 \subset \bdy\overline{M_P}$. The triangle $T_1$ is paired to another truncated black triangle $T_2$ as specified by the dimer on $P$, across a vertex $v_1$. Similarly $T_{-1}$ is paired to another truncated black triangle $T_{-2}$, across a vertex $p_{-1}$. See \reffig{LinksInSurface}. After gluing $T_1$ and $T_2$, one of the black edges of $R_{v_0}$ will be glued to a black edge of another rectangle $R_{v_1}$ that intersects $T_2$, while the other black edge of $R_{v_0}$, will be glued to a black edge of a rectangle $R_{v_{-1}}$ that intersects $T_{-2}$.
		
\begin{figure}
  \centering
  \import{images/}{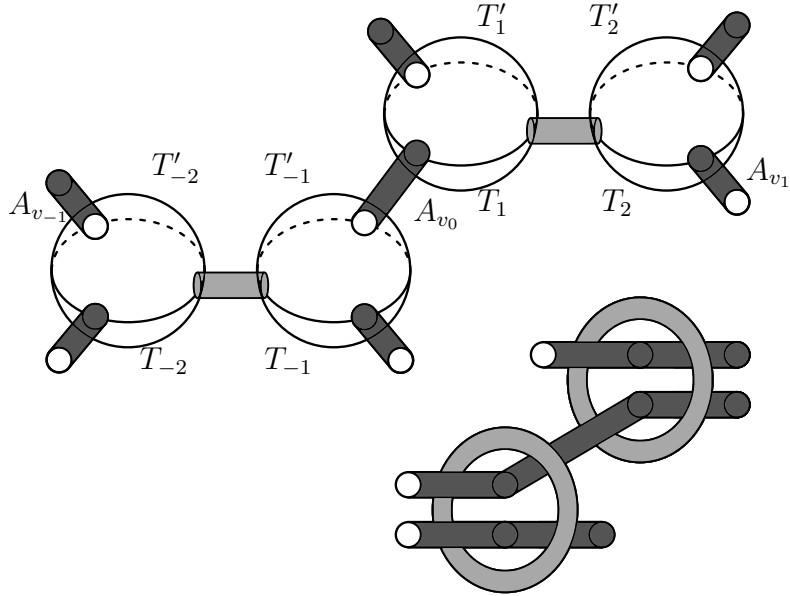}
  \caption{First image shows the result after gluing truncated white faces (the truncated black triangles $T_i$ are shown as white). The light grey cylinders correspond to the cylinders associated with the paired vertices $v_{-1}$ and $v_1$. The dark grey cylinders do not pair black triangles together. The second image shows the result after gluing black triangles together.}
  \label{Fig:LinksInSurface}
\end{figure}

After gluing white faces, the pairs $R_{v_k}$ and $R_{v_k}'$, for $k \in \{-1,0,1\}$, are glued along their white edges and form cylinders, which we denote $A_k$ for $k \in \{-1,0,1\}$. After gluing the black faces, $A_{-1}$ will be glued to one end of $A_0$ while $A_1$ will be glued to the other end. Let $A$ be the the result of gluing these three cylinders together. The cylinder $A$ then passes through the two crossing circles associated with $v_{-1}$ and $v_{1}$. This is shown in the second image in \reffig{LinksInSurface}.
		
Every cylinder associated with a vertex $v \in V$ that does not pair black faces has its ends glued to other cylinders. It follows that the collection of all such cylinders forms a collection of tori. If $T$ is such a torus then $T$ has a Euclidean structure given by gluing a chain of rectangles $R_{v_0},R_{v_1}, \dots , R_{v_k}$ together; these are glued along their black sides. This chain is then glued to the corresponding chain $R_{v_0}',R_{v_1}', \dots , R_{v_k}'$ via their white sides. Note that the white sides of $R_{v_i}$ and $R_{v_i}'$, for $i \in \{0,1, \dots , k\}$ lie on a geodesic surface formed from gluing the white faces. In this sense each of these tori lies on the white surface formed from from the gluing of white faces, which is homeomorphic to $\bdy \calM$. Thus the glued manifold $N$ is homeomorphic to the complement of a fully augmented link on a surface without half twists. The ideal boundary components that correspond to vertices in $V$ pairing black faces are crossing circles, while the other vertices make up portions of the link components in the surface.
		
Finally we show that the resulting gluing has a complete hyperbolic structure. The fact that it has a hyperbolic structure follows from the fact that the gluing of faces is by isometry, and the faces meet at dihedral angle $\pi/2$, with four such angles identified under the gluing. Thus the sum of dihedral angles around any edge is $2\pi$; see for example~\cite[Theorem~4.7]{purcell2020hyperbolic}. 

To show that the structure is complete, we need to show that each of the ideal torus boundary components has an induced Euclidean structure; see for example~\cite[Theorem~4.10]{purcell2020hyperbolic}. We have seen that each torus boundary component is tiled by rectangles $R_v$ coming from ideal vertices of the scooped manifold. The cusp structure is induced by the gluing of the Euclidean rectangles. Since they are rectangles, with angles $\pi/2$, and matching side lengths, they do indeed give the cusp a Euclidean structure. 

Finally $N$ is finite volume since $M_P$ and $M_P\prime$ have finite volume, by \reflem{FiniteVolume}. Alternately since we have a complete hyperbolic 3-manifold with ideal boundary consisting of tori it must be finite volume; see for example~\cite[Theorem~5.24]{purcell2020hyperbolic}).
\end{proof}
	
One nice property of the links formed from this identification is that we can use the dimer on the nerve to draw the link directly from the circle packing.

\begin{corollary}
The link formed from the gluing of $M_{P}$ and $M_{P}^\prime$ can be drawn directly from the nerve of $P^*$ on $\Sigma$.
\end{corollary}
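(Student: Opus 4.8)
The plan is to read each piece of the link directly off the combinatorics of the nerve of $P^*$, using the dictionary already assembled in the proof of \refthm{LinksInDouble}. Write $K$ for the nerve of $P$ and $K^*$ for the nerve of $P^*$; recall that these are dual graphs embedded on $\Sigma$. First I would translate every object of the construction into $K^*$: its vertices are the circles of $P^*$, that is, the black triangles of $\bdy M_P$; its edges correspond to the edges of $K$, which in turn correspond to the ideal vertices of $M_P$ (the tangency points where two circles of $P$ and two of $P^*$ meet, by \refprop{BoundaryMP}); and its faces are the circles of $P$, the white faces. Since $K$ is a triangulation, $K^*$ is trivalent.

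Next I would record how the dimer manifests on $K^*$. By \reflem{AddCirclesDimer} and \refprop{BoundaryMP}, the dimer colours the edges of $K$ so that each triangle carries exactly one coloured edge; dually this colours the edges of $K^*$ so that each (trivalent) vertex meets exactly one coloured edge, i.e.\ the coloured edges form a perfect matching of $K^*$. By the proof of \refthm{LinksInDouble}, an ideal vertex produces a crossing circle precisely when it pairs two black triangles under the dimer — that is, precisely at a coloured edge of $K^*$ — while the remaining ideal vertices contribute arcs of the link lying in the projection surface.

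I would then identify the knot components lying in $\Sigma$. Deleting the coloured edges from the trivalent graph $K^*$ leaves exactly two edges at every vertex, so the uncoloured edges assemble into a disjoint union of embedded cycles on $\Sigma$. These cycles are isotopic to the components $K_i$ of the link in the projection surface: in the proof of \refthm{LinksInDouble} each such component is built from a chain of rectangles $R_{v_0}, R_{v_1}, \dots$ glued along their black sides, and the vertices $v_j$ traversed in this chain are exactly the uncoloured ideal vertices encountered in order, so the chain closes up into one of these cycles (compare \reffig{LinksInSurface}). Thus the projection-surface part of $L$ is obtained by tracing the uncoloured cycles of $K^*$ on $\Sigma$, and the crossing circles are reinserted by placing a small circle linking the two strands meeting at each coloured edge, corresponding to the folding of $T_1$ onto $T_2$ over the coloured ideal vertex in \reffig{GlueBlackFaces}.

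The step I expect to be the main obstacle is verifying that this combinatorial recipe reproduces the link of \refthm{LinksInDouble} \emph{up to isotopy in} $\calD(\calM)$, rather than merely matching it piece by piece. Concretely, one must check that the cyclic order in which the strands pass through each black triangle agrees with the gluing, that each crossing circle links exactly the two correct strands and does so with a flat, half-twist-free crossing as in \refthm{LinksInDouble}, and that the embedding of $K^*$ given by the circle centres of $P^*$ is isotopic to the one realised by the isotopies constructed in that proof. These are isotopy-level bookkeeping statements, made tractable by \reffig{GlueBlackFaces} and \reffig{LinksInSurface}, but they carry the real content of the corollary.
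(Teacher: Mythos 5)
Your proposal takes essentially the same approach as the paper's proof: the dictionary you set up (the trivalent nerve of $P^*$, coloured edges of the dimer corresponding to ideal vertices pairing black triangles and hence to crossing circles, uncoloured edges corresponding to arcs in $\Sigma$) is exactly the paper's argument, and your recipe of tracing the uncoloured cycles and inserting a crossing circle at each coloured edge matches its conclusion. The paper's own proof is in fact terser than yours---it states this correspondence in a few sentences and defers the isotopy-level verification to a local picture (\reffig{LinkFromGraph})---so the bookkeeping you flag as the ``main obstacle'' is treated there at the same pictorial level, and your write-up is, if anything, more detailed.
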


\begin{proof}
The nerve of $P^*$ is 3-valent with a coloured edge given by the dimer on $P$. Each coloured edge in $P^*$ corresponds to an ideal vertex shared by two paired black faces on $\partial M_P$. Such a vertex corresponds to a crossing circle. The two edges that are not coloured correspond to arcs in $\Sigma$. So for each coloured edge in $P^*$, draw a crossing circle, with arcs between crossing circles the non-coloured edges of $P^*$. Figure~\ref{Fig:LinkFromGraph} shows the local picture.
\end{proof}

\begin{figure}
  \centering
  \includegraphics{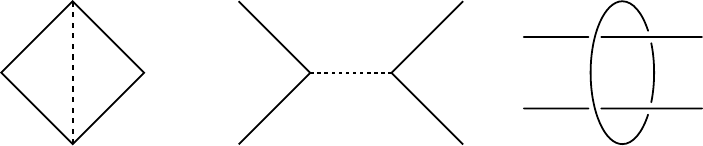}
  \caption{Left shows part of the nerve of $P$ in which the edge coloured by the dimer is shown dashed. Middle shows corresponding part of the nerve of $P^*$. Right, replace the dashed line with a crossing circle to draw the link.}
  \label{Fig:LinkFromGraph}
\end{figure}

\subsection{Adding Half-Twists}

\begin{lemma}\label{Lem:AddTwist}
  Let $C$ be a crossing circle of a fully augmented link $L$ embedded in a closed 3-manifold $M$ such that $M - L$ is hyperbolic. Then for the link $L'$ obtained by adding a half twist at $C$, the complement $M-L'$ is also hyperbolic.
\end{lemma}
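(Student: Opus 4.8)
The plan is to realise the half-twist as a cut-and-reglue along a totally geodesic thrice-punctured sphere, exploiting the rigidity of the complete hyperbolic structure on the thrice-punctured sphere. Since $C$ is a crossing circle, it bounds a disk $D$ meeting the remaining components of $L$ in exactly two points. In the complement $N := M - L$ this disk becomes an embedded thrice-punctured sphere $S$: two punctures arise from the strands passing through $D$, and the third from the boundary curve $\bdy D$ lying on $\bdy N(C)$. Because $N$ is hyperbolic it contains no essential spheres, disks, annuli, or tori, and this forces $S$ to be incompressible and not boundary-parallel, hence essential. (A compression would have to separate the two strand-punctures, contradicting that $C$ genuinely encircles two strands; boundary-parallelism would produce a product region that cannot occur in a hyperbolic manifold.)

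First I would invoke the standard theorem of Adams that an essential thrice-punctured sphere in a finite-volume hyperbolic $3$-manifold is isotopic to a unique \emph{totally geodesic} surface, and isotope $S$ accordingly. Cutting $N$ along $S$ then yields a (possibly disconnected) hyperbolic manifold $N_{\mathrm{cut}}$ whose metric completion has totally geodesic boundary consisting of two copies $S_1, S_2$ of $S$; regluing $S_1$ to $S_2$ by the identity recovers $N$ itself.

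Next I would observe that, on the level of complements, adding a half-twist at $C$ is exactly the operation of regluing $S_1$ to $S_2$ by the half-twist homeomorphism $\tau$ of $S$ --- the mapping class that swaps the two strand-punctures and fixes the puncture coming from $C$. The key point is that the complete hyperbolic structure on the thrice-punctured sphere is rigid and that its mapping class group is realised by isometries. Concretely, identifying $S$ with $\widehat{\CC}\setminus\{0,1,\infty\}$ so that the two strand-punctures sit at $0,1$ and the $C$-puncture at $\infty$, the M\"obius transformation $z\mapsto 1-z$ is an orientation-preserving isometry that realises $\tau$ up to isotopy. Thus $\tau$ may be taken to be an isometry of the totally geodesic surface $S$.

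Finally, regluing the totally geodesic boundary components $S_1,S_2$ of $N_{\mathrm{cut}}$ by the isometry $\tau$ produces a complete hyperbolic structure on the resulting manifold, which by construction is precisely $M-L'$. This is the same totally-geodesic gluing principle used throughout the present paper: because $S$ is totally geodesic the hyperbolic metrics on the two sides match smoothly across $S$, giving a hyperbolic metric with no cone angles, and because $\tau$ is a cusp-preserving isometry it sends horocycles to horocycles, so the Euclidean cusp cross-sections glue up correctly and the structure is complete. Hence $M-L'$ is hyperbolic. I expect the main obstacle to be the verification in the third and fourth steps: confirming that the half-twist is genuinely isotopic to an isometry of $S$ (which rests on the rigidity and uniqueness of the hyperbolic thrice-punctured sphere) and that the regluing preserves completeness precisely at the ends where $\tau$ permutes the cusps of $S$; establishing essentiality of $S$ in the first step also warrants care.
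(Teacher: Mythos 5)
Your proposal follows essentially the same route as the paper's proof: both realise the half-twist as cutting along the totally geodesic thrice-punctured sphere bounded by $C$ (via Adams) and regluing by the mapping class that fixes the $C$-puncture and swaps the two strand punctures, which is an isometry by rigidity of the complete hyperbolic structure on the thrice-punctured sphere. Your additional verifications (essentiality of $S$, the explicit isometry $z\mapsto 1-z$, and completeness of the reglued structure at the cusps) are correct elaborations of details the paper leaves implicit.
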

	
\begin{proof}
This follows from Adams~\cite{adams1985thrice}. The crossing circle $C$ bounds a 3-punctured sphere, which is isotopic to a totally geodesic surface. Cut along this surface and reglue via the homeomorphism of the 3-punctured sphere that keeps the puncture associated with $C$ fixed and swaps the other two punctures. Since there is only one complete hyperbolic structure on a 3-punctured sphere, this is an isometry, hence gives a hyperbolic manifold with the desired properties.
\end{proof}

If we look back at the original gluing in \refthm{LinksInDouble}, adding a half twist at a crossing circle corresponds to changing the gluing of the black faces in $\bdy M_P$ and $\bdy M_P'$. Instead of gluing a black triangle to its pair on the same half, it will be glued to the pair in the opposite half.
	
\begin{lemma} \label{Lem:HalfTwistGluing}
Let $N$ be a manifold formed in the manner of Construction~\ref{Constr:FAL}, which are complements of fully augmented links without half-twists by \refthm{LinksInDouble}. Adding a half twist at a crossing circle corresponds to gluing a black triangle $T_1$ of $M_P$ with the triangle $T_2'$ on $M_P'$, paired to the reflection $T_1'$ of $T_1$ by the dimer. 
\end{lemma}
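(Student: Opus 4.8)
The plan is to track how Adams' cut-and-reglue operation, as invoked in \reflem{AddTwist}, acts on the black ideal triangles that assemble the crossing-circle cusp in Construction~\ref{Constr:FAL}. The heart of the matter is a bookkeeping of which triangles get identified, controlled by an orientation computation on the thrice-punctured sphere bounded by $C$.

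First I would identify the totally geodesic thrice-punctured sphere $S$ bounded by $C$ in terms of the building blocks. Recall from the proof of \refthm{LinksInDouble} that the crossing circle $C$ arises from a dimer pairing: black triangles $T_1,T_2\subset\bdy M_P$ paired across the ideal vertex $v$, together with their reflections $T_1',T_2'\subset\bdy M_P'$. Gluing the white faces identifies $T_1$ with $T_1'$ along all three of their edges, so $S_+:=T_1\cup T_1'$ is the double of an ideal triangle, hence a totally geodesic thrice-punctured sphere, with punctures at the three ideal vertices: $v$ (which becomes the cusp $C$) and the two remaining vertices $p_2,p_3$ (which lie on $\coprod_i K_i$). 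Similarly $S_-:=T_2\cup T_2'$. The no-twist black-face gluing of Construction~\ref{Constr:FAL} identifies $S_+$ with $S_-$ by the isometry $g$ sending $T_1\to T_2$, $T_1'\to T_2'$ and matching the punctures $v,p_2,p_3$. Thus the image of $S_+=S_-$ in $N$ is exactly the totally geodesic thrice-punctured sphere bounded by $C$ used in \reflem{AddTwist}, and cutting $N$ along it undoes precisely this black-face identification, returning $M_P\cup_{\text{white}}M_P'$ with the two unglued thrice-punctured spheres $S_+,S_-$.

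Next I would pin down the regluing map. By \reflem{AddTwist} and the definition of a half-twist following \refdef{FALNoTwists}, the half-twisted complement is obtained by regluing $S_+$ to $S_-$ via $g\circ\tau$, where $\tau$ is a self-homeomorphism of $S_+$ fixing the puncture $v$ and transposing $p_2,p_3$. The crucial point is that $\tau$ must be taken orientation-preserving: cutting along the two-sided surface $S$ gives boundary copies carrying opposite induced orientations, and since the original gluing $g$ reverses boundary orientations, $g\circ\tau$ reglues $N$ into an oriented manifold precisely when $\tau$ is orientation-preserving (equivalently, the half-twist is realised by the orientation-preserving $\pi$-rotation of the spanning disk $D_j$). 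The orientation-preserving mapping class of the thrice-punctured sphere fixing $v$ and transposing $p_2,p_3$ is unique and is realised by the order-two rotation about the axis through $v$ and its antipode. Viewing $S_+$ as the sphere with $v,p_2,p_3$ on the equatorial geodesic separating the two ideal-triangle hemispheres $T_1$ and $T_1'$, this rotation interchanges the two hemispheres. Hence $\tau$ swaps $T_1\leftrightarrow T_1'$ while transposing $p_2,p_3$.

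Finally I would compose: the half-twist gluing $g\circ\tau\from S_+\to S_-$ sends $T_1\xrightarrow{\tau}T_1'\xrightarrow{g}T_2'$ and $T_1'\xrightarrow{\tau}T_1\xrightarrow{g}T_2$. Therefore adding a half-twist at $C$ identifies the black triangle $T_1\subset\bdy M_P$ with $T_2'\subset\bdy M_P'$ (and $T_1'$ with $T_2$), and since $T_2'$ is the reflection of $T_2$, it is exactly the dimer-pair of the reflection $T_1'$ of $T_1$, as claimed. I expect the main obstacle to be the orientation bookkeeping in the middle step: the \emph{orientation-reversing} realisation of the same puncture transposition is a reflection fixing each hemisphere, which would send $T_1\to T_2$ and change nothing. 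Establishing that the half-twist regluing is orientation-preserving, and hence the hemisphere-swapping rotation, is the decisive point; the remaining combinatorics of the dimer pairing is then routine.
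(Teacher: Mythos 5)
Your proof is correct and takes essentially the same route as the paper: the paper's entire argument is that a half-twist is realised by rotating the half $T_1\cup T_1'$ by $180^\circ$ before gluing, which is precisely your hemisphere-swapping rotation $\tau$, giving $T_1\mapsto T_2'$ and $T_1'\mapsto T_2$. Your orientation argument ruling out the hemisphere-preserving reflection supplies rigorously what the paper leaves to its figures.
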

	
\begin{proof}
  A half-twist is added by rotating the half $T_1\cup T_1'$ of \reffig{GlueBlackFaces}, middle, by $180^\circ$ before gluing. See \reffig{HalfTwistGluing}. This glues $T_1$ with $T_2'$, and $T_1'$ with $T_2$, via an orientation reversing isometry. 
\end{proof}
		
\begin{figure}
  \centering
\begingroup%
  \makeatletter%
  \providecommand\color[2][]{%
    \errmessage{(Inkscape) Color is used for the text in Inkscape, but the package 'color.sty' is not loaded}%
    \renewcommand\color[2][]{}%
  }%
  \providecommand\transparent[1]{%
    \errmessage{(Inkscape) Transparency is used (non-zero) for the text in Inkscape, but the package 'transparent.sty' is not loaded}%
    \renewcommand\transparent[1]{}%
  }%
  \providecommand\rotatebox[2]{#2}%
  \newcommand*\fsize{\dimexpr\f@size pt\relax}%
  \newcommand*\lineheight[1]{\fontsize{\fsize}{#1\fsize}\selectfont}%
  \ifx\svgwidth\undefined%
    \setlength{\unitlength}{231.4385722bp}%
    \ifx\svgscale\undefined%
      \relax%
    \else%
      \setlength{\unitlength}{\unitlength * \real{\svgscale}}%
    \fi%
  \else%
    \setlength{\unitlength}{\svgwidth}%
  \fi%
  \global\let\svgwidth\undefined%
  \global\let\svgscale\undefined%
  \makeatother%
  \begin{picture}(1,0.17109404)%
    \lineheight{1}%
    \setlength\tabcolsep{0pt}%
    \put(0,0){\includegraphics[width=\unitlength,page=1]{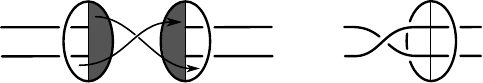}}%
    \put(0.22901383,0.14636847){\color[rgb]{0,0,0}\makebox(0,0)[lt]{\lineheight{1.25}\smash{\begin{tabular}[t]{l}$T_1$\end{tabular}}}}%
    \put(0.07232791,0.01321053){\color[rgb]{0,0,0}\makebox(0,0)[lt]{\lineheight{1.25}\smash{\begin{tabular}[t]{l}$T_1^\prime$\end{tabular}}}}%
    \put(0.43763384,0.01456342){\color[rgb]{0,0,0}\makebox(0,0)[lt]{\lineheight{1.25}\smash{\begin{tabular}[t]{l}$T_2^\prime$\end{tabular}}}}%
    \put(0.28328272,0.14596339){\color[rgb]{0,0,0}\makebox(0,0)[lt]{\lineheight{1.25}\smash{\begin{tabular}[t]{l}$T_2$\end{tabular}}}}%
  \end{picture}%
\endgroup%

  \caption{Shows how gluing black triangles in $\bdy M_P$ to the paired triangle in $\bdy M_P^\prime$ corresponds to adding a half twist.}
  \label{Fig:HalfTwistGluing}
\end{figure}
	
\begin{lemma}\label{Lem:SingleComponent}
Let $M=\HH^3/\Gamma$ be a convex cocompact hyperbolic 3-manifold. Let $N$ be the complement of a fully augmented link in $\calD(\calM)$ constructed in Construction~\ref{Constr:FAL}. Then we may form a new hyperbolic 3-manifold $N'$ such that $N'$ is the complement of a fully augmented link $L'$ on $\bdy\calM \subset \calD(\calM)$, where $L'$ has only one component that is not a crossing circle on each component of $\partial \mathcal{M}$, and $L'$ is formed from $L$ by adding half twists at some of the crossing circles of $L$.
\end{lemma}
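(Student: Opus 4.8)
The plan is to reduce the statement to a purely combinatorial claim about merging components and then run a spanning-tree argument. Write $L = K_1\cup\cdots\cup K_k \cup C_1\cup\cdots\cup C_n$, where the $K_i$ are the projection-surface components lying on $\Sigma=\bdy\calM$ and the $C_j$ are the crossing circles. By \refdef{FALNoTwists}, each crossing circle $C_j$ meets $\coprod_i K_i$ in exactly two points, and these two points lie on components $K_{a(j)}$ and $K_{b(j)}$ (possibly equal). I would encode this data by the multigraph $H$ with vertex set $\{K_1,\dots,K_k\}$ and one edge $e_j$ joining $K_{a(j)}$ and $K_{b(j)}$ for each crossing circle $C_j$ (a loop when $a(j)=b(j)$).

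The key local fact is that a half-twist at $C_j$ reconnects the two strands passing through the crossing. Using \reflem{HalfTwistGluing}, the half-twist changes the gluing of the adjacent black triangles so that the strand entering the crossing on one side now leaves on the opposite side; tracing the two affected strands then shows that if $a(j)\neq b(j)$ the two distinct components $K_{a(j)}$ and $K_{b(j)}$ are joined into a single closed curve, while every other component is left unchanged. In other words, a half-twist at a crossing joining two distinct components decreases the number of non-crossing-circle components by exactly one, and its effect on $H$ is to contract the edge $e_j$. (The case of a loop edge I would simply avoid.)

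Next I would establish that $H$ is connected. This follows from the requirement in \refdef{FALNoTwists}(3) that the diagram graph of $L$ on $\Sigma$ is connected: a path in the diagram graph between an arc of $K_a$ and an arc of $K_b$ can switch between distinct components only at crossings, so it projects to a walk in $H$ from $K_a$ to $K_b$. Choosing a spanning tree $\calT$ of $H$ yields $k-1$ edges, necessarily non-loops, corresponding to $k-1$ distinct crossing circles, and I would add a half-twist at each of these. Ordering the tree edges arbitrarily and contracting them one at a time, each edge is a non-loop at the moment it is contracted --- because the tree edges always form a forest under contraction and a forest has no loops --- so by the local fact each half-twist merges two components. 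After the $k-1$ half-twists we are left with a single non-crossing-circle component, producing the link $L'$ and the manifold $N'$; by the definition allowing half-twists, $L'$ is still a fully augmented link on $\bdy\calM$ in $\calD(\calM)$.

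Finally, hyperbolicity of $N'$ is immediate from \reflem{AddTwist}: since $\calD(\calM)$ is closed and $N=\calD(\calM)-L$ is hyperbolic by \refthm{LinksInDouble}, each of the finitely many half-twists preserves hyperbolicity, so $N'$ is hyperbolic. The main obstacle is the local reconnection step: one must carefully match the geometric regluing of black triangles described in \reflem{HalfTwistGluing} with the asserted change in strand connectivity, and confirm that the operation genuinely realises an edge contraction of $H$ (in particular that the outcome is order-independent), so that the spanning-tree bookkeeping is valid.
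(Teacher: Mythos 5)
Your proposal is correct and follows essentially the same route as the paper: the paper's proof also uses connectivity of the diagram graph to find, at each stage, a crossing circle through which two distinct non-crossing-circle components pass, adds a half twist there to concatenate them, and repeats until one component remains. Your spanning-tree bookkeeping in the multigraph $H$ is just a more formal packaging of this greedy induction (and, like the paper, it leans on \reflem{HalfTwistGluing} and \reflem{AddTwist} for the merging and hyperbolicity), so there is no substantive difference.
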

	
\begin{proof}
Let $K_1, \dots , K_n$ be the link components of $L$ that are not crossing circles. If $n \geq 2$, then since the diagram graph of $L$ is connected, there must be some crossing circle $C$ such that there are two distinct components $K_j$ and $K_k$ passing through $C$. Let $L_C$ denote the link formed by adding a half twist at $C$ to $L$. Adding the half twist at $C$ concatenates $K_j$ and $K_k$, reducing the number of components by one. Repeat until there is only one component that is not a crossing circle on each component of $\partial \mathcal{M}$.
\end{proof}

\subsection{Showing Geometric Convergence}
	
Now we show how we can use the construction of the previous section to construct sequences of link complements which converge geometrically to $M$.
	
\begin{lemma}\label{Lem:LinksConvergeDouble}
Let $M=\mathbb{H}^3/\Gamma$ be a convex-cocompact
hyperbolic 3-manifold homeomorphic to the interior of a compact 3-manifold $\overline M$ and let $\epsilon>0$ and $R>0$.

Then there exists a finite volume hyperbolic 3-manifold with framed basepoint $(M_{\epsilon,R},p_{\epsilon,R})$ that is a link complement in $\calD(\overline M)$
such that $(M_{\epsilon,R},p_{\epsilon,R})$ is $(\epsilon, R)$-close to $(M,p)$, where $p$ is the framed basepoint on $M=\HH^3/\Gamma$ induced by $O$ in $\HH^3$.
\end{lemma}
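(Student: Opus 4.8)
The plan is to assemble the pieces already developed in the paper into a single bilipschitz comparison. The target manifold $M = \HH^3/\Gamma$ is convex cocompact, so I would first invoke \reflem{ScoopedContainingBall} applied to the given $\epsilon, R$ (or rather to a suitable auxiliary $\epsilon'$ to be fixed below). This produces an $e^{\epsilon'}$-quasiconformal homeomorphism $\phi$ fixing $0,1,\infty$ and conjugating $\Gamma$ to a group $\Gamma_{\epsilon'}$, such that the deformed convex cocompact manifold $M_{\epsilon'} = \HH^3/\Gamma_{\epsilon'}$ admits a circle packing $\overline P$ with a dimer on its conformal boundary, and such that the metric ball $B(O,R)/\Gamma_{\epsilon'}$ lies entirely inside the scooped manifold $(M_{\epsilon'})_{\overline P}$. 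With the circle packing and dimer in hand, I would then apply \refthm{LinksInDouble} to build the finite volume hyperbolic manifold $N = M_{\epsilon,R}$, which is the complement of a fully augmented link (without half-twists) on $\bdy\calM(\Gamma_{\epsilon'})$ inside $\calD(\overline M)$. I set the basepoint $p_{\epsilon,R}$ to be the image of $O$ under $\HH^3 \to M_{\epsilon'} \hookrightarrow N$; this makes sense precisely because the ball of radius $R$ about $O$ sits inside the scooped manifold, which embeds isometrically in $N$.

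\textbf{The comparison.} The $(\epsilon,R)$-closeness of $(M_{\epsilon,R}, p_{\epsilon,R})$ to $(M,p)$ is now a two-stage estimate. First, \refcor{close} guarantees that, for the given $R$ and a target distortion $\epsilon$, there is a $\delta>0$ so that whenever $\phi$ is $(1+\delta)$-quasiconformal and fixes $0,1,\infty$, its visual extension $\Phi$ establishes that $(\HH^3/\Gamma_\phi, p_\phi)$ is $(\epsilon,R)$-close to $(\HH^3/\Gamma, p)$. So I would choose $\epsilon'$ small enough (i.e. $e^{\epsilon'} \le 1+\delta$) that \refcor{close} applies; this pins down how small the Brooks deformation must be. Second, and this is the geometric heart of the matter, I must observe that the ball $B(O,R)/\Gamma_{\epsilon'}$ in $M_{\epsilon'}$ is \emph{isometric} to its image in $N$. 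This follows because $N$ is obtained from the scooped manifold $(M_{\epsilon'})_{\overline P}$ (and its reflected copy) purely by gluing boundary faces via isometries, and the interior of the scooped manifold is isometrically embedded; since the ball lies in the interior, away from all the scooping half-spaces by construction, its hyperbolic metric is undisturbed by the gluing. Hence the $(\epsilon,R)$-closeness of $(M_{\epsilon'}, p_{\epsilon'})$ to $(M,p)$ from \refcor{close} transports verbatim to $(N, p_{\epsilon,R}) = (M_{\epsilon,R}, p_{\epsilon,R})$.

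\textbf{Main obstacle.} The step requiring the most care is verifying that the metric ball embeds isometrically into $N$ rather than merely that the underlying manifold embeds. The construction in \refthm{LinksInDouble} glues faces of $M_P$ and its reflection $M_P'$ and also folds paired black triangles, and a priori one must be sure none of these identifications reach into $B(O,R)/\Gamma_{\epsilon'}$ or alter its metric. The clean way to handle this is to note that \reflem{ScoopedContainingBall} was stated precisely to guarantee $H(c) \cap B(O,R) = \varnothing$ for every circle $c \in \overline P$ and its dual $\overline P^*$; thus the ball is disjoint from every scooping half-space and therefore maps into the interior of the scooped manifold, which the gluing of \refthm{LinksInDouble} leaves pointwise fixed. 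I would phrase this as: the inclusion $B(O,R)/\Gamma_{\epsilon'} \hookrightarrow (M_{\epsilon'})_{\overline P} \hookrightarrow N$ is a composition of isometric embeddings, so the bilipschitz comparison from \refcor{close} (between $M$ and $M_{\epsilon'}$) factors through $N$ with no additional distortion. Finally I would reconcile the names of the distortion constants, taking $\epsilon$ in the statement to be the one produced by \refcor{close}, so that the conclusion reads exactly as an $(\epsilon,R)$-closeness statement.
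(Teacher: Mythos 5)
Your proposal is correct and follows essentially the same route as the paper's proof: apply \reflem{ScoopedContainingBall} to get a slightly deformed convex cocompact manifold with a dimered circle packing whose scooped manifold contains the $R$-ball, use \refcor{close} to get $(\epsilon,R)$-closeness of the deformed manifold to $M$, build the link complement via \refthm{LinksInDouble}, and transport the comparison through the isometric embedding of the scooped manifold. Your additional care in explaining why the scooped piece embeds isometrically (the ball avoids all scooping half-spaces, and the gluing only touches boundary faces) simply makes explicit what the paper states in one line.
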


\begin{proof}
By \reflem{ScoopedContainingBall}, we can find an $e^\delta$-quasiconformal homeomorphism $\phi$ fixing $0,1,\infty$ conjugating $\Gamma$ to $\Gamma_\delta$
such that the associated convex-cocompact manifold $N_{\delta}=\HH^3/\Gamma_{\delta}$ admits a circle packing $P_{\delta}$ on its conformal boundary, and the metric ball $B(0,R)/\Gamma_\delta$ is completely contained in the corresponding scooped manifold $(N_{\delta})_{P_\delta}$.
Further, we may take $N_{\delta}$, $P_{\delta}$ as above so that the nerve of $P_{\delta}$ admits a dimer. By Corollary \ref{Cor:close}, $N_{\delta}$ is $(\varepsilon,R)$-close to $M$ for $\delta$ sufficiently small, if both $M=\HH^3/\Gamma$ and $N_{\delta}=\HH^3/\Gamma_{\delta}$ are endowed with the framed basepoint $p,p_\delta$ induced from $O$ in $\HH^3$.

Let $M_{\epsilon,R}$ be a link complement in $\calD(\overline{M})$ formed from gluing two copies of $(N_{\delta})_{P_\delta}$ in the manner specified in \refthm{LinksInDouble} for $\delta=\delta(\epsilon,R)$ small as above. Since $(N_{\delta})_{P_\delta}$ isometrically embeds in $M_{\epsilon,R}$, we have (denoting the image of $p_\delta$ by $p_{\epsilon,R}$) that $(M_{\epsilon, R},p_{\epsilon,R})$ is $(\epsilon,R)$-close to $(M,p)$. 
\end{proof}

As an immediate consequence we have:
\begin{corollary}\label{Cor:LinksConvergeDouble}
  The links of \reflem{LinksConvergeDouble} converge geometrically to $M$.\qed
\end{corollary}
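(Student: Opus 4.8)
The plan is to deduce geometric convergence directly from \reflem{LinksConvergeDouble} together with \refdef{geometric_conv}, via a diagonal choice of parameters. First I would fix a pair of monotone sequences $\epsilon_k \searrow 0$ and $R_k \nearrow \infty$ and apply \reflem{LinksConvergeDouble} to each pair $(\epsilon_k, R_k)$. This produces a sequence of finite volume hyperbolic link complements $(M_k, p_k) := (M_{\epsilon_k, R_k}, p_{\epsilon_k, R_k})$ in $\calD(\overline M)$, each carrying a framed basepoint induced by $O \in \HH^3$, with the property that $(M_k, p_k)$ is $(\epsilon_k, R_k)$-close to the fixed framed manifold $(M, p)$.

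The one thing that needs verifying is that the $(\epsilon, R)$-closeness relation is monotone in its parameters: if $(N_2, p_2)$ is $(\epsilon', R')$-close to $(N_1, p_1)$ and $\epsilon' \le \epsilon$, $R' \ge R$, then $(N_2, p_2)$ is also $(\epsilon, R)$-close to $(N_1, p_1)$. This is immediate from the definition, since restricting the witnessing $(1+\epsilon')$-bilipschitz embedding $\tilde{f}$ from $B_{R'}(O)$ to the smaller ball $B_R(O)$ leaves it $(1+\epsilon')$-bilipschitz, hence $(1+\epsilon)$-bilipschitz as $\epsilon \ge \epsilon'$; its $C^0$-distance to the inclusion is still at most $\epsilon' \le \epsilon$; and the descended embedding $f$ simply restricts to the sub-ball $B_R(O)/\Gamma_1$.

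With monotonicity in hand, the corollary follows by unwinding \refdef{geometric_conv}. Given arbitrary $\epsilon, R > 0$, I would choose $k_0$ large enough that $\epsilon_k \le \epsilon$ and $R_k \ge R$ for all $k \ge k_0$; this is possible precisely because $\epsilon_k \to 0$ and $R_k \to \infty$. For such $k$, monotonicity upgrades the $(\epsilon_k, R_k)$-closeness to $(\epsilon, R)$-closeness of $(M_k, p_k)$ to $(M, p)$. Since $\epsilon, R$ were arbitrary, this is exactly the statement that the sequence $(M_k, p_k)$ converges geometrically to $(M, p)$, and hence that the links converge geometrically to $M$. I do not expect any genuine obstacle here: the entire analytic content is carried by \reflem{LinksConvergeDouble}, and the corollary is a formal repackaging of that lemma into the language of \refdef{geometric_conv}, which is presumably why it is recorded as an immediate consequence.
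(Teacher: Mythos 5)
Your argument is correct and is exactly the routine unwinding that the paper leaves implicit: the corollary is stated with \qed as an immediate consequence of Lemma~\ref{Lem:LinksConvergeDouble}, and your diagonal choice $(\epsilon_k,R_k)=(\epsilon_k\searrow 0, R_k\nearrow\infty)$ plus the monotonicity of $(\epsilon,R)$-closeness is precisely what makes it immediate. No discrepancy with the paper's approach.
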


We now turn the link complements of \refcor{LinksConvergeDouble} into knot complements. 

\begin{theorem}\label{Thm:ConvergenceDouble}
Let $M$ be a convex cocompact hyperbolic 3-manifold that is the interior of a compact 3-manifold $\overline{M}$.  
Then there exists a sequence of finite volume hyperbolic 3-manifolds $M_n$ that are link complements in $\calD (\overline M)$, with one link component per boundary component of $\overline{M}$, such that $ M_n $ converges geometrically to $M$.

In particular, if $\overline{M}$ has a single boundary component, then $M$ is the geometric limit of a sequence of knot complements. 
\end{theorem}
	
\begin{proof}
By taking $(\epsilon,R)=(1/n,n)$ in \reflem{LinksConvergeDouble}, we find a sequence of fully augmented links on a surface in $\calD(\overline{M})$ which contain $(n+1)/n$-bilipschitz images $B(p,n)\subset M$. By \reflem{SingleComponent}, by adding half twists at some of the crossing circles we obtain a fully augmented link on the surface $\bdy\overline{M}\subset \calD(\overline{M})$ that has a single component that is not a crossing circle on each component of $\bdy\overline{M}$. Lemma~\ref{Lem:HalfTwistGluing} shows that adding a half twist corresponds to changing the gluing of black faces, which does not affect the embedding $B(p,n)$ of \reflem{LinksConvergeDouble}. Thus we obtain a sequence $L_n$ of complements of fully augmented links in $\calD (\overline{M})$ converging geometrically to $(M,p)$, for suitable framed basepoints, such that for each component of $\bdy\overline{M}$ embedded in $\calD(\overline{M})$, only one link component is not a crossing circle.

Let $s\in \ZZ$ be a positive integer. Observe that $1/s$ Dehn filling on a crossing circle $C$ of $L_n$ inserts $2s$ crossings into the twist region encircled by $C$ and removes the link component $C$. We do this for all crossing circles. Let $i_n$ be the number of crossing circles in $L_n$, and let $ s_k^1, \dots , s_k^{i_n} $ denote sequences of positive integers approaching infinity as $k\rightarrow \infty$. Thurston's hyperbolic Dehn surgery theorem tells us that for fixed $n$ the sequence of manifolds $ M_n(1/s_k^1, \dots , 1/s_k^{i_n}) $ converges geometrically to $M_n$~\cite{thurston1979geometry}. Taking a diagonal sequence, we obtain a sequence of knot complements in $\calD (\overline{M})$ converging geometrically to $M$.
\end{proof}

\subsection{Effective Dehn filling}
We promised in the introduction a constructive method to build knot complements converging to $M$. Theorem~\ref{Thm:ConvergenceDouble} uses Thurston's hyperbolic Dehn surgery theorem to imply that such knots must exist, however that theorem is not constructive. In this section, we explain how the proof can be modified to use cone deformation techniques to explicitly construct knots with the desired properties.

To do so, we need to know more about the cusp shapes and normalised lengths of Dehn filling slopes on the link complements $M_{\epsilon,R}$ of \reflem{LinksConvergeDouble}.

\begin{lemma}\label{Lem:CuspTiling}
  In the hyperbolic structure on the fully augmented link complement $M_{\epsilon,R}$ of \reflem{LinksConvergeDouble}, each cusp corresponding to a crossing circle is tiled by two identical Euclidean rectangles. Each rectangle has a pair of opposite sides coming from the intersection of a horospherical cusp torus with black sides, and a pair coming from an intersection with white sides. The slope $1/n$ on this cusp is isotopic to a curve as follows:
  \begin{itemize}
  \item If the crossing circle does not meet a half-twist, the slope is given by one step along a white side, plus or minus $2n$ steps along black sides. 
  \item If the crossing circle meets a half-twist, then the meridian is sheared. Thus the slope is given by one step along a white side, plus or minus $(2n+1)$ steps along black sides.
  \end{itemize}
  In either case, if $c$ is the number of crossings added to this twist region of the diagram after Dehn filling, then the slope is given by one step along a white side plus or minus $c$ steps along black sides.
\end{lemma}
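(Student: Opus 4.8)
The plan is to read the cusp torus of a crossing-circle cusp directly off the explicit gluing in \refthm{LinksInDouble}, then extract the filling slope from the resulting Euclidean structure. First I would establish parts (1) and (2). By \reflem{RectangleVertices}, each ideal vertex $v$ of the scooped manifold is truncated by a Euclidean rectangle $R_v=\partial H_i\cap M_P$; the two sides of $R_v$ coming from the pair of $P$-circles tangent at $v$ form an opposite (white) pair, the two sides from the pair of $P^*$-circles form the other opposite (black) pair, and orthogonality of $P$ and $P^*$ makes $R_v$ a genuine rectangle. For a vertex $v$ that pairs two black triangles (hence produces a crossing circle), I would trace the gluing of \refthm{LinksInDouble}: identifying white faces glues the white sides of $R_v$ to those of its reflected copy $R_v'\subset M_P'$, forming a cylinder $A$ whose two boundary circles are black, and identifying the paired black triangles glues these two black circles to each other. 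Thus the crossing-circle cusp is exactly $R_v\cup R_v'$, two rectangles which are isometric because $R_v'$ is the reflected image of $R_v$; this yields (1) and (2).

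Next I would set up flat coordinates. Present the cylinder $A$ as $\{(x,y):x\in\RR/2\alpha\ZZ,\ 0\le y\le\beta\}$, where $\alpha$ is the black side length and $\beta$ the white side length, so that going once around $A$ traverses two black sides and the two white seams are vertical. The torus lattice is then obtained by gluing the two black boundary circles. Invoking \reflem{HalfTwistGluing}, with no half-twist the black circles are identified without shift, giving lattice generators $(2\alpha,0)$ and $(0,\beta)$; with a half-twist they are identified after a half-circumference shift, giving generators $(2\alpha,0)$ and $(\alpha,\beta)$.

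I would then identify the meridian and longitude and finish by lattice arithmetic. The longitude $\lambda$ --- the curve parallel to the crossing circle, lying on the black $3$-punctured sphere that $C$ bounds --- is the core direction of the cylinder, i.e.\ once around $A$, equal to two black steps, $(2\alpha,0)$. The meridian $\mu$, which is the slope $1/0$ whose filling removes the augmentation (leaving the base crossing configuration), runs up the cylinder along a white seam: with no half-twist it is one white step $(0,\beta)$, while the half-twist shift forces it to be one white step plus one black step $(\alpha,\beta)$. In each case $\mu$ is exactly the nontrivial lattice generator, confirming primitivity. Since $1/n$ filling fills the slope $\mu+n\lambda$, substitution gives one white step $\pm 2n$ black steps in the untwisted case and one white step $\pm(2n+1)$ black steps in the half-twisted case. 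As \refthm{ConvergenceDouble} records that $1/n$ filling inserts $2n$ crossings and a half-twist contributes one further crossing, the number of black steps equals the crossing count $c$ in both cases, yielding the uniform final statement.

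The main obstacle I expect is rigorously matching the two primitive curves to the tiling: verifying that the black $3$-punctured sphere bounded by $C$ meets the cusp torus in the core of the cylinder $A$ (so that $\lambda$ really is the two-black-step curve), and checking that the half-twist gluing of \reflem{HalfTwistGluing} shifts the black ends by precisely one black side, thereby adding exactly one black step to the meridian. Once the meridian and longitude are correctly aligned with these geometric curves, the passage from $\mu+n\lambda$ to the stated counts of white and black steps, and the identification of the black-step count with $c$, are routine.
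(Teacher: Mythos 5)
Your proposal is correct and follows essentially the same route as the paper's proof: both identify the crossing-circle cusp as the two isometric rectangles $R_v\cup R_v'$ via \reflem{RectangleVertices} and the gluing of \refthm{LinksInDouble}, take the longitude to be the curve along the two black sides and the meridian to be one white step (plus one black step in the half-twisted case, via \reflem{HalfTwistGluing}), and then read off the slope $1/n=\mu+n\lambda$. Your explicit cylinder/lattice coordinates simply formalise the paper's tracing of \reffig{GlueBlackFaces}, and the meridian--longitude identification you flag as the main remaining obstacle is asserted at the same level of detail in the paper itself.
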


\begin{proof}
The proof is completely analogous to a similar result for crossing circle cusps in the classical setting of fully augmented links in the 3-sphere; see \cite[Proposition~3.2]{Purcell:AugLinksSurvey} or \cite[Lemma~2.6, Theorem~2.7]{FuterPurcell:Exceptional}. We walk through it in this setting.

By \reflem{RectangleVertices}, each crossing circle is tiled by rectangles, each with two opposite black sides, coming from intersections of black triangles with a horospherical torus about the cusp, and two opposite white sides, coming from intersections of white faces with a horospherical torus. Tracing through the gluing construction of \ref{Constr:FAL}, with reference to \reffig{GlueBlackFaces}, the crossing circle cusps are built by first gluing one rectangle from the original scooped manifold $M_P$ to an identical copy from $M_P'$, via a reflection in a white side. When there is no half-twist, the black sides of each of these rectangles are then glued together. A longitude runs over the two black sides, meeting two white sides along the way. A meridian runs over exactly one white side, meeting exactly one black side transversely along the way. 

When a half-twist is added, the longitude still runs over two black sides, but a meridian is obtained by taking a step along a white side plus or minus a step along a black side, depending on the direction of twist. We may assume that the direction of twist matches the sign of $n$, otherwise apply a homeomorphism giving a half-twist in the opposite direction, and reduce $|n|$ by two. This introduces shearing to the meridian. 

The slope $1/n$ runs over one meridian and $n$ longitudes. In the case of no half-twists, this is one step along a white side, plus $2n$ steps along black sides. This adds $|2n| =c$ crossings to the twist region.

When there is a half-twist, the slope $1/n$ still runs over one meridian plus $n$ longitudes, but now this is given by one step along a white side plus or minus one step along a black side (with sign matching sign of $n$), plus $2n$ additional steps along black sides. Again there are $c=|2n+1|$ steps along black sides. 
\end{proof}

The \emph{normalised length} of a slope $s$ on a cusp torus $T$ is the length of a geodesic representative of the slope in the Euclidean metric on $T$, divided by the area of the torus:
\[ L(s) = \len(s) / \sqrt{\area(T)}. \]
Observe that the normalised length is independent of scale, thus it is an invariant of the cusp rather than the choice of horospherical neighbourhood of the cusp.

The following result, for fully augmented links in $\calD(M)$, is analagous to a calculation for fully augmented links in $S^3$ found in \cite{Purcell:Volumes2007}. 

\begin{lemma}\label{Lem:NormLength}
  Let $c$ be the number of crossings added by Dehn filling at a crossing circle. Then the corresponding slope of the Dehn filling has normalised length at least $\sqrt{c}$.
\end{lemma}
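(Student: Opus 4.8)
The plan is to compute the normalised length directly from the explicit Euclidean geometry of the crossing-circle cusp tori described in \reflem{CuspTiling}. By that lemma, each crossing-circle cusp is tiled by two identical Euclidean rectangles, each having a pair of \emph{black} sides and a pair of \emph{white} sides, and the relevant Dehn filling slope (of length depending on $c$) runs one step along a white side and $c$ steps along black sides. First I would fix notation: let $w$ denote the length of a white side of the rectangle and $b$ the length of a black side, so the cusp torus $T$ has total area $\area(T)=2wb$ (two rectangles). The slope $s$ being filled is homotopic to the curve consisting of one white step together with $\pm c$ black steps; since the white and black sides meet orthogonally (the dihedral angle is $\pi/2$ by \refprop{BoundaryMP}), the Euclidean length of a geodesic representative is $\len(s)=\sqrt{w^2+(cb)^2}\geq cb$.

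The key step is then to control the ratio $b/\sqrt{wb}=\sqrt{b/w}$ from below. The normalised length is
\[
L(s)=\frac{\len(s)}{\sqrt{\area(T)}}\geq \frac{cb}{\sqrt{2wb}}=\frac{c}{\sqrt{2}}\sqrt{\frac{b}{w}}.
\]
Thus to obtain $L(s)\geq\sqrt{c}$ it suffices (crudely) to show $b/w$ is bounded below by a constant, or more precisely to analyse the geometry of the rectangle more carefully. The essential geometric input, as in the $S^3$ computation of \cite{Purcell:Volumes2007}, is that the two black triangles adjacent to a crossing-circle vertex are ideal geodesic triangles, and the horospherical cross-section through an ideal vertex of an ideal triangle cuts off a definite amount of the triangle. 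I would normalise so that the white sides of the rectangle have length $1/h$ (matching the choice in the proof of \refthm{LinksInDouble}, where the truncated black-triangle sides are set to $1/h$), and then the black side length is determined by the shape of the ideal triangle meeting that vertex. Because the cusp is glued from rectangles whose side lengths are pinned by the geometry of ideal triangles, the ratio $b/w$ can be computed exactly, giving the clean bound $\sqrt{c}$.

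The main obstacle I expect is establishing the precise comparison between the white-side and black-side lengths of the rectangle, i.e.\ pinning down the Euclidean dimensions of $\bdy H_i\cap M_P$ from \reflem{RectangleVertices} in terms of the ideal-triangle geometry. The cleanest route is to lift to $\HH^3$ in the upper half-space model, send the crossing-circle ideal vertex to $\infty$ as in the proof of \reflem{RectangleVertices}, so that the two black faces and two white faces become two pairs of vertical parallel planes forming the rectangular cylinder; the black faces are the vertical planes over two of the tangent circles, and measuring the separation between the parallel black planes versus the parallel white planes gives $b$ and $w$ directly. Once the horospherical rectangle is realised this way, the inequality $L(s)\geq\sqrt{c}$ follows by the same arithmetic as in \cite{Purcell:Volumes2007}, since the three circles determining each black triangle are mutually tangent, forcing the rectangle to have the aspect ratio that yields exactly the factor $\sqrt{c}$ after the Pythagorean estimate above.
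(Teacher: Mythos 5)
Your setup coincides with the paper's: with $w$ the white side length and $b$ the black side length, the cusp has area $2bw$ and the slope has length $\sqrt{w^2+c^2b^2}$, by \reflem{CuspTiling}. The gap is in the key inequality. Discarding the $w^2$ term leaves
\[ L(s) \;\geq\; \frac{cb}{\sqrt{2bw}} \;=\; \frac{c}{\sqrt{2}}\sqrt{\frac{b}{w}}, \]
which yields $\sqrt{c}$ only when $b/w \geq 2/c$, and you propose to secure this by computing the rectangle's aspect ratio from the tangent-circle geometry. No such computation can succeed, because the aspect ratio is not pinned down at all. Normalise the crossing-circle vertex to $\infty$ as in \reflem{RectangleVertices}: the two packing circles become vertical lines, say $x=0$ and $x=1$; the third packing circle of each adjacent triangle is tangent to both lines, hence has radius $1/2$ and centre at some height $y_0$ (resp.\ $y_1$); and the two dual circles become the horizontal lines $y=y_0$ and $y=y_1$. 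Disjointness of the two radius-$1/2$ circles forces $|y_0-y_1|\geq 1$, but $|y_0-y_1|$ can be arbitrarily large, depending on the packing. Since $w/b=|y_0-y_1|$, the ratio $b/w$ has no positive lower bound, and your crude estimate degenerates exactly in the regime $w\gg cb$ --- which is precisely the regime where the discarded $w^2$ term is what keeps the true normalised length large. Relatedly, the mutual tangency of the three circles of a black triangle does not force ``exactly the factor $\sqrt{c}$'': the extremal shape is $w/b=c$, which depends on the filling slope, so no fixed rectangle realises it.

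The repair is to discard nothing: for every $w,b>0$,
\[ L(s)^2 \;=\; \frac{w^2+c^2b^2}{2bw} \;=\; \frac{w}{2b}+\frac{c^2b}{2w} \;\geq\; 2\sqrt{\frac{w}{2b}\cdot\frac{c^2b}{2w}} \;=\; c \]
by the AM--GM inequality. This is the paper's one-line proof; it requires no geometric control of the rectangle whatsoever, which is why \reflem{CuspTiling} asserts none.
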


\begin{proof}
From \reflem{CuspTiling}, we know that the two rectangles in the cusp tiling of the crossing circle are identical, hence each white side has length $w$ and each black side length $b$. The area of the cusp, with or without half-twists, is given by $2bw$. Thus by \reflem{CuspTiling}, the normalised length of the slope $1/n$ is given by
\[ L = \frac{\sqrt{w^2 + c^2 b^2}}{\sqrt{2bw}} = \sqrt{\frac{w}{2b}+\frac{c^2b}{2w}}. \]
This is minimised when $w/2b$ equals $c/2$, and the minimum value is $\sqrt{c}$. 
\end{proof}

\begin{lemma}\label{Lem:UnivCrossing}
Given $M$, $\epsilon>0, R>0$, and $M_{\epsilon,R}$ as in \reflem{LinksConvergeDouble}, let $\delta>0$ be such that
$B(p,R)$ lies in the $\delta/(1+\epsilon)$-thick part of $M$.
Let $n$ denote the number of crossing circles of the fully augmented link in $M_{\epsilon,R}$. If after Dehn filling the crossing circles, the number of crossings added to each twist region is at least
\[ n\cdot \max \left\{ \frac{107.6}{\delta^2}+14.41,
\frac{45.20}{\delta^{5/2}\log(1+\epsilon)}+ 14.41 \right\}, \]
then the inclusion map taking $B(p_{\epsilon,R},R)$ in $M_{\epsilon,R}$ into the complement of the resulting knot in $\calD(M)$ is $(1+\epsilon)$-bilipschitz. It follows that the knot complement contains a set that is $(1+\epsilon)^2$-bilipschitz to $B(p,R)$ in the original $M$.
\end{lemma}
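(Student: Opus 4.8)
The plan is to realise the knot complement as a Dehn filling of $M_{\epsilon,R}$ along all of its crossing circles and to bound the resulting metric distortion using an effective Dehn filling theorem proved by cone deformation methods, in the spirit of Hodgson--Kerckhoff and Bromberg and in the explicit form due to Futer--Purcell--Schleimer. Such a theorem controls the bilipschitz distortion of the natural inclusion of the thick part of the unfilled manifold into the filled manifold in terms of the normalised lengths of the filling slopes and the thickness of the region under consideration. Here the relevant inclusion makes sense because $M_{\epsilon,R}$ is exactly the complement, in the filled knot complement, of the core geodesics of the filling solid tori; so $B(p_{\epsilon,R},R)\subset M_{\epsilon,R}$ maps into the knot complement as soon as it avoids those cores. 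The two quantities in the stated maximum are precisely the normalised-length thresholds the theorem requires: one to guarantee that the thick region containing our ball survives the filling, and one to force the bilipschitz constant down to $1+\epsilon$.

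First I would carry out the normalised-length bookkeeping. Set $T_1 = 107.6/\delta^2 + 14.41$ and $T_2 = 45.20/(\delta^{5/2}\log(1+\epsilon)) + 14.41$, so the hypothesis is that each twist region receives at least $c \ge n\,\max\{T_1,T_2\}$ crossings. By \reflem{NormLength}, the filling slope on each of the $n$ crossing-circle cusps then has normalised length $L_i \ge \sqrt{c}$, hence $L_i^2 \ge c \ge n\,\max\{T_1,T_2\}$. Consequently the combined total normalised length $\widehat L$, defined by $\widehat L^{-2} = \sum_{i=1}^n L_i^{-2}$, satisfies $\widehat L^{-2} \le n/c \le 1/\max\{T_1,T_2\}$, so that $\widehat L \ge \max\{\sqrt{T_1},\sqrt{T_2}\}$. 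This is the combined normalised-length bound the effective filling theorem consumes, and the factor of $n$ in the hypothesis is exactly what absorbs the summation over the $n$ cusps.

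Next I would verify that $B(p_{\epsilon,R},R)$ lies in the region on which the effective filling estimate is valid, namely outside the Margulis tubes about the short core geodesics created by the filling. Here the thickness hypothesis enters: since $B(p,R)$ lies in the $\delta/(1+\epsilon)$-thick part of $M$ and \reflem{LinksConvergeDouble} furnishes a $(1+\epsilon)$-bilipschitz identification of $B(p,R)\subset M$ with $B(p_{\epsilon,R},R)\subset M_{\epsilon,R}$, the ball $B(p_{\epsilon,R},R)$ lies in the $\delta$-thick part of $M_{\epsilon,R}$. The only new short geodesics produced by the filling are the cores of the filling solid tori, which lie in the thin part; once $\widehat L$ exceeds the first threshold $\sqrt{T_1}$, the theorem guarantees that the $\delta$-thick part is carried into the knot complement disjointly from the filling Margulis tubes, so that $B(p_{\epsilon,R},R)$ genuinely lies in the domain of the bilipschitz map. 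The $\delta^{-2}$ dependence reflects that one must reach deeper into the thin part to clear the tubes as $\delta$ shrinks.

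Finally I would extract the bilipschitz constant and compose. With $\widehat L$ beyond the second threshold $\sqrt{T_2}$, the effective filling theorem yields that the inclusion of $B(p_{\epsilon,R},R)$ into the knot complement is $(1+\epsilon)$-bilipschitz; the appearance of $\log(1+\epsilon)$ in the denominator of $T_2$ is the standard reciprocal relationship between the target bilipschitz constant and the required normalised length. Composing with the $(1+\epsilon)$-bilipschitz map of \reflem{LinksConvergeDouble} between $B(p,R)\subset M$ and $B(p_{\epsilon,R},R)$ then produces a $(1+\epsilon)^2$-bilipschitz map from $B(p,R)$ into the knot complement, as claimed. I expect the main obstacle to be the precise invocation of the effective filling theorem: one must match the two geometric thresholds to the distinct roles played by $\delta$ and $\epsilon$, confirm that $B(p_{\epsilon,R},R)$ really avoids the filling Margulis tubes rather than merely lying in an abstract thick part, and track the factor of $n$ so that the per-cusp bound of \reflem{NormLength} assembles into the combined bound on $\widehat L$ that the theorem actually requires.
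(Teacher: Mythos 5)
Your proposal is correct and follows essentially the same route as the paper's proof: transfer the thickness hypothesis through the $(1+\epsilon)$-bilipschitz map of Lemma~\ref{Lem:LinksConvergeDouble}, combine the per-cusp bound $L_i^2 \ge c$ from Lemma~\ref{Lem:NormLength} into the aggregate bound $L^2 \ge C/n \ge \max\{T_1,T_2\}$, and invoke the effective bilipschitz Dehn filling theorem of Futer--Purcell--Schleimer (their Corollary~8.16) before composing the two $(1+\epsilon)$-bilipschitz maps. The only difference is expository: the paper cites the single corollary with the stated maximum rather than separating the two thresholds' roles as you do, but the mathematics is identical.
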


\begin{proof}
By \reflem{LinksConvergeDouble}, if $B(p,R)$ lies in the $\delta/(1+\epsilon)$ thick part of $M$, then $B(p_{\epsilon,R},R)$ lies in the $\delta$ thick part of $M_{\epsilon,R}$ and is $(1+\epsilon)$ bilipschitz to $B(p,R)$. 
  
Let $L^2$ be given by 
\[ \frac{1}{L^2} = \sum_{i=1}^n \frac{1}{L_i^2}, \]
where $L_i$ is the normalised length of the Dehn filling slope on the $i$-th crossing circle cusp. In \cite[Corollary~8.16]{FPS:EffectiveBilipschitz}, it is shown that if $L^2$ is at least the maximum given above, then the inclusion map on any submanifold of the $\delta$-thick part is $(1+\epsilon)$-bilipschitz.

Let $C$ be the minimal number of crossings added to any twist region. By \reflem{NormLength}, $1/L_i^2 \leq 1/C$, so $1/L^2 \leq n/C$, or $L^2 \geq C/n$. Thus if $C/n$ is at least the maximum in the formula above, we may apply the corollary from \cite{FPS:EffectiveBilipschitz} to $B(p_{\epsilon,R},R)$.
\end{proof}

\section{Reducing geometrically finite to convex cocompact}\label{Sec:GFtoCC}

The previous sections constructed link complements that converge to convex cocompact hyperbolic structures. In the case of a single topological end, the limiting manifolds are all knot complements. The construction can be extended almost immediately to geometrically finite manifolds of infinite volume. However, now in the case that the manifold has a single topological end, if that end contains a rank-1 cusp, the immediate extension produces link complements rather than knot complements.
Indeed, in the presence of rank one and rank two cusps our construction above leads to several cusp boundary components and thus to a complementary link with multiple components. 
Instead we will show that a geometrically finite manifold $M$ can be approximated geometrically by convex cocompact manifolds. Combining this with the previous results, it follows that $M$ can also be approximated geometrically by knot complements if it is of infinite volume with a single topological end.

For rank two cusps, a version of Thurston's hyperbolic Dehn surgery theorem for geometrically finite hyperbolic manifolds shows that a geometrically finite manifold is the geometric limit of geometrically finite manifolds without rank two cusps; see, for example, work of Brock and Bromberg~\cite{BrockBromberg}. However in our setting, i.e.\ a 3-manifold with one end, rank one cusps are more problematic. Here we show that for any geometrically finite hyperbolic manifold $M$, there is sequence of geometrically finite hyperbolic manifolds $M_j$ without rank one cusps converging to $M$. Moreover the sequence can be chosen such that the maps establishing this convergence are global diffeomorphisms.
In particular $M_j$ is diffeomorphic to $M$ for each $j$.

Results such as this go back to work of J{\o}rgensen, and is presumably implicit in the construction of Earle--Marden geometric coordinates (c.f.\ \cite{Mardencoordinates} and the appendix of \cite{HubbardKoch}); compare also Marden~\cite[exercises 4-24 and 5-3]{marden2007outer}. We include the result and a proof for completeness.

\begin{theorem}\label{Thm:CCtoGF}
Let $M$ be a geometrically finite hyperbolic manifold. Then there exists a sequence of geometrically finite hyperbolic manifolds $M_j$ without any rank one cusps and diffeomorphisms $M\rightarrow M_j$ establishing that the $M_j$ converge geometrically to $M$. The $M_j$ are explicitly constructed starting from $M$ and there are effective bounds for the convergence.
\end{theorem}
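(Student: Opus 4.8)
The plan is to remove the rank one cusps one at a time (there are only finitely many, by geometric finiteness together with Ahlfors' finiteness theorem), deforming the holonomy $\rho_0\from\pi_1(M)\to\PSL(2,\CC)$ so that each rank one parabolic becomes loxodromic while every rank two parabolic subgroup stays parabolic. Concretely, I would fix a framed basepoint and write $M=\HH^3/\Gamma$ with $\Gamma=\rho_0(\pi_1(M))$, choose disjoint standard horoball neighbourhoods for the cusps (available by \refthm{Bowditch}), and look for a family $\rho_t$ with $\rho_0$ the given representation and, for small $t\neq 0$, no rank one cusps.

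\emph{Existence of the family.} A geometrically finite group with no rank one cusps is \emph{minimally parabolic}, and by Marden's stability theorem \cite{marden1974geometry} these form an open subset of the representation variety on which the homeomorphism type of $\HH^3/\rho(\pi_1 M)$ is locally constant. The rank-one-cusped representation $\rho_0$ sits on the boundary of this open set, and the Earle--Marden ``geometric coordinates'' for the geometrically finite deformation space \cite{Mardencoordinates, HubbardKoch} supply explicit \emph{plumbing} directions, one per rank one cusp, that open the node; moving a small amount $t$ in these directions (inside the slice on which the rank two peripheral subgroups are constrained to remain parabolic) produces $\rho_t$ whose image $\Gamma_t$ is geometrically finite, has no rank one cusps, retains the original rank two cusps, and with $\HH^3/\Gamma_t$ diffeomorphic to $M$. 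By construction $\rho_t\to\rho_0$ algebraically as $t\to 0$, and the diffeomorphisms are supplied by structural stability.

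\emph{Upgrading to geometric convergence.} Algebraic convergence does not in general imply geometric convergence, so I would verify directly that the Chabauty limit of $\Gamma_t$ is exactly $\Gamma$ (equivalently, geometric convergence, by the Remark following \refdef{geometric_conv}). The only new elements that could appear come from diagonal sequences $g_t^{\,n(t)}$ in an opened cusp subgroup, where $g_t\to g_0$ is loxodromic converging to the rank one parabolic $g_0$. Normalising $g_0$ to fix $\infty$, the two fixed points of $g_t$ merge at $\infty$ as $t\to 0$; hence if $n(t)$ stays bounded the limit lies in $\langle g_0\rangle\subset\Gamma$, while if $n(t)\to\infty$ the element $g_t^{\,n(t)}$ either has trace bounded away from $2$ with merging fixed points (forcing its matrix to diverge) or becomes nearly parabolic and behaves like $g_0^{\,n(t)}$ and escapes; in no case does a new limit element arise. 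Thus the convergence is strong, and the maps $M\to M_j$ establish geometric convergence in the sense of \refdef{geometric_conv}.

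\emph{Effectiveness and the main obstacle.} To make the bounds effective and the convergence maps global diffeomorphisms, I would split the comparison into the region away from the opened cusps and the thin tubes replacing them. Away from the cusps the deformation is realised by quasiconformal homeomorphisms of $S^2$ whose dilatation tends to $1$ as $t\to 0$, so McMullen's visual extension \refthm{visual} and the argument of \refcor{close} give equivariant bilipschitz maps that are $C^0$-close to the identity on $B_R(O)$, with explicit dependence on $t$. The delicate part is near each parabolic fixed point, where the opening is \emph{not} a quasiconformal conjugacy (a parabolic is never quasiconformally conjugate to a loxodromic), so \refcor{close} does not apply: here I would instead insert an explicit bilipschitz model identifying a bounded-radius portion of the rank one cusp of $M$ with the corresponding portion of the Margulis tube about the short core geodesic of $M_j$, whose bilipschitz constant tends to $1$ as the core length tends to $0$, exactly as in the cone-deformation estimates underlying the drilling and filling theorems (compare \cite{BrockBromberg}). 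Gluing the two models gives a global diffeomorphism $M\to M_j$ and an effective $(\varepsilon,R)$ bound. I expect this last step --- the quantitative tube-to-cusp comparison, carried out uniformly and glued to the quasiconformally controlled exterior --- to be the main obstacle, precisely because the cusp-opening deformation fails to be quasiconformal near the limit set.
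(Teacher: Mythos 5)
Your overall strategy --- open each rank one cusp by deforming the holonomy so the parabolic becomes loxodromic, keeping rank two parabolics parabolic --- is exactly the route the paper flags as classical before its proof (it attributes the statement to J{\o}rgensen and calls it ``presumably implicit'' in the Earle--Marden plumbing coordinates), but the paper deliberately proves the theorem differently: it first adjoins a commuting parabolic $\gamma_{r_i}$ at each rank one cusp via the Klein--Maskit combination theorem, turning it into a rank \emph{two} cusp (groups $\Gamma_r$), then performs $(1,n)$-Dehn surgery on these new cusps to obtain groups $\Gamma_{rn}$ canonically isomorphic to $\Gamma$, and finally takes a diagonal sequence in $(r,n)$. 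The reason for this two-parameter detour is precisely the point where your proposal has a genuine gap: the ``upgrading to geometric convergence'' step. Your dichotomy for powers $g_t^{\,n(t)}$ --- either the matrices diverge, or they behave like $g_0^{\,n(t)}$ and escape --- is false in general. If the short loxodromic $g_t$ has complex translation length $\ell_t+i\theta_t$ with nontrivial rotation part (for instance $\theta_t\approx 2\pi/n(t)$, which is what Dehn filling produces), then $g_t^{\,n(t)}$ has trace tending to $\pm 2$ and converges to a \emph{new} parabolic fixing the cusp point, linearly independent of $g_0$; it neither diverges nor lands in $\langle g_0\rangle$. This is J{\o}rgensen's phenomenon, and it is not a marginal possibility: in the paper's own construction it actually occurs, since for fixed $r$ the groups $\Gamma_{rn}$ converge geometrically to $\Gamma_r\supsetneq\Gamma$ as $n\to\infty$, the extra parabolic $\gamma_{r_i}$ arising exactly as the limit of the powers $m_{rn}(\eta_i^{-n})$. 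Whether the Chabauty limit of your $\Gamma_t$ is $\Gamma$ or strictly larger depends on the direction of approach in the deformation space (essentially on the ratio $\theta_t/\ell_t$ as $t\to 0$), and nothing in your argument constrains this.

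To repair the proof along your lines you would have to either (a) show the pinching path can be chosen so that the multipliers of the opened cusps degenerate with controlled rotation, and additionally rule out new limit elements coming from sequences not contained in the cusp subgroups (your sketch only treats the cyclic subgroups), or (b) introduce a second parameter and diagonalise, which is in effect what the paper does: it gets geometric convergence almost for free because $\Gamma\subset\Gamma_r$, $\Gamma_r\to\Gamma$ geometrically as $r\to\infty$ by construction, and $\Gamma_{rn}\to\Gamma_r$ geometrically by Thurston's Dehn surgery theorem. Separately, your effectiveness discussion is not yet an argument: since (as you yourself note) the deformation is not a quasiconformal conjugacy, ``quasiconformal control away from the cusps'' is not a defined notion one can feed into \refthm{visual}, and the quantitative tube-to-cusp comparison is precisely the content you leave open. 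The paper gets effective bounds instead from the rectangular shape of the new rank two cusps, which forces the $1/n$ slopes to have normalised length at least $\sqrt{n}$, combined with the cone-deformation estimates cited in the proof of \reflem{UnivCrossing}.
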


To prove \refthm{CCtoGF}, we first need to set up some notation. Fix a framed basepoint on $p$ on $M$. Then $(M,p)$ corresponds to a Kleinian group $\Gamma$ such that 
$(M,p)=(\mathbb{H}^3/\Gamma,O)$. We will first construct Kleinian groups $\Gamma_{r(j)n(j)}$ corresponding to suitable hyperbolic 3-manifolds with framed basepoints $(M_j,p_j)$ that converge to $\Gamma$ in the Chabauty topology (and thus $(M_j,p_j)$ converges geometrically to $(M,p)$). When viewed as perturbations of $\Gamma$, the Kleinian groups $\Gamma_{r(j)n(j)}$ also converge algebraically to $\Gamma$ and the desired convergence properties will follow. 
  
Consider a fixed rank one cusp of $M$, generated by $\eta_1$. Up to conjugation, we may assume $\eta_1$ corresponds to $z\mapsto z+1$. For $r_1>0$, let $\gamma_{r_1}$ correspond to $z\mapsto z+r_1 \sqrt{-1}$. Add $\gamma_1:=\gamma_{r_1}$ to $\Gamma$ as a generator to obtain $\Gamma_{r_1}$, with presentation $\langle G,\gamma_1 \mid R, [\gamma_1,\eta_1]=1\rangle$, where $\langle G \mid R \rangle$ is a presentation of $\Gamma$.

\begin{lemma}\label{Lem:KleinMaskit}
For $r_1$ sufficiently large, $\Gamma_{r_1}$ is a discrete group and an HNN extension of $\Gamma$.
\end{lemma}

\begin{proof}
This will be a consequence of the second Klein-Maskit combination theorem; we use the version as stated in Abikoff and Maskit~\cite{AbikoffMaskit}, for a proof see Maskit \cite[VII~E.5]{MaskitBook}.

Let $H$ be a subgroup of $\Gamma$. Recall that a subset $B\subset \CC\cup \{\infty\}$ is precisely invariant under $H$ in $\Gamma$ if (1) for all $h\in H$, $h(B)=B$ and (2) for all $\gamma \in \Gamma\setminus H$, $\gamma(B)\cap B = \varnothing$.
In our setting, consider the round discs 
$D_\pm:=D_\pm(r_1)=\{z\in \CC \mid \pm\mbox{Im}(z)>r_1/2\}$  in $\CC\cup \{\infty\}$. 
We claim that for $r_1$ sufficiently large, the $\Gamma$-orbits of $D_+$ and $D_-$ are disjoint and that $D_\pm$ are both precisely invariant under the subgroup $H=\langle \eta_1 \rangle$ of $\Gamma$.

This follows, for example, from work of Bowditch~\cite{Bowditch}, specifically his result that geometrically finite is equivalent to his definition~GF1, which we now recall. By Bowditch's definition GF1, the fundamental domain of a geometrically finite hyperbolic manifold is realised as the union of a compact set and a finite number of disjoint \emph{standard} cusp regions (c.f.\ \cite[Proposition 4.4]{Bowditch} for a proof that geometrically finite hyperbolic manifolds admit standard cusp regions). A standard cusp for $\eta_1$ is modelled as follows.
Consider the universal cover $\HH^3$ of $M$, in the upper half-space model, with boundary $\CC \cup \{\infty\}$. The parabolic $\eta_1$, taking $z$ to $z+1$, acts as translation on horospheres about infinity, taking vertical planes in $\HH^3$ with boundary of the form $\{x\in \CC \mid \mbox{Re}(x)=R\}$, for fixed $R\in \RR$, to vertical planes in $\HH^3$ with boundary $\{x\in \CC \mid \mbox{Re}(x)=R+1\}$. 
There is an $\eta_1$-invariant subspace $P\subset \CC$ with $P/\langle \eta_1 \rangle$ compact; in the 3-dimensional rank-1 case at hand, $P=P(r)$ can be chosen to be an infinite strip bounded by two lines $L(\pm r/2)=\{x\in \CC \mid \mbox{Im}(x) = \pm r/2\}$. See \cite[Figure~3a]{Bowditch}.
Bowditch's definition of a standard cusp implies that for some height $h>0$, the region 
\[C=C(P(r),h)=\{x\in\HH^3 \mid d_{\mbox{euc}}(x,P(r))\geq h\}\]
 must satisfy $\gamma(C)\cap C = \varnothing$ for all $\gamma \in \Gamma\setminus H$. For $r_1$ large, $D_\pm \subset C$, and therefore $\gamma(D_+\cup D_-)\cap (D_+\cup D_-)=\varnothing$ for $\gamma\in \Gamma\setminus H$. Combining this with the fact that $H$ preserves both $D_\pm$ separately, it follows that the $\Gamma$-orbits of $D_\pm$ are disjoint and that both $D_\pm$ are precisely invariant under $H$ in $\Gamma$.

Now consider $f=\gamma_{r_1}$ defined as above. Note that since $\gamma_{r_1}$ and $\eta_1$ commute, $fHf^{-1} =H < \Gamma$. 
The observations above on Bowditch's definition~GF1 imply the following three conditions required for the second Klein-Maskit combination theorem:
\begin{enumerate}
\item $D_+$ is precisely invariant for $H$ in $\Gamma$; 
\item $\CC-\gamma_{r_1}(\overline{D}_+)=D_-$ is precisely invariant for $fHf^{-1} = H$ in $\Gamma$
\item $\gamma(D_+) \cap D_- = \varnothing$ for all $\gamma\in\Gamma$.
\end{enumerate}
Then by the second Klein-Maskit combination theorem, $\Gamma_{r_1}$ is a discrete group and an HNN extension of $\Gamma$. 
\end{proof}

\begin{proof}[Proof of \refthm{CCtoGF}]
Apply \reflem{KleinMaskit} iteratively to all rank one cusps of $\Gamma$; we obtain a Kleinian group $\Gamma_r$, $r=(r_1,\ldots , r_k)$, for $r_{i+1}\gg r_i$, $i=1,\ldots,k-1$. It has $k$ rank two cusps corresponding to the $k$ rank one cusps of $M$, and additionally any rank two cusps inherited from $\Gamma$, but no rank one cusps. It has a presentation of the form
\[ \Gamma_r=\langle G, \gamma_1,\ldots , \gamma_k\,\vert\, R, [\gamma_i,\eta_i]=1,  \forall i=1,\ldots, k\rangle. \]
As $r_1=\min_i r_i$ tends to infinity, these groups converge geometrically to $\Gamma$.

Now perform $(1,n)$-Dehn surgery on the $k$ new rank two cusps of $\Gamma_r$, where the meridian of the  $i$-th cusp (filled for $n=0$) corresponds to the new generator $\gamma_{r_i}$. For $n$ sufficiently large, this yields Kleinian groups $\Gamma_{rn}$ with presentations
\[
\Gamma_{rn}=\langle G, \gamma_1,\ldots , \gamma_k\,\vert\, R, [\gamma_i,\eta_i]=1,\gamma_i\eta_i^n=1,\,  \forall i=1,\ldots, k\rangle.
\]
The groups $\Gamma_{rn}$ are canonically isomorphic to $\Gamma$: There is a natural isomorphism $m_{rn}:\Gamma\rightarrow \Gamma_{rn}$ whose inverse sends $\gamma_i $ to $\eta_i^{-n}$ for all $i=1,\ldots, k.$

Thus the $\Gamma_{rn}$ are images of faithful, geometrically finite representations of $\Gamma$.
Moreover, since the construction of $\Gamma_{rn}$ is via Dehn surgery, for $n$ large, $m_{rn}(\theta)$ for $\theta\in\Gamma$ is parabolic if and only if $\theta$ is part of a rank two cusp of $\Gamma$. In particular, the elements $m_{rn}(\eta_i)$ are hyperbolic and $\Gamma_{rn}$ has no rank one cusps.

These representations converge algebraically to $\Gamma$ as $n\rightarrow \infty$, since Dehn surgery is a perturbation of the identity in terms of representations of the group $\Gamma_r$, thus in particular in terms of the subgroups $\Gamma \subset \Gamma_r$. A suitable formulation of Dehn surgery, due to Comar, can be found in \cite[Theorem~10.1]{AndersonCanaryMcCTopologyDeformation}.
Moreover the Kleinian groups $\Gamma_{rn}$ converge geometrically (i.e.\ in the Chabauty topology) to $\Gamma_r$ as $n\rightarrow \infty$~\cite{thurston1979geometry}.
Thus for each value of $r_i$, we may choose a sequence $r_{i}(j)_{ j\in \mathbb{N}}$ tending to infinity, and consider $r(j)=(r_1(j),\ldots,r_k(j))$ as above. Choosing $n(j)$ sufficiently large, we find that the
diagonal sequence of Kleinian groups $\Gamma_{r(j)n(j)}$, uniformizing the geometrically finite hyperbolic manifolds $M_j$ without rank one cusps, converges both geometrically and algebraically to $\Gamma$, uniformizing $M$.

This implies that the limit $M$ is diffeomorphic to $M_j$ for $j$ sufficiently large, as follows (compare \cite[Lemma~3.6]{AndersonCanaryMcCTopologyDeformation}).
Indeed, the compact core of $M$ embeds via its interpretation as geometric limit back into $M_j$ for $j$ large. This induces a map on fundamental groups $\Gamma\rightarrow \Gamma_{r(j)n(j)}$, which necessarily coincides with the isomorphism $\Gamma\rightarrow \Gamma_{r(j)n(j)}$ establishing that $\Gamma$ is the algebraic limit of $\Gamma_{r(j)n(j)}$.
Thus the compact core of $M$ embeds as a compact core into $M_j$ for $j$ large. By the uniqueness of compact cores and since a diffeomorphism of compact cores can be extended to a diffeomorphism of the ambient hyperbolic manifolds, the claimed result follows.

Finally we remark on the constructive nature of the proof. Observe that the process above is obtained by first, choosing a sufficiently large $r_i$ at each rank one cusp
to build manifolds with rank two cusps. Then perform high Dehn filling. The choice of $r_1$ will depend heavily on $M$, but given a fundamental domain for $M$, these can be determined effectively. By our choice of the $\gamma_{r_i}$, the new rank two cusps of the manifold $\HH^3/\Gamma_r$ are rectangular. Thus the normalised length of the slopes $1/n$ have length at least $\sqrt{n}$. Again applying cone deformation techniques, we may choose effective $n$ sufficiently large to obtain constants required in the definition of geometric convergence, as in the proof of \reflem{UnivCrossing}.
\end{proof}

\begin{corollary}\label{Cor:MainGF}
  Let $M$ be a geometrically finite hyperbolic 3-manifold of infinite volume that is homeomorphic to the interior of a compact manifold $\overline{M}$ with a single boundary component. Then one can construct an explicit sequence of finite volume hyperbolic manifolds that are knot complements in $\mathcal{D}(\overline{M})$ such that $M_n$ converges geometrically to $M$. \qed
\end{corollary}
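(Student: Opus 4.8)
The plan is to assemble the corollary from two ingredients already in hand: the convex cocompact construction of \refthm{ConvergenceDouble} and the removal of rank one cusps in \refthm{CCtoGF}, glued together by a diagonal argument. The enabling observation is topological: under the hypotheses, $M$ has no rank two cusps. A rank two cusp is a finite-volume torus end, contributing a torus boundary component to $\overline M$; were $M$ to have one, then either it would be the only end, forcing $M$ to have finite volume, or $\overline M$ would have at least two boundary components, and both possibilities contradict the hypotheses that $M$ has infinite volume and that $\overline M$ has a single boundary component. Hence the only cusps of $M$ are rank one cusps, which appear as punctures on the conformal boundary rather than as separate ends.

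First I would apply \refthm{CCtoGF} to $M$, obtaining an explicitly constructed sequence $M_j$ of geometrically finite hyperbolic manifolds with no rank one cusps, together with diffeomorphisms $M\to M_j$ realising geometric convergence $M_j\to M$ with effective bounds. As each $M_j$ is diffeomorphic to $M$, it is again homeomorphic to $\text{int}(\overline M)$ and so has no torus end, hence no rank two cusp either. A geometrically finite manifold with neither rank one nor rank two cusps has compact convex core, that is, it is convex cocompact. Thus every $M_j$ is convex cocompact with underlying compact manifold $\overline M$, so all the doubles $\calD(\overline M)$ agree.

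Next, for each fixed $j$ I would feed the convex cocompact manifold $M_j=\text{int}(\overline M)$ into \refthm{ConvergenceDouble}, producing a sequence of finite volume hyperbolic knot complements in $\calD(\overline M)$ converging geometrically to $M_j$, with the filling slopes chosen explicitly through the effective Dehn filling of \reflem{UnivCrossing}. I would then diagonalise: given $n$, use $M_j\to M$ to pick $j(n)$ with $M_{j(n)}$ lying suitably close to $M$, and use the convergence of knot complements to $M_{j(n)}$ to pick a knot complement $N_n$ in $\calD(\overline M)$ lying suitably close to $M_{j(n)}$. Composing the two bilipschitz embeddings of metric balls, exactly as in the factor $(1+\epsilon)^2$ of \refthm{MainDouble}, and sending the tolerances to $0$ and the radii to infinity, shows that $N_n$ converges geometrically to $M$. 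These $N_n$ are the required knot complements in $\calD(\overline M)$.

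The step demanding the most care is the topological bookkeeping above: verifying that the single-end, infinite-volume hypotheses preclude rank two cusps, so that stripping the rank one cusps via \refthm{CCtoGF} lands us directly in the convex cocompact case, on the \emph{same} double $\calD(\overline M)$, where \refthm{ConvergenceDouble} applies verbatim. The remainder is packaging, and it is effective: since both \refthm{CCtoGF} and \refthm{ConvergenceDouble} (via \reflem{UnivCrossing}) carry explicit bounds, the diagonal indices $j(n)$ and the filling slopes can all be chosen effectively, yielding the promised explicit sequence.
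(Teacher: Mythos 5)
Your proposal is correct and follows the route the paper intends: the corollary is stated with its proof omitted precisely because it is the combination of \refthm{CCtoGF} (producing convex cocompact approximants $M_j$ diffeomorphic to $M$, hence with the same double $\calD(\overline M)$) with \refthm{ConvergenceDouble} applied to each $M_j$, followed by a diagonal argument with the effective bounds from \reflem{UnivCrossing}. Your explicit check that the single-end, infinite-volume hypotheses rule out rank two cusps---so that killing the rank one cusps really does land in the convex cocompact case---is exactly the bookkeeping the paper leaves implicit, and it is a worthwhile addition.
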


\bibliographystyle{amsplain}
\bibliography{biblio}

\end{document}